\DeclareMathAlphabet{\varmathbb}{U}{pxsyb}{m}{n}
\def\leq{\leqslant}
\def\geq{\geqslant}
\def\phi{\varphi}
\def\kappa{\varkappa}
\newcommand{\D}{\mathrm{d}\kern0.2pt}%
\newcommand{\E}{\mathrm{e}\kern0.2pt} 
\newcommand{\ii}{\kern0.05em\mathrm{i}\kern0.05em}
\newcommand{\RR}{\mathbb{R}}%
\newcommand{\CC}{\mathbb{C}}%
\newtheorem{theorem}{\bf \indent Theorem}[section]
\newtheorem{proposition}{\bf \indent Proposition}[section]
\newtheorem{corollary}{\bf \indent Corollary}[section]
\theoremstyle{remark} 
\newtheorem{remark}{\bf \indent Remark}[section]
\newtheorem{definition}{\bf\indent Definition}[section] 
\newtheorem{conjecture}{\bf \indent Conjecture}[section]
\numberwithin{equation}{section}
\begin{document}

\renewcommand{\thefootnote}{}
\renewcommand{\footnoterule}{}

\vskip5mm

\noindent {\Large \bf MEAN VALUE PROPERTIES OF HARMONIC FUNCTIONS \\[3pt] AND
RELATED TOPICS (A SURVEY)}

\vskip4mm

{\bf N. Kuznetsov}

\vskip-2pt {\small Laboratory for Mathematical Modelling of Wave Phenomena}
\vskip-4pt {\small Institute for Problems in Mechanical Engineering, RAS} \vskip-4pt
{\small 61, V.O., Bol'shoy pr., St. Petersburg 199178, Russia} \vskip-4pt {\small
nikolay.g.kuznetsov@gmail.com}

\vskip4mm

\parbox{146mm} {\noindent Results involving various mean value properties are
reviewed for harmonic, biharmonic and metaharmonic functions. It is also considered
how the standard mean value property can be weakened to imply harmonicity and
belonging to other classes of functions.}

\vskip7pt

{\centering
\section{Introduction}
}

\noindent It is highly likely that the mean value theorem is the most remarkable and
useful fact about harmonic functions. Results on this and other properties of
harmonic functions were surveyed by Netuka \cite{N} in the remote 1975, but,
unfortunately, there is a number of inaccuracies in this paper. To the best
author's knowledge, only one review in this area had appeared since then; namely,
the extensive article \cite{NV} by Netuka and Vesel\'y which is a substantially
extended version of Netuka's survey updated to mid-1993, but still reproducing some
of the inaccuracies from the previous paper.

During the past 25 years (\cite{NV} was published in 1994), rather many papers on
mean value properties and other related topics have appeared. Some of these contain
results of significant interest; see, for example, \cite{HN,HN1,FM} to list a few. A
number of new as well as some old results deserve to be reviewed, especially,
various forms of converse mean value theorem. Mean value theorems are also
considered for solutions of equations different from the Laplace equation. Moreover,
for several old results, for which only two-dimensional versions were published,
proofs are provided for general formulations or just outlined because they were not
properly presented in \cite{NV}. However, the so-called inverse mean value
properties (see \cite{NV}, Sections~7 and 8) are not treated here, in particular,
because these properties were considered in detail in Zaru's thesis \cite{LZ}
available online. Sections~2 and 3 of her thesis deal with inverse mean value
properties on balls and annuli and strips, respectively. Of course, many references
to the paper \cite{NV} are given, since results reviewed here continue research
initiated before 1993 and described in Netuka and Vesel\'y's article.

In the remaining part of this section, basic classical results about harmonic
functions are presented as the basis for considerations in Sections 2--4, and so
bibliography used here is restricted to a few monographs, textbooks and pioneering
papers; see \cite{N} and \cite{NV} for further references. We begin with the
standard formulation of the mean value theorem (see, for example, the monograph
\cite{GT} by Gilbarg and Trudinger or the textbook \cite{M} by Mikhlin).

\begin{theorem}
Let $D$ be a domain in $\RR^m$, $m \geq 2$. If $u \in C^2 (D)$ satisfies the Laplace
equation $\nabla^2 u = 0$ in $D$, then we have
\begin{equation}
u (x) = \frac{1}{m \, \omega_m r^{m-1}} \int_{\partial B} u \, \D S \label{A}
\end{equation}
for every ball $B = B_r (x)$ such that $\overline B \subset D$.
\end{theorem}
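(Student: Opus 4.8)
The plan is to prove that the spherical average of $u$ over $\partial B_\rho(x)$ does not depend on the radius $\rho$, and then to recover $u(x)$ by letting $\rho \to 0^+$. For $0 < \rho \le r$ introduce the mean
\[
\phi(\rho) = \frac{1}{m\,\omega_m \rho^{m-1}} \int_{\partial B_\rho(x)} u\, \D S
= \frac{1}{m\,\omega_m} \int_{|\xi|=1} u(x+\rho\,\xi)\, \D S(\xi) ,
\]
where the second expression is obtained by the substitution $y = x + \rho\,\xi$ that maps the unit sphere onto $\partial B_\rho(x)$ (so that $\D S = \rho^{m-1}\,\D S(\xi)$). Its advantage is that the domain of integration no longer depends on $\rho$. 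Since $\overline B \subset D$ is compact and $u \in C^2 (D)$, the integrand is continuously differentiable in $\rho$ uniformly in $\xi$, which legitimises differentiation under the integral sign.

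First I would differentiate $\phi$ and return to the sphere $\partial B_\rho(x)$:
\[
\phi'(\rho) = \frac{1}{m\,\omega_m} \int_{|\xi|=1} \nabla u(x+\rho\,\xi)\cdot \xi\, \D S(\xi)
= \frac{1}{m\,\omega_m \rho^{m-1}} \int_{\partial B_\rho(x)} \frac{\partial u}{\partial n}\, \D S ,
\]
the reduction being valid because $\xi$ is precisely the outward unit normal on $\partial B_\rho(x)$ at the point $x + \rho\,\xi$. The key step is now to apply the divergence theorem to the vector field $\nabla u$ on $B_\rho(x)$, which gives
\[
\int_{\partial B_\rho(x)} \frac{\partial u}{\partial n}\, \D S = \int_{B_\rho(x)} \nabla^2 u\, \D y = 0 ,
\]
the last equality holding because $u$ is harmonic in $D \supset \overline{B_\rho(x)}$. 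Hence $\phi'(\rho) = 0$ for every $\rho \in (0, r]$, so $\phi$ is constant on this interval.

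It remains to identify the value of this constant. Letting $\rho \to 0^+$ and using the continuity of $u$ at $x$ (together with the uniform convergence $u(x+\rho\,\xi) \to u(x)$ on the compact unit sphere), one finds $\phi(\rho) \to u(x)$; consequently $\phi(r) = \lim_{\rho \to 0^+} \phi(\rho) = u(x)$, which is exactly \eqref{A}. The only delicate points are the justification of differentiation under the integral sign and the passage to the limit, both of which are routine consequences of the compactness of $\overline B$ and the $C^2$ regularity of $u$; the genuinely essential ingredient is the vanishing of the flux of $\nabla u$ through every such sphere, i.e.\ harmonicity itself.
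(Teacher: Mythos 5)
Your proof is correct: it is precisely the classical argument — constancy of the spherical mean, established by differentiating under the integral sign and applying the divergence theorem to $\nabla u$, followed by letting $\rho \to 0^+$ to identify the constant as $u(x)$. The paper itself gives no proof of this theorem, only citations to Gilbarg--Trudinger and Mikhlin, and your argument coincides with the standard proof found in those references, so there is nothing to correct or compare further.
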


\noindent Here and below the following notation is used:

$\RR^m$ is the Euclidean $n$-space with points $x = (x_1, \dots , x_m)$, $y = (y_1,
\dots , y_m)$, where $x_i, y_i$ are real numbers, whereas the norm is $|x| = (x_1^2
+ \dots + x_m^2)^{1/2}$.

For a set $G \subset \RR^m$, by $\partial G$ we denote its boundary, whereas
$\overline G = G \cup \partial G$ is its closure; $D \subset \RR^m$ is a domain if
it is an open and connected set, not necessarily bounded. In particular, $B_r (x) =
\{ y : |y-x| < r \}$ denotes the open ball of radius $r$ centred at $x$, the volume
of unit ball is $\omega_m = 2 \pi^{m/2} / [m \Gamma (m/2)]$ and $\D S$ is the
surface area measure.

By $\nabla = (\partial_1, \dots , \partial_m)$ the gradient operator is denoted;
here $\partial_i = \partial / \partial x_i$, $\partial_i \partial_j = \partial^2 /
\partial x_i \partial x_j$ etc. Finally, $C^k (D)$ is the set of continuous
functions in $D$, whose derivatives of order $\leq k$ are also continuous there;
functions in $C^k (\overline D)$ are continuous in $\overline D$ with all
derivatives of order $\leq k$.

Since the denominator in \eqref{A} is equal to the area of sphere of radius $r$, it
is common to refer to this equality as the area version of mean value theorem. Also,
it is known as Gauss' theorem of the arithmetic mean; see \cite{K}, p.~223. Indeed,
one finds this theorem in his paper \emph{Algemeine Lehrsatze in Beziehung auf die
im verkehrtem Verhaltnisse des Quadrats der Entfernung Wirkenden Anziehungs- und
Abstos\-sungs-Krafte} published in 1840 (see also \emph{Gauss Werke}, Bd. 5.
S.~197--242); the corresponding quotation from this paper is given in \cite{NV},
p.~361.

The following consequence of Theorem~1.1 is not widely known, but has important
applications in the linear theory of water waves; see the monograph \cite{LWW},
Sect.~4.1, where a proof of this assertion is given, and the article \cite{KM},
where further references can be found.

\begin{corollary}
Zeros of a harmonic function are never isolated.
\end{corollary}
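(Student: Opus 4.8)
The plan is to read off the conclusion directly from the area mean value property \eqref{A}. Suppose $u$ is harmonic in $D$ and $u(x_0)=0$ at some point $x_0\in D$; I want to show that every neighbourhood of $x_0$ contains a zero of $u$ other than $x_0$ itself. First I would fix $r>0$ small enough that $\overline{B_r(x_0)}\subset D$ and apply \eqref{A} to the ball $B=B_r(x_0)$, which gives
\[
\frac{1}{m\,\omega_m r^{m-1}}\int_{\partial B}u\,\D S=u(x_0)=0,
\]
so the mean value of $u$ over the sphere $\partial B$ vanishes.

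Next I would dispose of the trivial case: if $u\equiv 0$ on $\partial B$, then the entire sphere consists of zeros lying within distance $r$ of $x_0$, and the claim is immediate. Otherwise $u$ is continuous and not identically zero on $\partial B$, yet its average there is zero. Consequently $u$ can be neither everywhere positive nor everywhere negative on $\partial B$ (either sign condition would force the integral to be nonzero), and hence it attains a strictly positive value at one point of $\partial B$ and a strictly negative value at another.

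The key step is then to invoke connectedness. Since $m\geq 2$, the sphere $\partial B=\partial B_r(x_0)$ is connected, so its continuous image $u(\partial B)\subset\RR$ is an interval; containing both a positive and a negative number, it must contain $0$. Thus $u$ vanishes at some point of $\partial B$, a point at distance $r$ from $x_0$. Letting $r\to 0$ then produces zeros of $u$ arbitrarily close to $x_0$, so $x_0$ is not an isolated zero.

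I expect the only genuinely delicate point to be the use of connectedness of the sphere, which is precisely where the hypothesis $m\geq 2$ enters: for $m=1$ the ``sphere'' $\partial B$ is a two-point set, the intermediate value argument collapses, and indeed a harmonic (i.e.\ affine) function on an interval can have isolated zeros. Everything else reduces to \eqref{A} combined with the intermediate value theorem for continuous functions on a connected space.
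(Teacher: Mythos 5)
Your proof is correct and follows exactly the route the paper intends: the paper prints no proof of this corollary, presenting it as a consequence of Theorem~1.1 and deferring to \cite{LWW}, Sect.~4.1, and your argument---the vanishing spherical mean forces either identical vanishing of $u$ on $\partial B_r(x_0)$ or a sign change there, whence a zero by connectedness of the sphere for $m \geq 2$---is precisely that standard derivation from \eqref{A}. Your closing remark about the failure for $m=1$ correctly identifies where the hypothesis enters; there is nothing to correct.
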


Harmonic functions also have a mean value property with respect to volume measure;
see, for example, \cite{H}, p.~12. It follows by integrating \eqref{A} with respect
to the polar radius at $x$ from 0 to some $R > 0$, thus yielding.

\begin{theorem}
Let $D$ be a bounded domain in $\RR^m$, $m \geq 2$, and let $B_R (y) \subset D$ be
any open ball such that $R$ is less than or equal to the distance from $y$ to
$\partial D$. If $u$ is a Lebesgue integrable harmonic function in $D$, then we
have
\begin{equation}
u (y) = \frac{1}{\omega_m R^{m}} \int_{B_R (y)} u \, \D x \, , \label{V}
\end{equation}
where the ball's volume stands in the denominator.
\end{theorem}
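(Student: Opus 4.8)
The plan is to deduce the volume mean value property \eqref{V} directly from the area version \eqref{A} of Theorem~1.1, exactly as the text between the two statements suggests. The key observation is that the solid ball $B_R(y)$ can be foliated into concentric spheres $\partial B_r(y)$ with $0 < r < R$, so that an integral over the ball decomposes, via the coarea formula (integration in polar coordinates centred at $y$), into an integral over the radius $r$ of integrals over these spheres.

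First I would write, for the integrable harmonic function $u$,
\begin{equation}
\int_{B_R(y)} u\,\D x = \int_0^R \left( \int_{\partial B_r(y)} u\,\D S \right) \D r . \label{polar}
\end{equation}
Next I would invoke \eqref{A}: since $\overline{B_r(y)} \subset D$ for every $r < R$ (because $R$ does not exceed the distance from $y$ to $\partial D$), the inner spherical integral equals $m\,\omega_m r^{m-1} u(y)$. Substituting this into \eqref{polar} pulls the constant $u(y)$ out of the integration in $r$, leaving
\begin{equation}
\int_{B_R(y)} u\,\D x = m\,\omega_m\, u(y) \int_0^R r^{m-1}\,\D r = \omega_m R^m\, u(y) . \label{rint}
\end{equation}
Dividing by the ball's volume $\omega_m R^m$ then yields \eqref{V}.

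The main technical obstacle is the justification of the radial integration \eqref{polar} together with the passage to the limiting sphere $r = R$. Theorem~1.1 is stated for balls with $\overline B \subset D$, i.e. strictly interior to $D$, whereas here $R$ is merely at most the distance to $\partial D$, so $\overline{B_R(y)}$ may touch $\partial D$; hence I would apply \eqref{A} only for $r < R$ and recover the value at $r = R$ by letting $r \uparrow R$, which is permissible since both sides of \eqref{rint} are continuous in the upper limit. The Lebesgue integrability hypothesis is what guarantees that the iterated integral in \eqref{polar} is finite and that the polar-coordinate (coarea) decomposition is valid, so no issues of absolute convergence arise; a harmonic function is automatically smooth, so the restriction of $u$ to each sphere is continuous and the inner integrals are well defined. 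I expect the algebraic evaluation $\int_0^R m\,\omega_m r^{m-1}\,\D r = \omega_m R^m$ to be entirely routine, the only point meriting care being the boundary case $r = R$ just described.
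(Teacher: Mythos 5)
Your proposal is correct and follows exactly the route the paper indicates: the survey derives \eqref{V} by ``integrating \eqref{A} with respect to the polar radius at $x$ from $0$ to some $R>0$,'' which is precisely your coarea/polar decomposition followed by the evaluation $\int_0^R m\,\omega_m r^{m-1}\,\D r = \omega_m R^m$. Your additional care about the case where $\overline{B_R(y)}$ touches $\partial D$ (applying \eqref{A} only for $r<R$ and using Lebesgue integrability of $u$ to justify the decomposition) is a sound filling-in of details the paper leaves implicit.
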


\noindent In what follows, we denote the right-hand side terms in \eqref{A} and
\eqref{V} by $L (u,x,r)$ and $A (u,x,r)$, respectively (the notation used in
\cite{NV}).

There are many corollaries of Theorem 1.2 and the most important is the maximum
principle (also referred to as the strong maximum principle; see, for example,
\cite{GT}, p.~15).

\begin{theorem}
Let $D$ be a bounded domain. If a harmonic function $u$ attains either a global
minimum or maximum in $D$, then $u$ is constant.
\end{theorem}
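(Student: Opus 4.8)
The plan is to prove the strong maximum principle as a direct consequence of the volume mean value property (Theorem 1.2, equation \eqref{V}).

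**Setting up the approach.** Suppose $u$ is harmonic in the bounded domain $D$ and attains a global maximum $M$ at some point, so the set $E = \{x \in D : u(x) = M\}$ is nonempty. I want to show $E = D$; by connectedness of $D$ it suffices to show $E$ is both relatively closed and relatively open in $D$.

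The proof plan is as follows.

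\begin{proof}
Suppose $u$ attains its global maximum over $D$, equal to $M$, and set
\[
E = \{ x \in D : u(x) = M \}.
\]
By assumption $E \neq \varnothing$. Since $u$ is continuous, $E = u^{-1}(\{M\})$ is relatively closed in $D$. I claim that $E$ is also open. Let $y \in E$ and choose $R > 0$ so small that $\overline{B_R(y)} \subset D$; such $R$ exists because $D$ is open. Applying the volume mean value property \eqref{V} at $y$ gives
\[
0 = u(y) - M = \frac{1}{\omega_m R^{m}} \int_{B_R(y)} \bigl( u(x) - M \bigr)\, \D x .
\]
The integrand $u - M$ is continuous and satisfies $u - M \leq 0$ throughout $D$, since $M$ is the global maximum. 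A nonpositive continuous function with vanishing integral must vanish identically, so $u \equiv M$ on $B_R(y)$, that is $B_R(y) \subset E$. Hence $E$ is open. Being nonempty, relatively closed and open in the connected set $D$, we conclude $E = D$, so $u \equiv M$ is constant. The case of a global minimum follows by applying this argument to $-u$, which is also harmonic.
\end{proof}

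The main obstacle, and the step deserving care, is the justification that a nonpositive continuous function whose integral over $B_R(y)$ vanishes must be identically zero there; this rests on continuity together with the fact that $B_R(y)$ is open, so that any point where $u - M < 0$ would force the integral to be strictly negative. I should also note that Theorem 1.2 requires $R$ not to exceed the distance from $y$ to $\partial D$, which is exactly what shrinking $R$ guarantees, and that the global maximum is attained because no compactness hypothesis beyond the existence of the extremum is actually needed for this argument — the statement simply assumes the extremum is attained in $D$.
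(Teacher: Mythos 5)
Your proof is correct and takes exactly the route the paper intends: the paper states this theorem as a corollary of the volume mean value property (Theorem 1.2) and cites Gilbarg--Trudinger for the proof, which is precisely your open--closed connectedness argument. One cosmetic remark: since you choose $R$ with $\overline{B_R(y)} \subset D$, the mean value identity you invoke also follows directly from Theorem 1.1 by radial integration, so Theorem 1.2's hypothesis that $u$ be Lebesgue integrable over all of $D$ never comes into play.
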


\noindent This theorem implies several global estimates (see \cite{GT}, Ch.~2);
first we formulate those known as the weak maximum principle.

\begin{corollary}
Let $u \in C^2 (D) \cap C^0 (\overline D)$, where $D$ is a bounded domain. If $u$ is
harmonic in $D$, then
\[ \min_{x \in \partial D} u \leq u (x) \leq \max_{x \in \partial D} u \quad for \ all 
\ x \in \overline D ,
\]
where equalities hold only for $u \equiv \mathrm{const}$.
\end{corollary}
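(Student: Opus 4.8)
The plan is to obtain this weak form as a direct consequence of the strong maximum principle (Theorem~1.3), so that no new analytic input is needed. First I would invoke compactness: since $D$ is bounded, $\overline D$ is compact, and because $u \in C^0 (\overline D)$, the extreme value theorem guarantees that $u$ attains both its maximum and its minimum over $\overline D$. Each of these extreme values is attained either at some interior point of $D$ or at some point of $\partial D$.

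Next I would split into the two natural cases. If $u$ is constant on $\overline D$, the asserted inequalities hold trivially with equality throughout, and there is nothing more to prove. If $u$ is not constant, then Theorem~1.3 forbids $u$ from attaining a global maximum (or a global minimum) at any interior point of $D$; hence the maximum of $u$ over $\overline D$ must be achieved on $\partial D$, giving $\max_{\overline D} u = \max_{\partial D} u$, and likewise $\min_{\overline D} u = \min_{\partial D} u$. Since every value $u(x)$ with $x \in \overline D$ lies between the global minimum and the global maximum, the two-sided inequality follows at once. For the lower bound one may equivalently apply the maximum statement of Theorem~1.3 to the harmonic function $-u$.

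Finally, for the equality clause I would argue contrapositively. Suppose that equality $u(x) = \max_{\partial D} u$ holds at some \emph{interior} point $x \in D$. Then the global maximum of $u$ over $\overline D$ is attained at the interior point $x$, so Theorem~1.3 forces $u \equiv \mathrm{const}$; the same reasoning applied to $-u$ handles equality in the lower bound. Thus equality at an interior point can occur only when $u$ is constant, which is the rigidity assertion.

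I do not anticipate a genuine obstacle here, since the real content is already packaged in Theorem~1.3; the only point requiring a little care is the precise reading of the equality statement, namely that at the extremizing boundary points equality is automatic, so the rigidity assertion is meaningful only for interior $x \in D$.
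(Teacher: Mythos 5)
Your proof is correct and follows essentially the same route as the paper, which states this weak maximum principle precisely as a consequence of the strong maximum principle (Theorem~1.3), with the details left to the standard reference; your compactness argument, the non-constant/constant case split, and the reduction of the rigidity clause to an interior attainment of the global maximum are exactly the expected filling-in of that derivation.
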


\noindent Moreover, the derivatives of a harmonic function are estimated in terms
of the function itself.

\begin{corollary}
Let $u$ be harmonic in $D$. If $D'$ is a compact subset of $D$, then
\[ \max_{x \in D'} |\partial_i u| \leq \frac{m}{d} \sup_{x \in D} |u| \quad for 
\ i=1,\dots,m ,
\]
where $d$ is the distance between $D'$ and $\partial D$.
\end{corollary}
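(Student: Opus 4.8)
The plan is to derive a mean value representation for each first derivative $\partial_i u$ and then convert the resulting volume integral into a surface integral on which $u$ can be bounded directly. Fix $x \in D'$; by the definition of $d$ the distance from $x$ to $\partial D$ is at least $d$, so $\overline{B_r(x)} \subset D$ for every $r < d$. Writing the volume mean value identity \eqref{V} for $u$ with the substitution $y = x + z$ as
\[
u(x) = \frac{1}{\omega_m r^m} \int_{B_r(0)} u(x+z) \, \D z ,
\]
I can differentiate under the integral sign (legitimate since $u \in C^2$, so $\partial_i u$ is continuous) and return to the original variable to obtain
\[
\partial_i u(x) = \frac{1}{\omega_m r^m} \int_{B_r(x)} \partial_i u \, \D y .
\]
Equivalently, this is just Theorem~1.2 applied to the function $\partial_i u$, which is itself harmonic because $\nabla^2 (\partial_i u) = \partial_i (\nabla^2 u) = 0$.

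The key step is now to apply the divergence theorem on $B_r(x)$, which turns the integral of $\partial_i u$ into a boundary integral,
\[
\int_{B_r(x)} \partial_i u \, \D y = \int_{\partial B_r(x)} u \, \nu_i \, \D S ,
\]
where $\nu = (\nu_1, \dots, \nu_m)$ denotes the outward unit normal. Since $|\nu_i| \leq 1$ and the area of the sphere $\partial B_r(x)$ equals $m \, \omega_m r^{m-1}$ (the factor appearing in the denominator of \eqref{A}), it follows that
\[
|\partial_i u(x)| \leq \frac{1}{\omega_m r^m} \int_{\partial B_r(x)} |u| \, \D S \leq \frac{m \, \omega_m r^{m-1}}{\omega_m r^m} \, \sup_{D} |u| = \frac{m}{r} \, \sup_{D} |u| .
\]

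Letting $r \to d^-$ yields $|\partial_i u(x)| \leq (m/d) \, \sup_D |u|$, and taking the maximum over the compact set $D'$ gives the stated estimate (the inequality is trivial if $u$ is unbounded on $D$). I expect no serious conceptual obstacle here: the two points that need care are the justification of differentiation under the integral sign, handled by the continuity of the derivatives of the harmonic, hence smooth, function $u$, and the fact that the radius may be pushed up to $d$ uniformly in $x \in D'$, which is precisely what the definition of $d$ as the distance between $D'$ and $\partial D$ provides. The essential idea, gaining the extra power of $r$ in the denominator by passing from a volume to a surface integral, is what produces the factor $m/d$.
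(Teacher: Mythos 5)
Your proof is correct and is exactly the standard argument that the paper itself points to without reproducing (it offers no proof of this corollary, citing instead Gilbarg--Trudinger \cite{GT}, Ch.~2): namely, $\partial_i u$ is again harmonic, the volume mean value identity together with the divergence theorem turns $\partial_i u(x)$ into the surface integral of $u\,\nu_i$ over $\partial B_r(x)$, and the ratio of the sphere's area $m\,\omega_m r^{m-1}$ to the ball's volume $\omega_m r^m$ yields the factor $m/r$, after which $r \to d^-$ and compactness of $D'$ finish the estimate. The two technical points you flag, differentiation under the integral sign (or, equivalently, applying the mean value property directly to the harmonic function $\partial_i u$) and the uniform admissibility of radii $r < d$ for all $x \in D'$, are both handled correctly, so there is nothing to add.
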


In his note \cite{Ko} published in 1906, Koebe announced the following converse of
Theorem~1.1.

\begin{theorem}
Let $D$ be a bounded domain in $\RR^m$, $m \geq 2$. If $u \in C^0 (\overline D)$
satisfies equality \eqref{A} for every ball $B = B_r (x)$ such that $\overline B
\subset D$, then $u$ is harmonic in~$D$.
\end{theorem}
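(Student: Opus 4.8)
The plan is to establish harmonicity in two stages: first promote the bare continuity of $u$ to $C^\infty$ smoothness, and then verify $\nabla^2 u = 0$ by a direct computation that only becomes available once smoothness is in hand. For the regularity stage I would mollify $u$ against a \emph{radial} kernel. Fix $\rho \in C^\infty(\RR)$ supported in $[0,1]$ and normalized by $\int_0^1 m\,\omega_m s^{m-1}\rho(s)\,\D s = 1$, and set $\rho_\epsilon(y) = \epsilon^{-m}\rho(|y|/\epsilon)$, so that $\int_{\RR^m}\rho_\epsilon\,\D y = 1$. On the inner domain $D_\epsilon = \{ x \in D : \mathrm{dist}(x,\partial D) > \epsilon \}$ the convolution $u_\epsilon = u * \rho_\epsilon$ is well defined and lies in $C^\infty(D_\epsilon)$. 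Writing this convolution in polar coordinates about $x$ and invoking hypothesis \eqref{A} on each sphere of radius $s < \epsilon$, the inner spherical integral $\int_{\partial B_s(x)} u\,\D S$ equals $u(x)\,m\,\omega_m s^{m-1}$; the normalization of $\rho$ then forces $u_\epsilon(x) = u(x)$ throughout $D_\epsilon$. Exhausting $D$ by the open sets $D_\epsilon$ as $\epsilon \to 0$, I conclude that $u \in C^\infty(D)$, and in particular $u \in C^2(D)$.

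For the second stage I would exploit the classical tools that are now legitimate. Introduce the spherical mean $M(x,r) = L(u,x,r)$, which by hypothesis equals $u(x)$ for every admissible $r$ and is therefore independent of $r$. Differentiating the representation $M(x,r) = (m\,\omega_m)^{-1}\int_{|\xi|=1} u(x+r\xi)\,\D S(\xi)$ with respect to $r$, moving the derivative inside, and applying the divergence theorem on $B_r(x)$ gives
\[
0 = \frac{\partial M}{\partial r}(x,r) = \frac{1}{m\,\omega_m r^{m-1}}\int_{B_r(x)} \nabla^2 u \, \D y .
\]
Since this holds for every ball with $\overline{B_r(x)} \subset D$ and $\nabla^2 u$ is continuous, the standard shrinking-ball argument yields $\nabla^2 u(x) = 0$ at each $x \in D$, which is exactly the assertion.

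The hard part is the regularity stage: without it one cannot even write $\nabla^2 u$, and the divergence-theorem identity of the second stage is meaningless. The delicate points in the mollification argument are the legitimacy of the polar-coordinate reduction and the use of \eqref{A} under only the continuity assumption on $u$; once the identity $u_\epsilon \equiv u$ on $D_\epsilon$ is secured, all smoothness is automatic and the remainder is routine. An alternative route that avoids mollification is to compare $u$ on each sub-ball $B \subset\subset D$ with the Poisson integral $h$ of its boundary values: the difference $u - h$ again satisfies \eqref{A} on every ball contained in $B$ and vanishes on $\partial B$, so a maximum principle for mean-value functions forces $u \equiv h$, whence $u$ is harmonic on $B$ and hence on all of $D$. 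In that variant the crux instead shifts to proving the maximum principle directly from \eqref{A}, via an open–closed connectedness argument on the set where the maximum is attained.
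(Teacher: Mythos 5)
Your proof is correct and follows essentially the route the paper itself endorses: the paper gives no self-contained proof of this theorem but points to Mikhlin's mollifier argument (Theorem~11.7.2 in \cite{M}) as the modern replacement for Kellogg's cumbersome calculations, and your two stages --- the radial-kernel convolution identity $u_\epsilon \equiv u$ on $D_\epsilon$ yielding $u \in C^\infty(D)$, followed by differentiating the constant spherical mean and applying the divergence theorem --- are exactly that argument. Your alternative sketch via the Poisson integral and a maximum principle derived from \eqref{A} is likewise the paper's other cited proof (Brelot \cite{Br}, Appendix, Sect.~18), so both of your routes match known approaches referenced in the text.
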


\noindent There is a slightly stronger version of this theorem requiring that
\eqref{A} holds not at every $x \in D$, but only for some sequence $r_k (x) \to 0$
as $k \to \infty$. Of several proofs of this theorem, we mention two. In his
classical book \cite{K}, pp.~224--226, Kellogg used straightforward but cumbersome
calculations for establishing that $u \in C^2 (D)$. In the Mikhlin's textbook
\cite{M}, these calculations are replaced by application of mollifier technique (see
the proof of Theorem~11.7.2 in \cite{M}). Moreover, the latter book contains the
following important consequence of Corollary~1.2 and Theorem~1.4 (see \cite{M},
Theorem~11.9.1).

\begin{corollary}[Harnack's convergence theorem]
Let a sequence $\{u_k\}$ consist~of $C^0 (\overline D)$-functions harmonic in $D$.
If $u_k \to u$ uniformly on $\partial D$ as $k \to \infty$, then $u_k \to u$
uniformly on $\overline D$ and $u$ is harmonic in $D$.
\end{corollary}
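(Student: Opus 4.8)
The plan is to establish the two assertions separately: first that $\{u_k\}$ converges uniformly on all of $\overline D$ (not merely on $\partial D$), and then that the resulting limit is harmonic. For the first part the natural device is to control the oscillation of each $u_k$ inside $D$ by its oscillation on $\partial D$, which is exactly what the weak maximum principle supplies; for the second part the mean value identity passes to the limit and Koebe's converse theorem (Theorem~1.4) finishes the argument. Throughout I assume $D$ is bounded, as is required for both Corollary~1.2 and Theorem~1.4 to apply.

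First I would fix indices $k$ and $l$ and note that $u_k - u_l$ is harmonic in $D$ and belongs to $C^0(\overline D)$. Since $\overline D$ is compact and, by Corollary~1.2, $|u_k(x) - u_l(x)| \leq \max_{y \in \partial D} |u_k(y) - u_l(y)|$ for all $x \in \overline D$, one obtains
\[
\max_{x \in \overline D} |u_k(x) - u_l(x)| = \max_{x \in \partial D} |u_k(x) - u_l(x)| .
\]
The hypothesis that $u_k \to u$ uniformly on $\partial D$ makes the right-hand side tend to $0$ as $k, l \to \infty$, so $\{u_k\}$ is uniformly Cauchy on $\overline D$. Being a uniform limit of continuous functions, the limit lies in $C^0(\overline D)$; denoting it again by $u$, this extends the boundary function to all of $\overline D$ and yields $u_k \to u$ uniformly on $\overline D$.

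It remains to show that $u$ is harmonic in $D$. Each $u_k$ satisfies the mean value identity \eqref{A}, that is, $u_k(x) = L(u_k, x, r)$ for every ball $B = B_r(x)$ with $\overline B \subset D$. Fixing such a ball, the uniform convergence $u_k \to u$ on $\overline B \subset \overline D$ permits interchanging the limit with the integral over $\partial B$, so letting $k \to \infty$ gives $u(x) = L(u, x, r)$. Thus $u \in C^0(\overline D)$ satisfies \eqref{A} for every admissible ball, and Theorem~1.4 forces $u$ to be harmonic in $D$.

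The only step that requires genuine care is the interchange of limit and integration in the last paragraph, and this is precisely where the uniform convergence on $\overline D$ established in the first part is indispensable: the sphere $\partial B$ is a compact subset of $\overline D$ on which $u_k \to u$ uniformly, so the integrands converge uniformly and the passage to the limit is legitimate. No advance regularity of $u$ need be assumed, since harmonicity---and hence smoothness---is delivered entirely by Koebe's converse theorem.
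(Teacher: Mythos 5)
Your proof is correct and follows exactly the route the paper intends: it presents this corollary as a consequence of Corollary~1.2 (the weak maximum principle, applied to the differences $u_k - u_l$ to get uniform convergence on $\overline D$) and Theorem~1.4 (Koebe's converse, applied after passing to the limit in the mean value identity). Nothing essential differs from the paper's argument, and your explicit remarks on boundedness of $D$ and the legitimacy of interchanging limit and integral are appropriate.
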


\noindent Various other results about convergence of harmonic functions can be found
in Brelot's lectures \cite{Br} (Appendix, Sect.~19) along with an elegant proof of
Theorem~1.4 based on Poisson's formula and the maximum principle (Appendix,
Sect.~18).

It is worth mentioning that two early versions of converse to Theorem~1.1 were
published by different authors under the same title; see \cite{L} and \cite{T}. In
the first of these, E. E. Levi independently proved the two-dimensional version of
Theorem~1.4. (However, this mathematician is more widely known for his paper of
1907, in which a fundamental solution to a general elliptic equation of second order
with variable coefficients is constructed; see the monograph \cite{Mi} by Miranda.)
In the second paper, Tonelli relaxed the assumptions imposed by Levi; namely, $u$ is
required to be Lebesgue integrable on $D$. Further historical remarks and
characterizations of harmonic functions analogous to Theorem~1.4, but expressed in
terms of equality \eqref{V} instead of \eqref{A}, can be found in \cite{NV},
pp.~363--364.

In view of Theorem 1.4, the validity of equality \eqref{A} for every ball $B = B_r
(x)$ such that $\overline B \subset D$ can be taken as the definition of harmonicity
for functions locally integrable in a domain $D$. Another illustration (due to
Uspenskii \cite{U}) that this definition is reasonable is not so well-known. In this
two-dimensional considerations, the circumference centred at $x$ is denoted by $C$
instead of $\partial B$ and $L (u,x,C)$ stands for the mean value of $u$ over $C$.

First, we notice that $L (u,x,C)$ has the following properties:

(i) $L (u,x,C) \geq 0$ provided $u \geq 0$ on $C$;

(ii) $L (u_1 + u_2,x,C) = L(u_1,x,C) + L (u_2,x,C)$ for $u_1$ and $u_2$ given on
$C$;

(iii) $L (\alpha u,x,C) = \alpha L (u,x,C)$ for all $\alpha \in \RR$;

(iv) $L (u,x,C) = 1$ provided $u \equiv 1$ on $C$;

(v) $L (u,x,C) = L (u_*,x,C_*)$ when two circumferences $C$ and $C_*$ ($u$ and $u_*$
are given on the respective curve) are congruent in such a way that the functions'
values are equal at the corresponding points.

\noindent Some of these relations are obvious and the others are straightforward to
verify.

It occurs that (i)--(v) provide an axiomatic definition of the mean value over $C$
for the class of integrable functions. Indeed, such a definition is equivalent to
\eqref{A} and all facts that follow from the latter formula can be proved on the
basis of (i)--(v). In particular, the main result of \cite{U} is derivation of
Corollary~2 from these relations in the case of a disc. Also, an interesting
representation in geometric terms is found for the function solving the Dirichlet
problem in a disc when a step function is given on its boundary.

Of course, the assertion analogous to Theorem 1.4 with formula \eqref{V} replacing
\eqref{A} is true as well as some weaker formulations; see \cite{ABR}, pp.~17--18
and \cite{NV},~p.~364.

Another obvious consequence of harmonicity is the following.

\begin{proposition}[Zero flux property]
Let $u$ be harmonic in a domain $D \subset \RR^m$, $m \geq 2$. If $D'$ is a bounded
subdomain such that $\overline{D'} \subset D$ and $\partial D'$ is piecewise smooth,
then\\[-3mm]
\begin{equation}
\int_{\partial D'} \frac{\partial u}{\partial n} \, \D S = 0 \, . \label{D}
\end{equation}
\end{proposition}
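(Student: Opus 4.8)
The plan is to deduce \eqref{D} directly from the divergence theorem. The key observation is that the normal derivative $\partial u / \partial n = \nabla u \cdot n$, where $n$ is the outward unit normal on $\partial D'$, so the integrand is exactly the quantity appearing in the flux form of the divergence (Gauss--Ostrogradskii) theorem. Applied to the vector field $\nabla u$ over the subdomain $D'$, that theorem gives
\begin{equation}
\int_{\partial D'} \frac{\partial u}{\partial n} \, \D S = \int_{\partial D'} \nabla u \cdot n \, \D S = \int_{D'} \nabla \cdot (\nabla u) \, \D x = \int_{D'} \nabla^2 u \, \D x \, . \label{Dproof}
\end{equation}
Since $u$ is harmonic in $D$, we have $\nabla^2 u = 0$ throughout $D$, and in particular on $D'$; hence the last integral vanishes, which establishes \eqref{D}.

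First I would verify that the hypotheses legitimately permit the divergence theorem. The assumption $\overline{D'} \subset D$ guarantees that $u$ is $C^2$ (indeed real-analytic) on a neighbourhood of $\overline{D'}$, so $\nabla u$ is continuously differentiable up to the boundary, and the assumption that $\partial D'$ is piecewise smooth provides exactly the boundary regularity under which the divergence theorem is classically valid. These two conditions together are precisely what is needed to justify the middle equality in \eqref{Dproof}.

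The only point requiring any care is the technical justification of the divergence theorem for a domain whose boundary is merely piecewise smooth rather than globally $C^1$. The standard remedy is to observe that a piecewise smooth boundary can be handled by decomposing $\partial D'$ into finitely many smooth pieces, or by approximating $D'$ from inside by domains with smooth boundaries and passing to the limit using the uniform continuity of $\nabla u$ on the compact set $\overline{D'}$; the edges and corners form a set of surface measure zero and contribute nothing. I expect this to be the main (and essentially the only) obstacle, but it is entirely routine and may be cited rather than carried out in detail.
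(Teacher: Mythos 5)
Your proof is correct and follows the same route the paper intends: the paper presents \eqref{D} as an immediate consequence of the divergence theorem (equivalently, Green's first formula \eqref{G} with $v \equiv 1$, as noted in Section~2.1), which is exactly your argument of applying the Gauss--Ostrogradskii theorem to $\nabla u$ over $D'$. Your additional remarks on the regularity of $u$ near $\overline{D'}$ and on piecewise smooth boundaries are sound and only make explicit what the paper leaves implicit.
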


\noindent Here and below $n$ denotes the exterior normal to smooth (of the class
$C^1$) parts of domains' boundaries. The name of this assertion has its origin in
the hydrodynamic interpretation of harmonic functions as velocity potentials
describing irrotational motions of an inviscid fluid in $D \subset \RR^m$, $m =
2,3,$ in the absence of sources in which case the influx is equal to outflux for
every~$D' \subset D$.

\begin{remark}
If $u$ is harmonic in a domain $D \subset \RR^m$, $m \geq 2$, then $u$ is analytic
in $D$ (see \cite{GT}, p.~18), and so $u \in C^\infty (\overline{B})$ for any closed
ball $\overline{B} \subset D$. Moreover, for every $k \geq 1$ we have\\[-3mm]
\begin{equation}
\int_{\partial B} \frac{\partial^k u}{\partial n^k} \, \D S = 0 \, . \label{Dk}
\end{equation}
This follows from Theorem 1 proved in \cite{Ka}, p.~171, and generalizes \eqref{D}
for $D'=B$.
\end{remark}

In 1906, B\^ocher \cite{Bo} and Koebe \cite{Ko} independently discovered the
classical converse to this proposition in two and three dimensions, respectively;
its $n$-dimensional version is as follows.

\begin{theorem}[B\^ocher and Koebe]
Let $D$ be a bounded domain in $\RR^m$, $m \geq 2$. Then $u$ belonging to $C^0
(\overline D) \cap C^1 (D)$ is harmonic in $D$ provided it satisfies the
equality\\[-3mm]
\begin{equation}
\int_{\partial B} \frac{\partial u}{\partial n} \, \D S = 0 \ \ \mbox{for every ball
$B$ such that} \ \overline B \subset D . \label{BK}
\end{equation}
\end{theorem}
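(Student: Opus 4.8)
The plan is to establish harmonicity via the converse mean value theorem (Theorem 1.4), by showing that the zero flux condition \eqref{BK} forces the spherical mean $L(u,x,r)$ to be independent of $r$, hence equal to its limit $u(x)$ as $r\to 0$, which is exactly \eqref{A}. The bridge between flux and spherical means is the classical differentiation formula for the mean over a sphere.

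First I would recall the key identity: for $u\in C^1(D)$ and any ball $B_r(x)$ with $\overline{B_r(x)}\subset D$, one has
\[
\frac{\D}{\D r}\,L(u,x,r)=\frac{1}{m\,\omega_m r^{m-1}}\int_{\partial B_r(x)}\frac{\partial u}{\partial n}\,\D S .
\]
This follows by rescaling the sphere to the unit sphere, differentiating under the integral sign, and changing back. Because $u\in C^1(D)$, the integrand is continuous and the differentiation is legitimate. Assumption \eqref{BK} says that the integral on the right vanishes for every admissible ball, so $\partial_r L(u,x,r)=0$ on the whole interval of radii $r$ for which $\overline{B_r(x)}\subset D$; thus $L(u,x,r)$ is constant in $r$ for each fixed $x$. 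Since $u\in C^0(\overline D)$, letting $r\to 0^+$ gives $L(u,x,r)\to u(x)$ by continuity, so the constant value is $u(x)$, i.e.\ \eqref{A} holds for every such ball. An application of Theorem~1.4 then yields harmonicity in $D$.

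The main obstacle is justifying the differentiation formula under the stated minimal regularity $u\in C^1(D)$, since the naive derivation invokes the divergence theorem, which would seem to require that $u$ be $C^2$ up to the boundary of $B_r(x)$ and have an integrable Laplacian. The cleaner route, which avoids second derivatives entirely, is to write $L(u,x,r)=\frac{1}{m\,\omega_m}\int_{|\xi|=1}u(x+r\xi)\,\D S(\xi)$, where the domain of integration no longer depends on $r$; then differentiating under the integral sign gives $\partial_r L=\frac{1}{m\,\omega_m}\int_{|\xi|=1}\nabla u(x+r\xi)\cdot\xi\,\D S(\xi)$, and since $\xi$ is the outward unit normal, rescaling back produces precisely the flux integral divided by $m\,\omega_m r^{m-1}$. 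This manipulation needs only $u\in C^1$, and the continuity of $\nabla u$ legitimizes passing the derivative inside the integral over the fixed unit sphere.

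A secondary point to address is connectedness and the range of admissible radii. For a fixed $x\in D$ the set of radii $r$ with $\overline{B_r(x)}\subset D$ is an interval $(0,\rho(x))$, so constancy of $L(u,x,\cdot)$ on this interval together with the limit at $r=0^+$ suffices to recover \eqref{A} for each individual ball, with no global topological input required. Finally, one should note that the hypothesis $u\in C^0(\overline D)$ is used only to guarantee the limit relation $L(u,x,r)\to u(x)$; continuity of $u$ at the single point $x$ already does this, so the boundary regularity is not essential for the argument but is retained to match the statement of Theorem~1.4.
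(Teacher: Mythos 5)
Your proposal is correct and follows essentially the same route the paper indicates: deriving the mean value equality \eqref{A} from the zero flux condition \eqref{BK} (via the identity $\partial_r L(u,x,r)$ equals the normalized flux integral, which needs only $u\in C^1(D)$) and then invoking Theorem~1.4, exactly the strategy the paper attributes to Kellogg's proof. The details you supply (differentiating the rescaled integral over the unit sphere, constancy of $L(u,x,\cdot)$ on $(0,\rho(x))$, and the limit $L(u,x,r)\to u(x)$) are the standard and correct way to fill in that sketch.
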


\noindent In three dimensions (which does not restrict generality), this theorem is
proved in \cite{K}, pp.~227--228, by deriving equality \eqref{A} from \eqref{BK},
which allows us to apply Theorem~1.4. Along with the latter assertion, the
B\^ocher--Koebe theorem characterizes harmonic functions, but, undeservedly, this
fact is not so widely known now. Indeed, the corresponding references are just
mentioned in three lines in the extensive survey \cite{NV}. Several generalizations
of Theorem~1.5 are described below in Sect.~2.

Another characterization of harmonicity in the two-dimensional case was obtained by
Blasch\-ke in 1916. In the brief note \cite{Bla}, he demonstrated that the property,
now referred to as the asymptotic behaviour of the mean value, is sufficient. The
general form of his assertion is as follows.

\begin{theorem}
Let $D$ be a domain in $\RR^m$, $m \geq 2$. If for $u \in C^0 (D)$ the
equality\\[-3mm]
\begin{equation}
\lim_{r \to +0} r^{-2} [ L (u,x,r) - u (x) ]  = 0 \label{Bl}
\end{equation}
holds for every $x \in D$, then $u$ is harmonic in~$D$.
\end{theorem}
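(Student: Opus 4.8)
The plan is to reduce the claim, ball by ball, to a comparison with the Poisson solution of the Dirichlet problem, and to use the perturbation $\epsilon\,|y-x_0|^2$ to upgrade the degenerate limit \eqref{Bl} into a \emph{strict} sub-mean inequality, to which a maximum-principle argument applies. First I would fix an arbitrary ball $B = B_R (x_0)$ with $\overline B \subset D$ and let $h$ be the harmonic function in $B$, continuous on $\overline B$, with $h = u$ on $\partial B$; such $h$ is furnished by Poisson's integral, whose boundary values agree with the continuous datum $u$. Setting $v = u - h \in C^0 (\overline B)$, one has $v = 0$ on $\partial B$, and, since $h$ is harmonic, Theorem~1.1 gives $L (h,x,r) = h (x)$ identically; hence $L (v,x,r) - v (x) = L (u,x,r) - u (x)$, so $v$ inherits the asymptotic relation \eqref{Bl} at every $x \in B$. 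It then remains to prove $v \equiv 0$, which shows $u = h$ is harmonic in $B$, and, $B$ being arbitrary, harmonic in $D$.

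Next, for $\epsilon > 0$ I would put $w_\epsilon = v + \epsilon\, g$, where $g (y) = |y - x_0|^2$. A direct computation on the sphere $\partial B_r (x)$, parametrised by $y = x + r \xi$ with $|\xi| = 1$, gives $L (g,x,r) = g (x) + r^2$, since $|y - x_0|^2 = |x - x_0|^2 + 2 r (x - x_0) \cdot \xi + r^2$ and the linear term in $\xi$ averages to zero over the sphere. Combined with \eqref{Bl} for $v$, this yields $r^{-2} [ L (w_\epsilon,x,r) - w_\epsilon (x) ] \to \epsilon > 0$ as $r \to +0$. Consequently, at every $x \in B$ there is $r_0 (x) > 0$ such that $L (w_\epsilon,x,r) > w_\epsilon (x)$ for $0 < r < r_0 (x)$: a strict local sub-mean inequality.

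The crux is the ensuing maximum-principle argument, which I would carry out directly. If $w_\epsilon$ attained its maximum over $\overline B$ at an interior point $\bar x$, then $w_\epsilon \leq w_\epsilon (\bar x)$ on each small sphere $\partial B_r (\bar x)$, forcing $L (w_\epsilon, \bar x, r) \leq w_\epsilon (\bar x)$ and contradicting the strict inequality just established (note that the hypothesis is used only at the single point $\bar x$, so its pointwise form suffices). Hence the maximum is attained on $\partial B$, where $w_\epsilon = \epsilon R^2$, so $v (y) \leq w_\epsilon (y) \leq \epsilon R^2$ throughout $\overline B$; letting $\epsilon \to +0$ gives $v \leq 0$. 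Applying the same reasoning to $-v$ (equivalently, to $v - \epsilon\, g$ together with the minimum) gives $v \geq 0$, whence $v \equiv 0$.

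The step I expect to be the main obstacle is precisely this passage to a maximum principle: hypothesis \eqref{Bl} delivers only an equality in the limit, not an inequality, so no comparison argument applies to $v$ itself, and the mere continuity of $u$ rules out differentiating the classical expansion $L (u,x,r) = u (x) + (r^2 / 2m)\, \nabla^2 u (x) + o (r^2)$, which would need $u \in C^2$. The device of adding $\epsilon\, g$, whose spherical mean exceeds its value by \emph{exactly} $r^2$, is what converts the degenerate limit into the strict sub-mean inequality that drives the whole comparison.
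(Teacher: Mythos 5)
Your proof is correct and complete, but there is nothing in the paper to compare it against: being a survey, the paper states this theorem as the general form of Blaschke's 1916 result, citing \cite{Bla} without giving any proof (its only use later is as the engine behind the proof of Saks's Theorem~2.3). What you have written is the classical comparison argument, and every step checks out. The Poisson integral supplies $h$ harmonic in $B$ and continuous on $\overline B$, so $v = u - h$ is continuous on $\overline B$, vanishes on $\partial B$, and inherits hypothesis \eqref{Bl} by linearity of $L$ together with $L(h,x,r) = h(x)$; the identity $L (g,x,r) = g (x) + r^2$ for $g (y) = |y - x_0|^2$ is exact in every dimension (the linear term averages to zero by symmetry), so $w_\epsilon = v + \epsilon g$ satisfies a strict sub-mean inequality at small radii; an interior maximum of the continuous function $w_\epsilon$ would contradict that inequality at that single point, so the maximum $\epsilon R^2$ sits on $\partial B$, giving $v \leq \epsilon R^2$ throughout $\overline B$, and letting $\epsilon \to 0$ and repeating with $-v$ (which also satisfies \eqref{Bl}) yields $v \equiv 0$, hence $u = h$ is harmonic in $B$ and, $B$ being arbitrary, in all of $D$. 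Your closing observation is also exactly the right diagnosis: \eqref{Bl} is only a degenerate asymptotic equality, unusable for any direct comparison, and the quadratic perturbation is precisely the device that converts it into a one-sided strict inequality that can be contradicted at a single interior extremum without any differentiability of $u$.
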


Now we turn to the Kellogg's paper \cite{K1} published in 1934 and opening the line
of works in which the so-called restricted mean value properties are involved. Much
later this notion was defined as follows (see, for example, \cite{HD}).

\begin{definition}
A real-valued function $f$ defined on an open $G \subset \RR^m$ is said to have the
restricted mean value property with respect to balls (spheres) if for each $x \in G$
there exists a ball (sphere) centred at $x$ of radius $r (x)$ such that $B_{r (x)}
(x) \subset G$ and the average of $f$ over this ball (its boundary) is equal to~$f
(x)$.
\end{definition}

\noindent Then Kellogg's result takes the following form.

\begin{theorem}
Let $D$ be a bounded domain in $\RR^m$, $m \geq 2$. If $u \in C^0 (\overline D)$
has~the restricted mean value property with respect to spheres, then $u$ is harmonic
in~$D$.
\end{theorem}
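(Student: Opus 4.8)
The plan is to reduce the problem to the case of vanishing boundary values and then to run a maximum-principle argument driven by the restricted mean value property. First I would let $h$ denote the solution of the Dirichlet problem for the Laplace equation in $D$ with boundary data $u|_{\partial D}$; thus $h$ is harmonic in $D$, belongs to $C^0 (\overline D)$, and satisfies $h = u$ on $\partial D$. Since $h$ is harmonic it obeys the mean value property of Theorem~1.1, so $L (h,x,r) = h (x)$ for every admissible radius, in particular for the radius $r (x)$ furnished by the restricted mean value property of $u$. Setting $v = u - h$ and using the linearity of the spherical mean $L (\,\cdot\,,x,r)$, I obtain $L (v,x,r (x)) = L (u,x,r (x)) - L (h,x,r (x)) = u (x) - h (x) = v (x)$ for every $x \in D$. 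Hence $v \in C^0 (\overline D)$ inherits the restricted mean value property with respect to spheres, with the very same radii $r (x)$, and vanishes on $\partial D$. It suffices to prove that $v \equiv 0$, for then $u = h$ is harmonic.

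Next I would show $v \equiv 0$ by ruling out a positive maximum. Suppose, for contradiction, that $M := \max_{\overline D} v > 0$. Because $v = 0$ on $\partial D$, the set $E = \{ x \in \overline D : v (x) = M \}$ is a non-empty compact subset of the open domain $D$, so the continuous positive function $x \mapsto \mathrm{dist} (x, \partial D)$ attains on $E$ a minimum $\delta > 0$ at some point $x_0$. At $x_0$ the restricted mean value property gives $L (v,x_0,r (x_0)) = v (x_0) = M$ for a radius $r (x_0) \leq \delta$ (since $B_{r (x_0)} (x_0) \subset D$ forces $r (x_0) \leq \mathrm{dist} (x_0, \partial D)$); because $v \leq M$ on the sphere $\partial B_{r (x_0)} (x_0)$, while the integrand $M - v$ is continuous, non-negative and has zero mean there, it follows that $v \equiv M$ on that whole sphere, that is, the sphere lies in $E$. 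Choosing a boundary point $z$ with $|x_0 - z| = \delta$ and moving from $x_0$ towards $z$ a distance $r (x_0)$, I reach a point $y_0$ of the sphere with $\mathrm{dist} (y_0, \partial D) \leq \delta - r (x_0) < \delta$ (and $y_0 \in \partial D$ if $r (x_0) = \delta$). In either case $y_0 \in E$ contradicts the minimality of $\delta$ on $E$ (or the inclusion $E \subset D$). Therefore $M \leq 0$. Applying the same argument to $-v$, which likewise has the restricted mean value property and vanishes on $\partial D$, yields $\min_{\overline D} v \geq 0$, whence $v \equiv 0$ and $u = h$ is harmonic in $D$.

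The main obstacle I anticipate is not the propagation argument but the very first step: guaranteeing a harmonic comparison function $h \in C^0 (\overline D)$ that attains the boundary values $u|_{\partial D}$ at \emph{every} point of $\partial D$. For a general bounded domain the Perron solution is harmonic and bounded but need not be continuous up to irregular boundary points, so that $v$ could fail to vanish on all of $\partial D$ and the maximum could leak to the boundary; one then has to restrict to domains all of whose boundary points are regular (for instance satisfying an exterior cone condition), or to recover the general case by exhausting $D$ from inside while controlling the boundary contribution. A secondary point requiring care is the passage, via continuity, from ``the spherical mean of $v$ equals its maximum'' to ``$v$ is identically equal to that maximum on the whole sphere'', and this is precisely where the hypothesis $u \in C^0 (\overline D)$ is genuinely used.
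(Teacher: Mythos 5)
Your propagation argument (the maximum principle for functions possessing the restricted mean value property, proved by taking a maximizer nearest to $\partial D$ and spreading the maximal value along the given sphere) is correct, and it coincides with the argument used in the paper. Nevertheless, as a proof of the stated theorem the proposal has a genuine gap, and it is exactly the one you flagged at the end. The theorem is Kellogg's Theorem~1.7, asserted for \emph{every} bounded domain $D \subset \RR^m$, whereas your very first step requires a harmonic function $h \in C^0 (\overline D)$ with $h = u$ on $\partial D$, i.e.\ solvability of the Dirichlet problem in $C^0 (\overline D)$ for arbitrary continuous boundary data. This fails for non-regular bounded domains: for the punctured ball $D = B_1 (\mathbf{0}) \setminus \{ \mathbf{0} \}$, whose boundary contains the isolated point $\mathbf{0}$, boundary data with $f (\mathbf{0}) = 1$ and $f = 0$ on the unit sphere admit no such $h$, since a bounded harmonic function in $D$ extends harmonically across $\mathbf{0}$ and the extension, vanishing on the unit sphere, is identically zero. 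Your two fallback suggestions do not close this gap: assuming every boundary point is regular proves a strictly weaker theorem, and exhausting $D$ from inside fails because the restricted mean value property does not localize --- the radius $r (x)$ is prescribed by hypothesis, the sphere $\partial B_{r (x)} (x)$ may leave any compactly contained subdomain, and you are not free to replace it by a smaller admissible one.

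In fact, what you have proved is precisely Theorem~3.1 of the paper, in which solvability of the Dirichlet problem is an explicit additional hypothesis; the paper's proof of that theorem is the same as yours (establish $\max_{\overline D} u = \max_{\partial D} u$ by the nearest-point argument, then apply this to $\pm (u - u_0)$, where $u_0$ is the Dirichlet solution with data $u|_{\partial D}$), and the paper credits its two-dimensional version to Burckel \cite{B}. Theorem~1.7 itself is stated in the paper without proof, with reference to Kellogg \cite{K1}, and the paper explicitly warns at the beginning of Sect.~3 that neither the proof of Theorem~1.7 nor the proofs of related results are trivial; for general bounded domains the known arguments require substantially deeper tools (cf.\ the Hansen--Nadirashvili results \cite{HN} discussed in Sect.~3.3). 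So your argument is a correct proof of the restricted-domain version, but not of the theorem as stated.
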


\noindent Further applications of restricted mean value properties are considered in
Sect.~3.

An immediate consequence of formulae \eqref{A} and \eqref{V} is the following
assertion. {\it Let $D$ be a domain in $\RR^m$, $m \geq 2$. If $u$ is harmonic in
$D$, then the equality\\[-3mm]
\begin{equation}
L (u,x,r) = A (u,x,r) \label{L=A}
\end{equation}
holds for all $x \in D$ and all $r > 0$ such that} $B_r (x) \subset D$. In the
two-dimensional case, its converse was obtained by Beckenbach and Reade \cite{BR} in
1943; their simple proof (worth reproducing here) is valid for any $m \geq 2$. The
general formulation is as follows.

\begin{theorem}
Let $D \subset \RR^m$, $m \geq 2$, be a bounded domain. If equality \eqref{L=A}
holds for $u \in C^0 (D)$, all $x \in D$ and all $r$ such that $B_r (x) \subset D$,
then $u$ is harmonic in~$D$.
\end{theorem}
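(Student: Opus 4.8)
The plan is to reduce the hypothesis \eqref{L=A} to the classical mean value equality \eqref{A} and then invoke Theorem~1.4. First I would fix $x \in D$ and write the volume average in polar coordinates about $x$. Decomposing $B_r(x)$ into concentric spheres and using that $\partial B_\rho(x)$ has surface area $m\,\omega_m\rho^{m-1}$, one obtains
\[
A(u,x,r) = \frac{1}{\omega_m r^m}\int_0^r\!\Big(\int_{\partial B_\rho(x)} u\,\D S\Big)\D\rho = \frac{m}{r^m}\int_0^r \rho^{m-1} L(u,x,\rho)\,\D\rho .
\]
Writing $\ell(\rho)=L(u,x,\rho)$ for brevity, the assumed equality $L(u,x,r)=A(u,x,r)$ therefore reads
\[
r^m \ell(r) = m\int_0^r \rho^{m-1}\ell(\rho)\,\D\rho
\]
for every $r$ with $B_r(x)\subset D$.

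Next I would extract a differential consequence of this integral identity. Since $u\in C^0(D)$, the spherical mean $\ell$ is continuous on the interval $(0,d)$, where $d$ is the distance from $x$ to $\partial D$; hence the right-hand side is a continuously differentiable function of $r$, and so is the left-hand side. Dividing by $r^m$ then shows $\ell\in C^1$ on $(0,d)$, so differentiation is legitimate. Differentiating both sides and cancelling the common term $m\,r^{m-1}\ell(r)$ leaves $r^m\ell'(r)=0$, whence $\ell'(r)\equiv 0$. Thus $\ell(r)$ is independent of $r$.

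Finally, letting $r\to +0$ and using the continuity of $u$ at $x$, the spherical mean $L(u,x,r)$ tends to $u(x)$; consequently the constant value of $\ell$ equals $u(x)$. Hence $L(u,x,r)=u(x)$ for all $x\in D$ and all admissible $r$, which is precisely the mean value equality \eqref{A}. Since $u$ is continuous on $D$, Theorem~1.4 now yields that $u$ is harmonic in $D$.

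The only delicate point is the passage from the integral identity to the pointwise relation $\ell'\equiv 0$: a priori the hypothesis provides only continuity of $\ell$, so the differentiation must be justified by the bootstrap observation that the integral on the right is automatically $C^1$, which in turn forces $\ell$ to be $C^1$. Everything else is routine, the polar-coordinate identity being standard and the limit $L(u,x,r)\to u(x)$ following from the uniform continuity of $u$ on compact subsets of $D$.
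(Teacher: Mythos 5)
Your proof is correct and is essentially the paper's own argument: the polar-coordinate identity you differentiate is just the integrated form of the relation $\partial_r A (u,x,r) = m r^{-1} [ L (u,x,r) - A (u,x,r) ]$ on which the paper's proof rests, and both arguments conclude that a mean value is independent of $r$, send $r \to +0$ using continuity of $u$, and finish by citing a converse mean value theorem from Section~1. The only difference is cosmetic --- you show $L (u,x,r) \equiv u(x)$ for all admissible $r$ and invoke Theorem~1.4, whereas the paper shows $A (u,x,r) \equiv u(x)$ for small $r$ and invokes Theorem~1.7 --- and in either case the cited theorem assumes $u \in C^0 (\overline D)$ while here $u \in C^0 (D)$ only, so strictly speaking a routine localization to balls compactly contained in $D$ is needed, a point the paper's proof glosses over as well.
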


\begin{proof}
If $r \in (0, \rho)$, where $\rho$ is a small positive number, the function $A
(u,x,r)$ is defined for $x$ belonging to an open subset of $D$ depending on the
smallness of $\rho$. Moreover, $A (u,x,r)$ is differentiable with respect to $r$
and\\[-3mm]
\[ \partial_r A (u,x,r) = m r^{-1} [ L (u,x,r) - A (u,x,r) ] = 0 \quad \mbox{for} \ 
r \in (0, \rho) ,
\]
where the last equality is a consequence of \eqref{L=A}. Therefore, $A (u,x,r)$ does
not depend on $r \in (0, \rho)$. On the other hand, shrinking $B_r (x)$ to its
centre by letting $r \to 0$ and taking into account that $u \in C^0 (D)$, one
obtains that $A (u,x,r) \to u (x)$ as $r \to 0$ for $x$ belonging to an arbitrary
closed subset of $D$. Hence for every $x \in D$ we have that $u (x) = A (u,x,r)$ for
all $r \in (0, \rho)$ with some $\rho$, whose smallness depends on $x$. Then
Theorem~1.7 yields that $u$ is harmonic in $D$.
\end{proof}

\noindent Another proof of this result was published by Freitas and Matos \cite{FM},
who, presumably, were unaware of the paper \cite{BR}. However, their paper contains
a generalization of Theorem~1.8 to subharmonic functions; see Gilbarg and
Trudinger's monograph \cite{GT}, p.~13, for their definition.

\begin{definition}
The function $u \in C^2 (D)$ is called subharmonic in a domain $D$ if it satisfies
the inequality $\nabla^2 u \geq 0$ there.
\end{definition}

\noindent A characteristic property of a subharmonic $u \in C^0 (D)$ is as follows.
{\it For every ball $B = B_r (x)$ such that $\overline B \subset D$ and every $h$
harmonic in $B$ and satisfying $h \geq u$ on $\partial B$ the inequality $h \geq u$
holds in $B$ as well.} Therefore, for every subharmonic $u$ we have\\[-3mm]
\begin{equation}
A (u,x,r) \leq L (u,x,r) \quad \mbox{for each} \ B_r (x) \ \mbox{such that both sides
are defined}. \label{L<A}
\end{equation}
It is proved in \cite{FM} that this inequality characterizes subharmonic functions.

\begin{theorem}
Let $u$ be continuous in $D$. Then $u$ is subharmonic in $D$ provided \eqref{L<A}
holds.
\end{theorem}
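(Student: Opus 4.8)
The plan is to follow the proof of Theorem~1.8 almost verbatim, replacing the equality $L = A$ by the inequality \eqref{L<A} and keeping track of the resulting monotonicity in $r$. The only genuinely new point is the passage from a mean value inequality to subharmonicity in the generalized sense, since $u$ is assumed merely continuous while Definition~1.5 applies only to $C^2$ functions; this is where the real obstacle lies.

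First I would record that, because $u \in C^0(D)$, for $r \in (0,\rho)$ the quantity $A(u,x,r)$ is defined on an open subset of $D$ (depending on $\rho$) and is a $C^1$ function of $r$ there, with
\[ \partial_r A(u,x,r) = m\,r^{-1}\,[\,L(u,x,r) - A(u,x,r)\,] . \]
This is obtained exactly as in Theorem~1.8, by writing $A(u,x,r)$ as a radial integral of $L(u,x,\cdot)$ and differentiating. Invoking \eqref{L<A}, the bracket is nonnegative, so $\partial_r A \ge 0$ and hence $r \mapsto A(u,x,r)$ is nondecreasing on its interval of definition.

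Next, letting $r \to +0$ and using $u \in C^0(D)$ to get $A(u,x,r) \to u(x)$, monotonicity yields the solid sub-mean value inequality
\[ u(x) \le A(u,x,r) \quad \text{for all } x \in D \text{ and all } r \text{ with } B_r(x) \subset D . \]

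Finally, and this is the main difficulty, I would convert this inequality into subharmonicity at the level of continuous functions. The cleanest route is to connect it to the harmonic-majorant characterization quoted just before \eqref{L<A}: given a ball $B$ with $\overline B \subset D$ and $h$ harmonic in $B$, continuous on $\overline B$, with $h \ge u$ on $\partial B$, the difference $v = u - h$ still satisfies $v(x) \le A(v,x,r)$, since $A(h,x,r) = h(x)$ by the volume mean value property of harmonic functions. Any continuous function enjoying this sub-mean value property obeys the maximum principle (if an interior maximum $M = v(x^\ast)$ were attained, then $M = v(x^\ast) \le A(v,x^\ast,r) \le M$ forces $v \equiv M$ near $x^\ast$, and an open--closed argument propagates this). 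Hence $v \le 0$ on $\partial B$ forces $v \le 0$ in $B$, i.e. $h \ge u$, which is precisely subharmonicity. Thus the monotonicity computation is routine; the point requiring care is justifying this last equivalence for merely continuous $u$, which I expect either to cite as the standard characterization (e.g. \cite{GT} or \cite{ABR}) or to supply via the short maximum-principle argument sketched above.
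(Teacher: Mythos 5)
Your proof is correct, but there is nothing in the paper to compare it against line by line: Theorem~1.9 is quoted from Freitas and Matos \cite{FM} without proof, the survey only remarking that \eqref{L<A} characterizes subharmonic functions. Judged on its own merits, your argument is sound and self-contained. The identity $\partial_r A(u,x,r) = m r^{-1}[L(u,x,r)-A(u,x,r)]$ indeed holds for merely continuous $u$ (since $L(u,x,\cdot)$ is continuous, $A(u,x,\cdot)$ is $C^1$ in $r$), so \eqref{L<A} gives monotonicity of $A(u,x,\cdot)$, and with $A(u,x,r)\to u(x)$ as $r\to+0$ this yields the solid sub-mean inequality $u(x)\le A(u,x,r)$. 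Your final step is also the right one, and it resolves the only delicate point of the statement: because the paper's definition of subharmonicity (Definition~1.2, not 1.5 as you cite it) requires $u\in C^2$, the conclusion has to be read through the characteristic property quoted just before \eqref{L<A} — comparison with harmonic majorants on balls — and that is exactly what your maximum-principle argument establishes. The open--closed propagation of an interior maximum of $v=u-h$ is valid since a ball is connected, and $A(h,x,r)=h(x)$ is legitimate by Theorem~1.2; taking $h$ continuous on $\overline B$, as you do, is the standard and necessary reading of the boundary comparison. In effect you have produced an elementary proof by recycling the paper's own proof of Theorem~1.8 (Beckenbach--Reade) and appending the classical maximum principle for functions with the sub-mean value property, which is a perfectly good independent route to the Freitas--Matos result.
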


In conclusion of this section, we mention a property, which is, in some sense,
similar to \eqref{L=A}, and has received much attention; see \cite{NV}, pp.
365--368. It consists in equating the values of $L (u,x,r)$ corresponding to some
$u$ defined on $\RR^m$, $m \geq 2$, for two different radii $r_1, r_2 > 0$ at every
$x$.

The plan of the main part of the paper is as follows. We begin with generalizations
of the B\^ocher--Koebe theorem because these results considered in Sect.~2 are not
so widely known. In Sect.~3, we describe results related to restricted mean value
properties. Mean value properties for non-harmonic functions are considered in
Sect.~4.

\vspace{-3mm}

{\centering \section{Generalizations of the B\^ocher--Koebe theorem}
}

The B\^ocher--Koebe theorem (Theorem 1.5) characterizing harmonic functions in
terms of the zero flux property is not so widely known as various results based on
mean value properties, in particular, restricted ones. In the survey article
\cite{NV}, two sections are devoted to the latter topic, but the authors just
mention a few papers dealing with the zero flux property and its generalizations.
The aim of this section is to fill in this gap at least partially.

However, prior to presenting several results of apparent interest it is worth
noticing that Theorem~1.5 can be improved. It is mentioned after its formulation
that deriving equality \eqref{A} from \eqref{BK} and then applying Theorem~1.4 one
obtains a proof of Theorem~1.5. The assumption made in Theorem~1.4 that \eqref{A}
holds for all spheres in $D$ is superfluous. It can be weakened by using Theorem~1.7
instead of Theorem~1.4, which leads to the following.

\begin{theorem}
Let $D$ be a bounded domain in $\RR^m$, $m \geq 2$. Then $u$ belonging to $C^0
(\overline D) \cap C^1 (D)$ is harmonic in $D$ provided for every $x \in D$ there
exists a radius $r (x)$ such that $\overline B \subset D$, where $B = B_{r (x)}
(x)$, and equality \eqref{BK} holds for this~$B$.
\end{theorem}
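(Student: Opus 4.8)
The plan is to follow the route sketched after Theorem~1.5 for the B\^ocher--Koebe theorem, but to finish with the restricted mean value theorem (Theorem~1.7) in place of Theorem~1.4. Concretely, I would try to verify that $u$ satisfies the restricted mean value property with respect to spheres, i.e. that $L(u,x,r(x)) = u(x)$ for every $x\in D$, and then quote Theorem~1.7 to conclude harmonicity. The device linking the flux in \eqref{BK} to the spherical mean in \eqref{A} is the identity
\[
\partial_r L(u,x,r) = \frac{1}{m\,\omega_m\, r^{m-1}} \int_{\partial B_r(x)} \frac{\partial u}{\partial n}\,\D S ,
\]
valid for $u\in C^1(D)$ whenever $\overline{B_r(x)}\subset D$; it follows by writing $L(u,x,r) = (m\,\omega_m)^{-1}\int_{|\xi|=1} u(x+r\xi)\,\D S(\xi)$, differentiating under the integral, and observing that $\xi$ is the exterior unit normal on $\partial B_r(x)$.

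With this identity the hypothesis acquires a transparent meaning: for each $x$ the prescribed radius $r(x)$ is a stationary point of the single-variable function $r\mapsto L(u,x,r)$. Since $u\in C^0(D)$ gives $L(u,x,r)\to u(x)$ as $r\to+0$, integrating the identity from $0$ to $r(x)$ turns the desired equality into
\[
L(u,x,r(x)) - u(x) = \frac{1}{m\,\omega_m}\int_0^{r(x)} s^{1-m}\!\left(\int_{\partial B_s(x)} \frac{\partial u}{\partial n}\,\D S\right)\D s ,
\]
so the whole matter reduces to showing that this iterated integral vanishes.

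The main obstacle is exactly this last step, and it is a substantial one. The hypothesis forces the inner flux to vanish only at the endpoint $s=r(x)$, whereas the integral above involves the flux through all the concentric spheres $\partial B_s(x)$ with $0<s\le r(x)$. A single stationary sphere does not pin down the mean value: it merely says that $r\mapsto L(u,x,r)$ has a horizontal tangent at $r(x)$, which is compatible with $L(u,x,r(x))\neq u(x)$ --- for instance, a function whose Laplacian integrates to zero over some enclosing ball yet is not identically zero produces such a stationary, non--mean-value sphere. Thus the crux is to upgrade the single vanishing flux into the vanishing of the flux through the entire nested family, equivalently into the statement that $r\mapsto L(u,x,r)$ is constant on $\big(0,r(x)\big]$; this is the content that must be extracted from \eqref{BK} at each point. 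Once it is secured, $L(u,x,\cdot)\equiv u(x)$ on $\big(0,r(x)\big]$ furnishes the hypothesis of Theorem~1.7, and harmonicity follows. I expect essentially all the difficulty of the theorem to be concentrated in justifying this upgrade.
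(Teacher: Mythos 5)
Your setup coincides exactly with the paper's, because the paper ``proves'' this theorem by nothing more than the remark preceding it: in Kellogg's proof of the B\^ocher--Koebe theorem one derives \eqref{A} from \eqref{BK} and applies Theorem~1.4, and the author proposes to finish instead with Theorem~1.7, which requires \eqref{A} at only one radius per centre. Your identity $\partial_r L(u,x,r)=(m\,\omega_m\,r^{m-1})^{-1}\int_{\partial B_r(x)}\partial u/\partial n\,\D S$ is precisely the mechanism of Kellogg's derivation, and your diagnosis of what it does and does not give is correct: \eqref{BK} for the single ball $B_{r(x)}(x)$ yields only stationarity of $r\mapsto L(u,x,r)$ at $r=r(x)$, whereas $L(u,x,r(x))=u(x)$ requires the flux to vanish through every concentric sphere $\partial B_s(x)$ with $0<s\le r(x)$. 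Since you supply no argument for this upgrade --- and say so --- your proposal is not a proof; it is a reduction of the theorem to exactly the step that carries all the content.

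You should know, however, that the missing step is not to be found in the paper either: the author's justification makes the leap silently, and the argument is complete only if the hypothesis is read as the restricted zero flux property for \emph{all} concentric balls of radius at most $r(x)$. Under that reading your outline closes verbatim --- $L(u,x,\cdot)$ is constant on $(0,r(x)]$, equals $u(x)$ in the limit $r\to+0$ by continuity of $u$, and Theorem~1.7 (this is where $u\in C^0(\overline D)$ and boundedness of $D$ enter) finishes. That the upgrade is not a formality can be seen from the one-dimensional analogue of the literal statement, which is false: take $v(t)=\sin\big(1/[t(1-t)]\big)$ and $u(t)=\int_{1/2}^t v(s)\,\D s$ on $(0,1)$, so that $u\in C^0([0,1])\cap C^1((0,1))$; for $x\ne 1/2$, as $r$ increases to $\min(x,1-x)$ the value $v(x-r)$ (or $v(x+r)$) runs through $[-1,1]$ infinitely often while the other endpoint value varies continuously, so the intermediate value theorem gives $r(x)$ with zero flux $u'(x+r(x))-u'(x-r(x))=0$ (and for $x=1/2$ every $r$ works by symmetry), yet $u$ is not affine. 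This does not contradict the $m\ge 2$ statement, but it shows that the flux condition at a single radius, unlike the mean value condition of Theorem~1.7, constrains only $\nabla u$ and admits no maximum-principle mechanism; any proof of the theorem as literally stated would need ideas absent from both your proposal and the paper.
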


\noindent Some extensions of the B\^ocher--Koebe theorem were obtained by Evans
\cite{Ev} (see p.~286 of his paper for the formulation) and Raynor \cite{R} for $m =
2$ and 3, respectively.

\vskip7pt {\bf 2.1. Generalizations of the zero flux property.} If $u \in C^2 (D)$
is a harmonic function in a bounded domain $D \subset \RR^m$, $m \geq 2$, and $v \in
C^1 (D)$, then for any piecewise smooth subdomain $D'$ such that $\overline{D'}
\subset D$ the first Green's formula for $u$ and $v$ is as follows:\\[-3mm]
\begin{equation}
\int_{D'} \nabla u \cdot \nabla v \, \D x = \int_{\partial D'} v \, \frac{\partial
u}{\partial n} \, \D S \, . \label{G}
\end{equation}
It occurs that this equality serves itself as a characteristic of harmonic functions
and yields several other their characteristic properties.

First, it is well known that \eqref{G} defines weak solutions of the Laplace
equation provided $v$ is an arbitrary function from $C^1 (\overline{D'})$ vanishing
on $\partial D'$. It was found long ago that these solutions are harmonic (see, for
example, \cite{Mi}, containing the vast bibliography of classical papers), and in
this sense \eqref{G} characterizes these functions. Second, if $v \equiv 1$, then
\eqref{G} turns into the zero flux property \eqref{D} discussed in Theorems~1.5 and
2.1.

Furthermore, if $v$ is also harmonic, then \eqref{G} implies the equality\\[-3mm]
\begin{equation}
\int_{\partial D'} u \, \frac{\partial v}{\partial n} \, \D S = \int_{\partial D'} v
\, \frac{\partial u}{\partial n} \, \D S \, , \label{Gg}
\end{equation}
which was used by Gergen \cite{G} for obtaining the following generalization of the
B\^ocher--Koebe theorem.

\begin{theorem}
Let $D \subset \RR^m$, $m \geq 2$, be a bounded domain and let $v \in C^2 (\overline
D)$ be a harmonic function in $D$ such that $v > 0$ in $\overline D$. Then $u \in
C^1 (\overline D)$ is harmonic in $D$ provided equality \eqref{Gg} (with $D'$ 
changed to $B$) holds for every ball $B$ such that $\overline B \subset D$.
\end{theorem}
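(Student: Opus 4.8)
The plan is to reduce Theorem 2.2 to the Böcher–Koebe theorem (Theorem 1.5), or rather to its improved version Theorem 2.1, by showing that equality \eqref{Gg} for a fixed positive harmonic weight $v$ forces the ordinary zero flux property \eqref{BK} to hold for $u$. The key observation is that the hypothesis gives us, for every ball $B$ with $\overline B \subset D$, the identity
\[
\int_{\partial B} u \, \frac{\partial v}{\partial n} \, \D S = \int_{\partial B} v \, \frac{\partial u}{\partial n} \, \D S .
\]
Since $v$ is a fixed positive harmonic function, the idea is to divide through by $v$ and absorb the weight. Concretely, I would consider the function $w = u/v$, which is well defined and $C^1$ on $\overline D$ because $v > 0$ there, and try to show that $w$ satisfies a weighted zero flux condition that, combined with the harmonicity of $v$, yields harmonicity of $u$.

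First I would recall the second Green-type identity in the form \eqref{Gg}: applying the first Green's formula \eqref{G} to the pair $(u,v)$ and to the pair $(v,u)$ and subtracting gives, for any harmonic $v$ and any sufficiently smooth $u$, the relation
\[
\int_{B} v \, \nabla^2 u \, \D x = \int_{\partial B} v \, \frac{\partial u}{\partial n} \, \D S - \int_{\partial B} u \, \frac{\partial v}{\partial n} \, \D S .
\]
The right-hand side is exactly the difference of the two boundary integrals in \eqref{Gg}, which the hypothesis asserts to vanish for every admissible ball $B$. Hence the plan is to show that
\[
\int_{B} v \, \nabla^2 u \, \D x = 0 \quad \text{for every ball } B \text{ with } \overline B \subset D .
\]
A regularity argument is needed first: the hypothesis only gives $u \in C^1(\overline D)$, so one must establish enough smoothness of $u$ to make sense of $\nabla^2 u$ and of the integrated identity above. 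I expect this to follow from a mollification or weak-solution argument, since \eqref{Gg} for all balls is a strong integral constraint; alternatively one derives \eqref{A} for $u$ directly from the weighted flux condition and invokes Theorem 1.4 or Theorem 1.7.

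Granting the identity $\int_B v \, \nabla^2 u \, \D x = 0$ for all such balls, the conclusion follows quickly. Since $v$ is continuous and strictly positive, and the integral of $v \, \nabla^2 u$ vanishes over every ball $B$ with $\overline B \subset D$, a standard localization argument (shrinking $B$ to a point and using the Lebesgue differentiation theorem, or simply the vanishing-integral-over-all-balls lemma) forces $v \, \nabla^2 u = 0$ pointwise, and therefore $\nabla^2 u = 0$ throughout $D$ because $v$ never vanishes. Thus $u$ is harmonic in $D$, as claimed.

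The main obstacle, as in the classical Böcher–Koebe argument, is the regularity step: turning the purely integral hypothesis \eqref{Gg} into a statement that legitimately involves $\nabla^2 u$, given only that $u \in C^1(\overline D)$. In the unweighted case ($v \equiv 1$) this is resolved by showing that \eqref{BK} implies the mean value property \eqref{A} and then appealing to Koebe's converse (Theorem 1.4) for the regularity and harmonicity at once. For the weighted version I would expect the cleanest route to parallel Gergen's original approach: reduce, via the change of unknown $w = u/v$ and the harmonicity of $v$, to the unweighted zero flux property for a transformed problem, so that Theorem 1.5 (or Theorem 2.1) applies directly and supplies the missing smoothness, rather than attempting to differentiate $u$ twice by hand.
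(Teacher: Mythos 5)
The paper itself gives no proof of Theorem 2.2 --- it is stated as Gergen's result with a citation to \cite{G} --- so your argument has to stand on its own, and as written it does not. The genuine gap is the one you flagged yourself and then failed to close: every step you actually carry out presupposes $u \in C^2$. Green's second identity $\int_B v\,\nabla^2 u \,\D x = \int_{\partial B}\big(v\,\partial u/\partial n - u\,\partial v/\partial n\big)\,\D S$ is only available once $u$ is known to be twice differentiable, so the chain ``\eqref{Gg} implies $\int_B v\,\nabla^2 u\,\D x=0$, hence $v\,\nabla^2 u\equiv 0$, hence $\nabla^2 u\equiv 0$'' establishes nothing for $u \in C^1(\overline D)$; upgrading the integral hypothesis to a statement about $\nabla^2 u$ is the entire content of theorems of B\^ocher--Koebe type, not a technicality to be deferred. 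Worse, the route you single out as ``the cleanest'' is flawed. Setting $w = u/v$, one has $\partial_n u = v\,\partial_n w + w\,\partial_n v$ on $\partial B$, hence
\[
\int_{\partial B} \left( v \, \frac{\partial u}{\partial n} - u \, \frac{\partial v}{\partial n} \right) \D S \;=\; \int_{\partial B} v^2 \, \frac{\partial w}{\partial n} \, \D S \, ,
\]
so hypothesis \eqref{Gg} is equivalent to a flux condition for $w$ \emph{weighted by} $v^2$, not to the unweighted condition \eqref{BK}. This corresponds to the divergence-form operator $\mathrm{div}(v^2 \nabla w)$ rather than to the Laplacian; neither Theorem 1.5 nor Theorem 2.1 applies to it, and proving a B\^ocher--Koebe theorem for that operator is exactly as hard as the theorem you are trying to prove. (The weight is constant only when $v$ is, which is the case already covered by Theorem 1.5.)

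A way to close the gap with only $C^1$ data is to work with the continuous vector field $F = v\nabla u - u\nabla v$, whose flux through every sphere $\partial B$ with $\overline B \subset D$ vanishes by \eqref{Gg}. Mollifying componentwise, the flux of $F_\varepsilon = F * \rho_\varepsilon$ through $\partial B_r(x)$ is an average of fluxes of $F$ through the translated spheres $\partial B_r(x-z)$, $|z|\le\varepsilon$, hence vanishes for balls well inside $D$; since $F_\varepsilon$ is smooth, the divergence theorem gives $\mathrm{div}\, F_\varepsilon \equiv 0$ there, and letting $\varepsilon \to 0$ yields $\int_D F \cdot \nabla\phi \,\D x = 0$ for every test function $\phi$. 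Using that $v \in C^2(\overline D)$ is harmonic (so $\int_D \nabla v \cdot \nabla(u\phi)\,\D x = 0$), this identity rearranges to $\int_D \nabla u \cdot \nabla(v\phi)\,\D x = 0$; since $v > 0$ and $v \in C^2$, functions of the form $v\phi$ exhaust $C^1_c(D)$ up to density, so $u$ is harmonic in the weak sense, and Weyl's lemma (the regularity of weak solutions you invoke via \cite{Mi}) concludes. That is the missing idea; your proposal gestures at ``a mollification or weak-solution argument'' but the object to mollify is $F$, not $u$, and without this step the proof does not exist.
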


{\bf 2.2. Local characterizations of harmonicity.} In his note \cite{S} published in
1932, Saks improved Theorems 1.5 and 2.2 in the two-dimensional case. The general
form of his first assertion (its simple proof is reproduced here) is as follows.

\begin{theorem}
Let $D \subset \RR^m$, $m \geq 2$, be a bounded domain. Then $u \in C^1 (D)$ is
harmonic in $D$ provided for every $x \in D$\\[-5mm]
\begin{equation}
\lim_{r \to +0} \frac{1}{|S_r| \, r^2} \int_{S_r (x)} \frac{\partial
u}{\partial n} \, \D S = 0 \, , \label{S1}
\end{equation}
where $S_r (x)$ stands for the sphere of radius $r$ centred at $x$ and\/ $|S_r|$ is
its area.
\end{theorem}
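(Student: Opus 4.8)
The plan is to reduce the hypothesis \eqref{S1} to Blaschke's asymptotic condition \eqref{Bl} and then invoke Theorem~1.6. The bridge between the two is the elementary but crucial identity that the normalized flux appearing in \eqref{S1} is exactly the radial derivative of the spherical mean $L(u,x,r)$.

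First I would record this identity. Writing the sphere in the form $y = x + r\xi$ with $|\xi| = 1$, one has $L(u,x,r) = |S_1|^{-1} \int_{|\xi|=1} u(x + r\xi)\, \D S(\xi)$, where $|S_1| = m\omega_m$. Since $u \in C^1 (D)$, differentiation under the integral sign is legitimate; because the outward unit normal at $y \in S_r (x)$ is $\xi = (y-x)/r$, so that $\partial u/\partial n = \xi \cdot \nabla u$, one obtains after returning to the variable $y$ the formula
\[
\partial_r L (u,x,r) = \frac{1}{|S_r|} \int_{S_r (x)} \frac{\partial u}{\partial n}\, \D S .
\]
Consequently the hypothesis \eqref{S1} says precisely that $\partial_r L (u,x,r) = o(r^2)$ as $r \to +0$, for each fixed $x \in D$.

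Next I would integrate this relation. For $r$ small enough that $\overline{B_r (x)} \subset D$, continuity of $u$ gives $L (u,x,r) \to u (x)$ as $r \to 0$, so the fundamental theorem of calculus yields $L (u,x,r) - u (x) = \int_0^r \partial_\rho L (u,x,\rho)\, \D\rho$. Given $\delta > 0$, the smallness of $\partial_\rho L$ forces $|\partial_\rho L (u,x,\rho)| \leq \delta\, \rho^2$ for all sufficiently small $\rho$; integrating then gives $|L (u,x,r) - u (x)| \leq \delta r^3/3$, whence $r^{-2} [ L (u,x,r) - u (x) ] \to 0$. This is exactly condition \eqref{Bl}, so Theorem~1.6 guarantees that $u$ is harmonic in $D$.

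The only subtlety---and the step I expect to require the most care---is justifying the passage from \eqref{S1} to the integrated estimate using only $u \in C^1 (D)$: one must check that $L (u,x,\cdot)$ is continuously differentiable on $(0, r]$ and continuous up to $\rho = 0$, so that the fundamental theorem of calculus applies, and that the bound $|\partial_\rho L| \leq \delta \rho^2$ may be used inside the integral. Since $\partial_\rho L (u,x,\rho) \to 0$ as $\rho \to 0$, it is bounded near the origin and hence integrable, so no difficulty arises; moreover all estimates are pointwise in $x$, matching the pointwise nature of \eqref{Bl}, so no uniformity in $x$ is needed.
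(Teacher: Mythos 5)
Your proof is correct, and its skeleton is the same as the paper's: both arguments deduce Blaschke's condition \eqref{Bl} from \eqref{S1} and then invoke Theorem~1.6. The bridge between the two conditions, however, is implemented differently, and the comparison is instructive. The paper writes $r^{-2}[L(u,x,r)-u(x)]$ as the spherical average of $u(y)-u(x)$ and applies the Lagrange mean value theorem along each radius, producing normal derivatives of $u$ evaluated at intermediate points $y+\rho\,(x-y)/r$ with $\rho$ depending on $y$; it then simply asserts that \eqref{S1} forces this quantity to vanish in the limit. That assertion is delicate: the intermediate points lie on spheres of varying radii, whereas hypothesis \eqref{S1} controls flux integrals over whole spheres $S_\rho(x)$, not pointwise values of $\partial u/\partial n$, so it cannot be applied to them directly (the paper's display also drops the factor $r$ produced by the mean value theorem). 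Your route --- the exact identity $\partial_r L(u,x,r)=|S_r|^{-1}\int_{S_r(x)}\partial u/\partial n\,\D S$, valid for $u\in C^1(D)$ by differentiation under the integral sign, followed by integration in $r$ and the $\delta$-estimate $|L(u,x,r)-u(x)|\leq \delta r^3/3$ --- is precisely the rigorous repair of this step: it is equivalent to using the integral form of the radial remainder plus Fubini, which replaces the problematic pointwise evaluation by the flux integrals that \eqref{S1} actually controls. So your argument buys full rigor at essentially no extra cost, while the paper's version is shorter but leaves this gap (and the typo) unaddressed.
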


\begin{proof}
Let us consider $F (r) = r^{-2} [ L (u,x,r) - u (x) ]$ for arbitrary $x \in D$ and
sufficiently small~$r$. Then\\[-7mm]
\begin{eqnarray*}
&& F (r) = \frac{1}{|S_r| \, r^2} \int_{S_r (x)} [u (y) - u (x)] \, \D S_y
\\ && \ \ \ \ \ \ \ = \frac{1}{|S_r| \, r^2} \int_{S_r (x)} \frac{\partial
u}{\partial n} \left( y + \rho \big( [y-x] / r \big) \frac{x-y}{r} \right) \, \D S_y
\, ,
\end{eqnarray*}
where $0 < \rho \big( [y-x] / r \big) < r$ for all $x \in D$ and all $[y-x] / r \in
S_1$. Since \eqref{S1} implies that $F (r) \to 0$ as $r \to +0$, Theorem~1.6
guarantees that $u$ is harmonic.
\end{proof}

\noindent We omit the formulation of the second theorem by Saks because it
generalizes Theorem~2.2 in the same way as the last theorem generalizes
Theorem~1.5. Instead, we turn to an assertion analogous to the last theorem, but
characterizing biharmonic functions, that is, those which satisfy the equation
\begin{equation}
\Delta^2 u = 0 , \quad \mbox{where} \ \Delta = \nabla^2 . \label{bih}
\end{equation}

\begin{theorem}[Cheng \cite{Ch}]
Let $D$ be a domain in $\RR^2$. Then $u \in C^3 (D)$ is biharmonic in $D$ provided
\begin{equation}
\lim_{r \to +0} \frac{1}{r} \int_0^{2 \pi} \frac{\partial^3 u}{\partial r^3} (x_1 +
r \cos \theta , x_2 + r \sin \theta) \, \D \theta = 0 \label{S2}
\end{equation}
for every $x \in D$.
\end{theorem}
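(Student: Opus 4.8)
The plan is to translate the hypothesis, which concerns the third radial derivative of $u$, into Blaschke's harmonicity criterion (Theorem~1.6) applied to $\Delta u$, exploiting that $u$ is biharmonic precisely when $\Delta u$ is harmonic. Throughout write $M(r) = L(u,x,r) = \frac{1}{2\pi}\int_0^{2\pi} u(x_1 + r\cos\theta, x_2 + r\sin\theta)\,\D\theta$ for the spherical mean. Since differentiation in $r$ commutes with the angular integration, the integral in \eqref{S2} equals $2\pi M'''(r)$, so the hypothesis reads $M'''(r) = o(r)$ as $r \to +0$ for every $x \in D$.

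First I would record the Darboux identity. Because $u \in C^3(D) \subset C^2(D)$, the divergence theorem gives $M'(r) = (2\pi r)^{-1}\int_{B_r(x)} \Delta u\,\D y$, and differentiating once more yields, for $r > 0$,
\[ M''(r) + \frac{1}{r}\,M'(r) = L(\Delta u, x, r) =: N(r) . \]
Since $\Delta u \in C^0(D)$, the right-hand side is continuous up to $r = 0$ with $N(0) = \Delta u(x)$. Moreover $r \mapsto u(x + r\omega)$ is $C^3$ for each unit vector $\omega$, so differentiation under the integral sign shows that $M$ extends to an even function of class $C^3$ near $r = 0$; in particular $M'(0) = 0$ and $M'''(0) = 0$. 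Letting $r \to +0$ in the Darboux identity and using $M'(r)/r \to M''(0)$ gives the compatibility relation $N(0) = 2M''(0)$, that is $M''(0) = \tfrac12\Delta u(x)$.

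The core step is then a double integration of the hypothesis. Put $\phi = M'$, so that $\phi \in C^2$ near $0$, $\phi(0) = 0$, and $\phi''(r) = M'''(r) = o(r)$. Integrating once gives $\phi'(r) - \phi'(0) = \int_0^r \phi''(s)\,\D s = o(r^2)$, and integrating the resulting estimate once more gives $\phi(r) = \phi'(0)\,r + o(r^3)$, whence $\phi(r)/r = \phi'(0) + o(r^2)$. Substituting both into $N(r) = \phi'(r) + \phi(r)/r$ yields
\[ N(r) - N(0) = \big[\phi'(r) - \phi'(0)\big] + \big[\phi(r)/r - \phi'(0)\big] = o(r^2) , \]
because $N(0) = 2\phi'(0)$. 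Thus $\lim_{r\to+0} r^{-2}\big[L(\Delta u, x, r) - \Delta u(x)\big] = 0$ at every $x \in D$, which is exactly hypothesis \eqref{Bl} of Blaschke's Theorem~1.6 for the continuous function $\Delta u$. Hence $\Delta u$ is harmonic in $D$; being harmonic it is analytic, so $\Delta^2 u = \Delta(\Delta u) = 0$ and $u$ is biharmonic.

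I expect the main obstacle to be the regularity bookkeeping near $r = 0$: justifying that $M \in C^3$ up to the origin with the correct vanishing of its odd-order derivatives, and turning the bare pointwise hypothesis $M'''(r) = o(r)$ into the two successive $o(r^2)$ and $o(r^3)$ estimates (the little-$o$ relations do integrate correctly, but this must be done carefully rather than by formal Taylor expansion, since $u$ is only of class $C^3$). The identification of the target condition with Blaschke's criterion, via the Darboux identity and the equivalence ``$u$ biharmonic $\Leftrightarrow$ $\Delta u$ harmonic,'' is the conceptual heart, and it is where the two-dimensionality enters through the coefficient $1/r$ in the Darboux identity.
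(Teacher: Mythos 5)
Your proof is correct, but there is no proof in the paper to compare it against: the survey states Cheng's theorem without argument, merely citing \cite{Ch}. Your route is nevertheless very much in the spirit of the paper's own proof of the immediately preceding Theorem~2.3 (Saks), which also concludes by invoking Blaschke's Theorem~1.6; you push that strategy one level up, applying Blaschke's criterion to $\Delta u$ rather than to $u$ itself. The ingredients all check out under the stated $C^3$ regularity: the identity $\int_0^{2\pi} \partial_r^3 u \, \D \theta = 2\pi M'''(r)$ (differentiation under the integral sign), the two-dimensional Darboux equation $M'' + r^{-1} M' = L(\Delta u, x, r)$ via the divergence theorem, the vanishing of $M'(0)$ and $M'''(0)$ by evenness of $M$ in $r$, and the two successive integrations turning the pointwise hypothesis $M'''(r) = o(r)$ into $L(\Delta u, x, r) - \Delta u(x) = o(r^2)$ — little-$o$ bounds do integrate, by the $\epsilon$--$\delta$ estimate $|\phi''(s)| \leq \epsilon s$ on $(0,\delta)$, and since Blaschke's theorem needs only a pointwise (not locally uniform) limit, pointwise estimates in $x$ suffice. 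The final passage from harmonicity of $\Delta u$ to biharmonicity of $u$ is also sound, since a harmonic function is $C^\infty$, so $\Delta^2 u = \Delta(\Delta u) = 0$ holds classically. Worth noting: your argument generalizes verbatim to $\RR^m$ with Darboux coefficient $(m-1)/r$ (the compatibility relation becoming $N(0) = m M''(0)$), so the two-dimensionality of Cheng's statement is not essential — a point the survey does not make.
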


One more characterization of harmonic functions based on an asymptotic property was
obtained by Beckenbach. The property used in his paper \cite{Beck2} is as follows:
\begin{equation}
\int_{B_r (y)} \!\! |\nabla u|^2 \, \D x - \int_{\partial B_r (y)} \!\! u \,
\frac{\partial u}{\partial n} \, \D S = o (r^2) \quad \mbox{as} \ r \to 0 .
\label{Gr}
\end{equation}
Here $y \in D \subset \RR^2$, and the right-hand side expression is obtained from
\eqref{G} by substituting $v = u$ and $D' = B_r (y)$.

\begin{theorem}
Let $u \in C^1 (D)$, and let relation \eqref{Gr} hold for every $y \in D$. If $u$
does not vanish in $D$, then $u$ is harmonic there.
\end{theorem}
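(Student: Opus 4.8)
The plan is to reduce the statement to Blaschke's characterization (Theorem~1.6): since $u\in C^0(D)$, it suffices to verify that \eqref{Bl} holds at every $y\in D$, i.e. that $r^{-2}[L(u,y,r)-u(y)]\to0$ as $r\to+0$. Fix $y\in D$; as $D$ is open, $\overline{B_r(y)}\subset D$ for all small $r$, so every quantity below is defined. I write $g(r)=L(u,y,r)$ for the circular mean of $u$ and $\phi(r)=L(u^2,y,r)$ for that of $u^2$, both of class $C^1$ in $r\ge0$ because $u\in C^1(D)$ and we are in $\RR^2$.

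First I would rewrite the two terms of \eqref{Gr} through these means. Using $u\,\partial u/\partial n=\tfrac12\,\partial(u^2)/\partial n$ together with the elementary two-dimensional identity $\int_{\partial B_r(y)}\partial w/\partial n\,\D S=2\pi r\,\frac{\D}{\D r}L(w,y,r)$, the boundary integral in \eqref{Gr} becomes $\pi r\,\phi'(r)$. For the volume term, continuity of $\nabla u$ gives $\int_{B_r(y)}|\nabla u|^2\,\D x=\pi r^2|\nabla u(y)|^2+o(r^2)$, since the integrand equals $|\nabla u(y)|^2+o(1)$ uniformly on $B_r(y)$. Substituting both into \eqref{Gr} yields $\phi'(r)=r\,|\nabla u(y)|^2+o(r)$, and integrating from $0$ (legitimate as $\phi'$ is continuous, with $\phi'(0)=0$) gives
\[\phi(r)=u(y)^2+\tfrac12\,r^2|\nabla u(y)|^2+o(r^2).\]

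Next I would split $\phi(r)=g(r)^2+V(r)$, where $V(r)=L((u-g(r))^2,y,r)\ge0$ is the variance of $u$ over the circle $\partial B_r(y)$. The first-order expansion $u(y+r\xi)-u(y)=r\,\nabla u(y)\cdot\xi+o(r)$, valid uniformly in $\xi\in\partial B_1$ because $u\in C^1$, gives $u(y+r\xi)-g(r)=r\,\nabla u(y)\cdot\xi+o(r)$ and hence, upon squaring and averaging, $V(r)=\tfrac12 r^2|\nabla u(y)|^2+o(r^2)$. Subtracting this from the displayed formula for $\phi$ leaves $g(r)^2-u(y)^2=o(r^2)$, that is $[g(r)-u(y)]\,[g(r)+u(y)]=o(r^2)$. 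Here the hypothesis that $u$ does not vanish enters decisively: since $g(r)+u(y)\to2u(y)\neq0$, the factor $g(r)+u(y)$ is bounded away from zero for small $r$, so $g(r)-u(y)=o(r^2)$. This is precisely \eqref{Bl}, and Theorem~1.6 then gives harmonicity of $u$ at $y$, hence throughout $D$.

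The main obstacle is the low regularity: with only $u\in C^1$ there is no classical Laplacian and no second-order Taylor expansion, so the naive route of identifying the left-hand side of \eqref{Gr} with $-\int_{B_r(y)}u\,\nabla^2u\,\D x$ via Green's first identity is unavailable. All of the required second-order information must instead be extracted indirectly, through the integrated estimate for $\phi'$ and through the variance computation, each of which has to be justified using only the uniform continuity of $\nabla u$. The non-vanishing assumption is not merely technical: it is exactly what licenses the final division that converts the $o(r^2)$ estimate for $\phi$ into the Blaschke estimate for $g$.
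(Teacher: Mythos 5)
The paper states this result (Theorem~2.5, due to Beckenbach \cite{Beck2}) without giving any proof --- it is merely cited --- so there is no in-paper argument to compare against; your proposal must stand on its own, and it does. Your reduction to Blaschke's theorem is correct and complete: the identity $\int_{\partial B_r(y)}\partial w/\partial n\,\D S = 2\pi r\,\frac{\D}{\D r}L(w,y,r)$ is valid for $w=u^2\in C^1$; the substitution turns \eqref{Gr} into $\phi'(r)=r|\nabla u(y)|^2+o(r)$, and integrating the $o(s)$ term from $0$ legitimately yields $o(r^2)$ (given $\varepsilon>0$, $|o(s)|\le\varepsilon s$ for small $s$, so the integral is at most $\varepsilon r^2/2$); the variance identity $\phi=g^2+V$ is an exact algebraic consequence of $g(r)$ being constant on each circle; and the expansion $V(r)=\tfrac12 r^2|\nabla u(y)|^2+o(r^2)$ follows from the uniform (in $\xi$) first-order Taylor estimate, with the two-dimensional average $\frac{1}{2\pi}\int_0^{2\pi}(\nabla u(y)\cdot\xi)^2\,\D\theta=\tfrac12|\nabla u(y)|^2$ computed correctly. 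The cancellation of the $\tfrac12 r^2|\nabla u(y)|^2$ terms and the factorization $[g(r)-u(y)][g(r)+u(y)]=o(r^2)$, with $g(r)+u(y)\to 2u(y)\ne 0$ justifying the division, gives exactly condition \eqref{Bl}, and Theorem~1.6 applies since $u\in C^0(D)$. This is a clean, purely elementary argument that also makes transparent why the nonvanishing hypothesis is needed and why only $C^1$ regularity suffices (all second-order information is extracted from the integrated quantity $\phi$, never from a pointwise Laplacian); it is very much in the spirit of Beckenbach's original method, which likewise works through means of $u^2$ and Blaschke's criterion.
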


\noindent Moreover, it is shown that the assumption $u \neq 0$ in this theorem can
be replaced by the requirement that $\partial^2_{x_1} u$ and $\partial^2_{x_2} u$
are Lebesgue integrable in~$D$.

\vskip7pt {\bf 2.3. Harmonicity via the zero flux property for cubes.} It occurs
that the assertion of Theorem~1.5 remains valid when hyperspheres are replaced by
hypersurfaces bounding $m$-cubes and having edges parallel to the coordinate axes.
Let
\[ Q_r (x) = \{ y \in \RR^m : |y_i - x_i| < r , \ i=1,\dots,m \} , \quad r > 0 ,
\]
denote an open cube centred at $x \in \RR^m$; it is the smallest cube of this kind
containing $B_r (x)$.

\begin{theorem}[Beckenbach \cite{Beck1}]
Let $D$ be a domain in $\RR^m$, $m \geq 2$. If $u$ belongs to $C^1 (D)$ and satisfies
the equality
\begin{equation}
\int_{\partial Q} \frac{\partial u}{\partial n} \, \D S = 0 \label{Beck}
\end{equation}
for every $Q = Q_r (x) \subset D$, then $u$ is harmonic in~$D$.
\end{theorem}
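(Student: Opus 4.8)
The plan is to reduce the assertion to the trivial case of twice-differentiable functions by means of the mollifier technique already invoked for Theorem~1.4. If $u$ were known to belong to $C^2 (D)$, the divergence theorem would turn \eqref{Beck} into $\int_{Q_r (x)} \nabla^2 u \, \D y = 0$ for every cube with $\overline{Q_r (x)} \subset D$; dividing by the volume $(2r)^m$ and letting $r \to +0$, the continuity of $\nabla^2 u$ would give $\nabla^2 u (x) = 0$ at each $x \in D$. The sole obstruction is that $u$ is merely of class $C^1$, so $\nabla^2 u$ need not exist and the divergence theorem is unavailable. Accordingly, I would work with mollifications and recover harmonicity in the limit.

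The central step is to show that the cube zero-flux property is inherited by mollifications. Fix a standard mollifier $\phi_\varepsilon \geq 0$ supported in $B_\varepsilon (0)$ with $\int \phi_\varepsilon \, \D z = 1$, and set $u_\varepsilon = u * \phi_\varepsilon$ on $D_\varepsilon = \{ x \in D : \mathrm{dist} (x, \partial D) > \varepsilon \}$; then $u_\varepsilon \in C^\infty (D_\varepsilon)$ and, since $u \in C^1$, one has $\nabla u_\varepsilon (y) = \int \phi_\varepsilon (z) \, \nabla u (y - z) \, \D z$. On each face of $\partial Q_r (x)$ the exterior normal is a constant vector $\pm e_i$, so translation by $-z$ carries $\partial Q_r (x)$ onto $\partial Q_r (x - z)$ while preserving the normal directions. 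Hence, writing $\partial_n u_\varepsilon = \nabla u_\varepsilon \cdot n$ and applying Fubini's theorem,
\[
\int_{\partial Q_r (x)} \frac{\partial u_\varepsilon}{\partial n} \, \D S_y = \int \phi_\varepsilon (z) \left( \int_{\partial Q_r (x - z)} \frac{\partial u}{\partial n} \, \D S_w \right) \D z .
\]
Provided $\overline{Q_r (x)}$ lies at distance greater than $\varepsilon$ from $\partial D$, every shifted cube satisfies $\overline{Q_r (x - z)} \subset D$ for $|z| \leq \varepsilon$, so the inner integral vanishes by \eqref{Beck}. Thus $u_\varepsilon$ enjoys the cube zero-flux property on a subdomain of $D_\varepsilon$.

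Being smooth, $u_\varepsilon$ now falls under the elementary case above, whence $\nabla^2 u_\varepsilon = 0$ and $u_\varepsilon$ is harmonic there. Since $u \in C^0 (D)$, the mollifications $u_\varepsilon$ converge to $u$ uniformly on compact subsets of $D$ as $\varepsilon \to +0$; because a locally uniform limit of harmonic functions is harmonic (apply Harnack's convergence theorem, Corollary~1.3, on closed balls), I would conclude that $u$ is harmonic in~$D$.

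The hard part will be the interchange in the central step, that is, verifying rigorously that the cube zero-flux property passes to $u_\varepsilon$. This rests entirely on the translation invariance of the exterior normals to the faces of a cube, which is precisely what legitimizes reducing $\int_{\partial Q_r (x)} \partial_n u_\varepsilon \, \D S$ to an average of the vanishing integrals $\int_{\partial Q_r (x - z)} \partial_n u \, \D S$; the same device, incidentally, reproves the local B\^ocher--Koebe Theorem~2.1. Everything else is routine.
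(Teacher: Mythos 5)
Your proof is correct, but its key step differs from the paper's. The paper also argues by smoothing and passing to the limit, yet instead of a generic mollifier it uses the Steklov-type mean $u_r (x) = (2r)^{-m} \int_{Q_r (\mathbf{0})} u (x + y) \, \D y$, i.e.\ it convolves $u$ with the normalized characteristic function of the cube itself. For this particular kernel one computes directly, using only the fundamental theorem of calculus in each coordinate direction (legitimate since $u \in C^1 (D)$), that
\[
\nabla^2 u_r (x) = \frac{1}{(2 r)^m} \int_{\partial Q_r (x)} \frac{\partial u}{\partial n} \, \D S \, ,
\]
so hypothesis \eqref{Beck}, applied to the single cube $Q_r (x)$, makes $u_r$ harmonic on $D_r$ at once; the conclusion then follows, exactly as in your argument, because $u_r \to u$ locally uniformly and locally uniform limits of harmonic functions are harmonic. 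Your version replaces this one identity by two steps: the Fubini/translation-invariance transfer of \eqref{Beck} to the mollification $u_\varepsilon$, and then the divergence theorem plus shrinking cubes for the smooth function $u_\varepsilon$. Both routes are sound; the paper's choice of kernel merges your two steps into a single computation and dispenses with both Fubini and the divergence theorem, at the price of tying the smoothing radius to the cube size. One caveat about your closing remark: the transfer device does not reprove Theorem~2.1, since there the flux is assumed to vanish only over \emph{one} ball per centre, with radius $r (x)$ depending on the centre, whereas your Fubini step needs vanishing flux over all translates $B_r (x - z)$, $|z| \leq \varepsilon$, of a ball of one and the same radius; what your device reproves is the unrestricted Theorem~1.5, while Theorem~2.1 is obtained in the paper by a different argument resting on Theorem~1.7.
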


\begin{proof}
Denoting by {\bf 0} the origin in $\RR^m$, we consider the following Steklov-type
mean function
\[ u_r (x) = (2 r)^{-m} \int_{Q_r (\bf 0)} u (x + y) \, \D y \, .
\]
It is defined in some $D_r$ approximating $D$ from inside for small values of $r$.
Then we have
\[ \partial_{x_i}^2 u_r (x) = \frac{1}{(2 r)^m} \int_{-r}^r \!\! \cdots \!\!
\int_{-r}^r \Big[ \partial_{x_i} u (x+y) \big|_{y_i=r} - \partial_{x_i} u (x+y)
\big|_{y_i=-r} \Big] \D_i x \, ,
\]
where $\D_i x = \D x_1 \dots \D x_{i-1} \D x_{i+1} \dots \D x_m$ for $i \neq 1,m$
and $\D_1 x$, $\D_m x$ have appropriate form. Therefore,
\[ \nabla^2 u_r (x) = \frac{1}{(2 r)^m} \int_{\partial Q_r (x)} \frac{\partial u}
{\partial n} \, \D S \, ,
\]
which vanishes on $D_r$ in view of \eqref{Beck}. On each compact subset of $D$, we
have that $u$ is the uniform limit of $u_r$ as $r \to +0$. Since $u_r$ is harmonic
in $D_r$ and this family of domains approximate $D$, we obtain that $u$ is harmonic
in $D$.
\end{proof}

{\centering \section{Results related to restricted mean value properties}
}

{\bf 3.1. The restricted mean value property with respect to spheres combined with
solubility of the Dirichlet problem.} Neither the proof of Theorem 1.7 nor proofs
of related results (see \cite{NV}, Sections 5 and 6, for a review) are trivial.
However, there is a class of bounded domains for which the assertion converse to
this theorem has a very simple proof. Indeed, this takes place when the restricted
mean value property is complemented by the assumption that the Dirichlet problem for
the Laplace equation is soluble in the domain.

\begin{theorem}
Let $u \in C^0 (\overline D)$, where $D \subset \RR^m$, $m \geq 2$, is a bounded
domain in which the Dirichlet problem for the Laplace equation has a solution
belonging to $ C^0 (\overline D)$ for every continuous function given on $\partial
D$. If $u$ has the restricted mean value property in $D$ with respect to spheres,
then $u$ is harmonic in~$D$.
\end{theorem}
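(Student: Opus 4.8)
The plan is to exploit solubility of the Dirichlet problem to manufacture a harmonic comparison function and then to reduce harmonicity of $u$ to showing that the difference vanishes. First I would solve the Dirichlet problem with boundary data $u|_{\partial D}$ (continuous, since $u \in C^0 (\overline D)$), obtaining by hypothesis a function $h \in C^0 (\overline D)$ that is harmonic in $D$ and equals $u$ on $\partial D$. Setting $w = u - h$, we have $w \in C^0 (\overline D)$ and $w \equiv 0$ on $\partial D$.

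Next I would verify that $w$ inherits the restricted mean value property. For each $x \in D$ the restricted mean value property of $u$ supplies a radius $r (x) > 0$ with $B_{r (x)} (x) \subset D$ and $L (u, x, r (x)) = u (x)$. Since $h$ is harmonic, Theorem~1.1 (together with continuity of $h$ on $\overline D$) gives $L (h, x, r (x)) = h (x)$ for this same radius; as $L$ is linear in its first argument, $L (w, x, r (x)) = w (x)$. Thus $w$ satisfies the restricted mean value property with the very same radii and in addition vanishes on $\partial D$, so it suffices to prove $w \equiv 0$.

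Suppose instead that $M := \max_{\overline D} w > 0$ (otherwise replace $w$ by $-w$, which obeys the same hypotheses). Because $w = 0$ on $\partial D$, the nonempty closed set $E = \{ x \in \overline D : w (x) = M \}$ is contained in $D$ and hence is compact. For any $x \in E$ the identity $w (x) = L (w, x, r (x))$ presents $M$ as the spherical average of $w$ over $\partial B_{r (x)} (x)$; since $w \leq M$ everywhere, continuity forces $w \equiv M$ on that entire sphere, so $\partial B_{r (x)} (x) \subset E$ (in particular the sphere avoids $\partial D$).

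The crux, and the point where the restricted setting departs from the classical one, is that only a single sphere is available at each point, so the maximum cannot be propagated through a solid ball in the usual way. I would instead use an extremal-coordinate argument: choose $x^* \in E$ with largest first coordinate (possible since $E$ is compact). As $x^* \in D$ we have $\partial B_{r (x^*)} (x^*) \subset E$, yet the point $x^* + r (x^*) \, e_1$, with $e_1 = (1, 0, \dots, 0)$, lies on this sphere and has first coordinate $x_1^* + r (x^*) > x_1^*$, contradicting the choice of $x^*$. Hence $M \leq 0$; applying the same reasoning to $-w$ yields $\min_{\overline D} w \geq 0$. Therefore $w \equiv 0$, that is $u = h$, and $u$ is harmonic in $D$.
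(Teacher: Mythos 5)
Your proposal is correct and follows essentially the same route as the paper: both reduce the problem to the harmonic Dirichlet solution with boundary data $u|_{\partial D}$, use linearity to transfer the restricted mean value property to the difference, and establish a maximum principle for such functions by an extremal-point argument in which the whole sphere $\partial B_{r(x)}(x)$ must lie in the set where the maximum is attained. The only (immaterial) differences are the order of the steps and your choice of extremal functional --- largest first coordinate instead of the paper's minimal distance to $\partial D$.
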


\begin{proof}
First, let us show that the theorem's assumptions yield that\\[-5mm]
\begin{equation}
\max_{x \in \overline D} u = \max_{x \in \partial D} u \, . \label{max}
\end{equation}
Denoting the left-hand side by $M$, we notice that it is sufficient to establish
that $u^{-1} (M) \cap \partial D$ is nonempty, where the preimage $u^{-1} (M)$ is a
closed subset of $\overline D$.

Assuming the contrary, we conclude (in view of boundedness of $D$) that there is a
point $x_0 \in u^{-1} (M)$, whose distance from $\partial D$ is minimal and
positive. By the restricted mean value property there exists some $r (x_0) > 0$ such
that $\partial B_{r (x_0)} (x_0) \subset D$ and equality \eqref{A} holds with $r =
r (x_0)$. Since $u (x_0) = M$, this maximal value $u$ attains at every point of
$\partial B_{r (x_0)} (x_0)$, but some of these points is closer to $\partial D$
than $x_0$, which leads to a contradiction proving \eqref{max}.

For $u \in C^0 (\overline D)$ we denote by $f$ its trace on $\partial D$ and by $u_0
\in C^0 (\overline D)$ the solution of the Dirichlet problem in $D$ with $f$ as the
boundary data. Hence $u_0$ has the unrestricted mean value property in $D$ with
respect to spheres, and so \eqref{max} holds for $u - u_0$ as well as for $- (u -
u_0)$, which implies that $u \equiv u_0$ in $D$ because $u \equiv u_0$ on $\partial
D$. Thus, $u$ is harmonic in~$D$.
\end{proof}

\noindent In the brief note \cite{B} by Burckel, this theorem is proved for
two-dimensional domains, whereas simple one-dimensional examples demonstrating that
the assumptions about boundedness of $D$ and continuity of $u_0$ in $\overline D$
are essential for validity of the theorem are given in \cite{CH}, pp.~280--281.

The question how to describe domains in which the Dirichlet problem is soluble has a
long history going back to the \emph{Essay on the Application of Mathematical
Analysis to the Theories of Electricity and Magnetism} by George Green (published in
1828), where this problem was posed for the first time. In the 19th century, the
well-known results about this problem were obtained by Gauss, Dirichlet, Riemann,
Weierstrass, C.~Neumann, Poincar\'e, Lyapunov and Hilbert. The final answer to the
above question was given by Wiener \cite{NW} in 1924, who introduced the notion of
capacity for this purpose. A detailed review of his result as well as of the
preceding work accomplished during the first quarter of the 20th century one finds
in the Kellogg's article \cite{Ke}.

\vskip7pt {\bf 3.2. An example due to Littlewood.} It had been found rather long ago
that the re\-stricted mean value property with respect to balls does not guarantee
harmonicity of a $C^0 (D)$-function in a bounded domain $D$ without some extra
bounding assumption. There are several examples demonstrating this; see \cite{NV},
p.~369 for references. Here, we reproduce the example proposed by Littlewood and
published in Huckemann's paper \cite{Hu}, p.~492 (in \cite{NV}, this example is
mentioned in passing on p.~369, in a quotation from the Littlewood's booklet
\cite{Lit}).

Let $D = B_1 ({\bf 0}) \subset \RR^2$ and let us define $u$ on $D$ by
putting\\[-3mm]
\begin{equation}
u (x) = a_k \log |x| + b_k \ \mbox{for} \ |x| \in [1-2^{-k} , 1-2^{-k-1} ] , \quad k
= 0,1,2,\dots , \label{lit}
\end{equation}
where $a_k$ and $b_k$ are chosen so that $u \in C^0 (D)$ (in particular, this means
that $a_0 = 0$) and $u$ satisfies the restricted mean value property with respect to
discs for all points on each circumference $|x| = 1-2^{-k}$ (it is obvious that this
property holds elsewhere). For the latter purpose it is sufficient to require that
$a_1, a_2, \dots$ have alternating signs and $|a_k|$ grows sufficiently fast with
$k$. It is clear that $u$ defined by \eqref{lit} is not harmonic in $D$.

\vskip7pt {\bf 3.3. On harmonicity of harmonically dominated functions.} Presumably,
the brief note \cite{V1} by Veech was the first publication, in which condition (B) was
used together with the assumption that the absolute value of the function under
consideration is majorized by a positive harmonic function. The result announced in
\cite{V1} (its improved version was proved in \cite{V2}) we formulate keeping the
original notation.

\begin{theorem}
Let $\Omega$ be a bounded Lipschitz domain in the plane, and let $f$ be a Lebesgue
measurable function on $\Omega$ such that $|f(x)| \leq g(x)$, $x \in \Omega$, for
some positive harmonic function $g$ on $\Omega$. If for each $x \in \Omega$ there is
a disc contained in $\Omega$ and centered at $x$ over which the average of $f$ is $f
(x)$, and if $\delta (x)$, the radius of this disc, as a function of $x$ is bounded
away from $0$ on compact subsets of\/ $\Omega$, then $f$ is harmonic.
\end{theorem}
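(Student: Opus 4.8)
The statement is a converse mean value theorem, so my first instinct is to imitate the maximum–principle proof of Theorem~3.1: if the ratio $f/g$ attained its supremum $M$ at an interior point $x_0$ nearest to $\partial\Omega$, the averaging identity over the disc $B_{\delta(x_0)}(x_0)$ would force $f/g\equiv M$ on that disc, hence at points strictly closer to the boundary, a contradiction. The difficulty is that $f$ is merely measurable, so neither ``the value at a single point'' nor ``the nearest point of a level set'' is meaningful; the domination $|f|\le g$ by a \emph{positive} harmonic function is exactly what is meant to repair this. The plan, therefore, is to encode the averaging identity probabilistically and to run the comparison through martingales rather than pointwise.

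\textbf{Step 1 (martingale setup).} I would define a Markov chain by $X_0=x$ and by letting $X_{n+1}$ be uniformly distributed on the disc $B_{\delta(X_n)}(X_n)\subset\Omega$. The hypothesis on averages says precisely that $\mathbb E\,[\,f(X_{n+1})\mid X_n]=f(X_n)$, so $\{f(X_n)\}$ is a martingale; and since $g$ is harmonic it obeys the volume mean value property of Theorem~1.2 over \emph{every} disc in $\Omega$, whence $\{g(X_n)\}$ is a \emph{positive} martingale and $|f(X_n)|\le g(X_n)$ for all $n$. In particular $f(X_n)\in L^1$, so the martingale is legitimate.

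\textbf{Step 2 (escape to the boundary).} I would next show that $X_n$ converges almost surely to a point $X_\infty\in\partial\Omega$. The positive martingale $g(X_n)$ converges a.s.; moreover $\mathbb E[(g(X_{n+1})-g(X_n))^2\mid X_n]$ equals the variance of $g$ over $B_{\delta(X_n)}(X_n)$, which stays bounded below by a positive constant as long as $X_n$ remains in a compact set $K$, since there $\delta\ge\delta_0>0$ by hypothesis and a nonconstant harmonic $g$ genuinely oscillates over discs of radius $\ge\delta_0$. Finiteness of the energy of $g(X_n)$ stopped on exit from $K$ then forces the walk to leave every compact subset almost surely; since $\delta(x)\le\mathrm{dist}(x,\partial\Omega)$ the steps shrink as the boundary is approached, and a standard argument upgrades ``leaves every compact set'' to genuine convergence of $X_n$ to a boundary point.

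\textbf{Step 3 ($g$-transform and conclusion).} Performing a Doob transform with the weight $g$ yields a probability $\mathbb P^{g}_x$ under which $\phi:=f/g$ is a martingale bounded by $1$; bounded convergence gives $\phi(X_n)\to\phi_\infty$ a.s.\ and in $L^1$, and undoing the transform returns the representation
\[
f(x)=g(x)\,\mathbb E^{g}_x[\phi_\infty].
\]
The hard part, and the real content of the theorem, is the final identification: one must prove that the tail limit $\phi_\infty$ is in fact a function of the exit point $X_\infty$ alone --- a Fatou/zero-one law for the transformed walk, which is where the Lipschitz regularity of $\partial\Omega$ enters, guaranteeing that boundary points are regular and thus ``seen'' by the process. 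Granting this, $f(x)=g(x)\int_{\partial\Omega}\phi_\infty\,\D\omega^{g}_x$, and because the $g$-transform intertwines the Laplacian via $\Delta(g\,w)=g\,L^{g}w$, the function $x\mapsto g(x)\,\mathbb E^{g}_x[\phi_\infty]$ is harmonic; hence $f$ is harmonic in $\Omega$. I expect Step~2 and the zero-one law of Step~3 to be the two genuine obstacles, the remaining estimates being routine.
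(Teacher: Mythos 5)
The first thing to know is that the survey contains no proof of this statement: it is Theorem~3.2, quoted from Veech \cite{V1,V2}, and the author remarks only that Veech's proof ``relies heavily on probabilistic methods'' --- a zero-one law for a class of random walks, as the title of \cite{V2} indicates --- while a purely analytic proof of an analogous result is credited to Hansen and Nadirashvili \cite{HN}. So your proposal is not an alternative to anything in the paper; it is an attempted reconstruction of the cited original, and its skeleton (disc-averaging walk, martingale property from the averaging hypothesis, Doob $g$-transform, escape to the boundary, zero-one law) is indeed the right one. But it has a genuine gap beyond the two obstacles you flag yourself.

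The gap is your closing deduction in Step~3. The exit law $\omega^g_x$ of the transformed walk is \emph{not} the $g$-harmonic measure of $\Omega$: the one-step kernel of your chain is the averaging operator over $B_{\delta(x)}(x)$, not a diffusion generator, so the intertwining identity $\Delta(gw)=g\,L^g w$ has no bearing on integrals against $\omega^g_x$. Worse, the claim you need is circular. By the Markov property, any function of the form $u(x)=g(x)\,\mathbb{E}^g_x[\psi(X_\infty)]$ automatically satisfies
\[
u(x)=\frac{1}{|B_{\delta(x)}(x)|}\int_{B_{\delta(x)}(x)}u(y)\,\D y ,
\qquad |u|\leq \|\psi\|_\infty\, g \quad \mbox{in} \ \Omega ,
\]
so the assertion ``functions of this form are harmonic'' is an instance of the very theorem being proved, not a routine estimate. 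A correct completion must compare with a genuinely harmonic object: granted the zero-one law $\phi_\infty=\psi(X_\infty)$, let $w$ be the bounded solution of the $g$-Dirichlet problem with data $\psi$ (its existence on a bounded Lipschitz domain comes from relative Fatou/Poisson theory --- this, rather than mere regularity of boundary points, is where the Lipschitz hypothesis earns its keep); then $w(X_n)$ is a second bounded $P^g$-martingale, one proves a Fatou-type theorem along the walk's paths giving $w(X_n)\to\psi(X_\infty)$ almost surely, and only then does $L^1$-convergence of both martingales force $\phi\equiv w$, that is $f=gw$ harmonic. This path-Fatou step is comparable in difficulty to the zero-one law itself, so ``the remaining estimates being routine'' is an underestimate.

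Two further repairs are needed. First, your Step~2 is carried out under $P$, but Step~3 requires $X_n\to X_\infty\in\partial\Omega$ under $P^g$; since $g(X_n)$ is in general not uniformly integrable, $P$ and $P^g$ are typically mutually singular on the tail $\sigma$-field (take $g$ to be a Poisson kernel: under $P^g$ the walk is conditioned to exit at a single boundary point, an event of $P$-probability zero), so nothing transfers and the convergence must be proved afresh under $P^g$ --- for instance via the positive $P^g$-martingales $1/g(X_n)$ and $\big(x_i+C\big)(X_n)/g(X_n)$; this also cures the vacuousness of your variance bound when $g$ is constant (a case the theorem certainly covers), since these then reduce to bounded coordinate martingales. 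Second, the chain is not even well defined unless $x\mapsto\delta(x)$ is Lebesgue measurable, which the statement does not grant; a measurable selection of admissible radii has to be made before any of the probability can start.
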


It is clear straight out of the title of \cite{V2} that the proof of this theorem
given by Veech relies heavily on probabalistic methods. Purely analytic proof of an
analogous result was obtained by Hansen and Nadirashvili in their seminal article
\cite{HN}. To outline their approach we begin with describing the required notation
and definition.

In what follows, $h$ is a fixed harmonic function on a bounded domain $D \subset
\RR^m$, for which the inequality $h \geq 1$ holds. A function $f$ on $D$ is called
$h$-bounded if there exists a constant $c > 0$ such that $|f| \leq c h$ throughout
$D$.

\begin{definition}
Let a positive function $r$ on $D$ be such that $r (x) \leq \rho (x)$, where $\rho
(x)$ is the distance from a point $x \in D$ to $\partial D$. A Lebesgue measurable
function $f$, which is $h$-bounded in $D$, is said to be $r$-median
provided\\[-3mm]
\[ f (y) = \frac{1}{|B_{r (y)} (y)|} \int_{B_{r (y)} (y)} \!\! f (x) \, \D x \quad 
\mbox{for every} \ x \in D .
\]
As in \eqref{S1}, by $|\cdot|$ the Lebesgue measure of the corresponding set is
denoted.
\end{definition}

\noindent Now we are in a position to formulate results proved in \cite{HN}.

\begin{theorem}
Let $D$ be a bounded domain in $\RR^m$, $m \geq 2$, in which a harmonic function $h
\geq 1$ is given, and let $r$ be a function described in Definition 3.1, then the
following two assertions are~true.

{\rm (i)} If $u \in C^0 (D)$ is an $h$-bounded and $r$-median function, then $u$ is
harmonic in $D$.

{\rm (ii)} If $u$ is a Lebesgue measurable, $h$-bounded and $r$-median function in
$D$, then $u$ is harmonic 

\hskip16pt there provided $r$ is bounded away from $0$ on every compact subset of
$D$.
\end{theorem}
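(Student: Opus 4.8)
The plan is to run a maximum-principle argument for the ratio $u/h$, exactly as in the proof of Theorem 3.1, and to isolate the single place where solvability of the Dirichlet problem entered there so as to replace it by the $h$-boundedness hypothesis. First I would record that harmonic functions, and in particular $h$, obey the volume mean value property of Theorem 1.2, so $h$ is itself $r$-median for the given $r$ and hence so is every combination $\lambda h - u$. Setting $M = \sup_D(u/h)$, I claim that if this supremum is attained at an interior point $x_0$ then $u\equiv Mh$. Indeed, the $r$-median property together with $u\le Mh$ gives
\[
M\,h(x_0)=u(x_0)=\frac{1}{|B_{r(x_0)}(x_0)|}\int_{B_{r(x_0)}(x_0)}u\,\D x\le\frac{M}{|B_{r(x_0)}(x_0)|}\int_{B_{r(x_0)}(x_0)}h\,\D x=M\,h(x_0),
\]
the last equality being the volume mean value property for $h$; equality forces $u=Mh$ almost everywhere, hence everywhere on $B_{r(x_0)}(x_0)$ by continuity, so $\{u=Mh\}$ is open as well as closed and connectedness of $D$ yields $u\equiv Mh$. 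Running the same argument on $\inf_D(u/h)$, I conclude that if \emph{either} extremum of $u/h$ is attained inside $D$, then $u$ is a constant multiple of the harmonic function $h$ and we are done.

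It remains to treat the case in which neither extremum is attained in $D$, so both are only approached along sequences tending to $\partial D$. This is precisely the configuration that Littlewood's example in Section 3.2 shows cannot be ruled out by the restricted mean value property alone, and it is here that $h$-boundedness must do the work that Dirichlet solvability did in Theorem 3.1. The idea is to compare $u$ with the harmonic function $v$ carrying the same boundary behaviour: since $v$ is harmonic it is $r$-median, so $w=u-v$ is an $h$-bounded $r$-median function with $w/h\to 0$ at $\partial D$. Applying the dichotomy above to $w$ then forces $w\equiv 0$, because in the attained case $w$ is a multiple of $h$ that the boundary relation kills, while in the non-attained case $\sup(w/h)$ and $\inf(w/h)$ are boundary-approached and hence both zero.

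The main obstacle is that $u\in C^0(D)$ need not extend continuously to $\overline D$ and $D$ need not be regular, so that the comparison function $v$ and the assertion $w/h\to 0$ at $\partial D$ are unavailable from solvability and must be produced by hand; this is the analytic heart of the Hansen--Nadirashvili argument. I would realize $v$ through a balayage (reduced-function) construction on an exhausting sequence of regular subdomains and pass to the limit, using the uniform bound $|u|\le c\,h$ to keep the approximating family under control and to show that the limit inherits the correct relation to $u$ at the boundary; the same bound guarantees, via the invariance of $u$ and of $h$ under the averaging map $x\mapsto A(\,\cdot\,,x,r(x))$, that no mass escapes except to $\partial D$. This is the step where Veech's probabilistic proof takes its shortcut: the averaging map defines a Markov chain for which $u$ and $h$ are invariant and $u/h$ a bounded martingale under the Doob $h$-transform, whose convergence delivers the conclusion directly.

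Finally I would separate the two forms of the hypothesis. For assertion~(i) the continuity of $u$ is exactly what permits the radii $r(x)$ to shrink to $0$ without loss of control, so the limiting construction applies with no lower bound on $r$. For assertion~(ii), where $u$ is merely measurable, the pointwise extremum step must be read in the essential-supremum sense, and the hypothesis that $r$ is bounded away from $0$ on compact subsets takes over the role of continuity: averaging over balls of a definite minimal radius is a smoothing operation, which regularizes the relevant averaged quantities and supplies the compactness needed for the limiting construction, after which the same scheme yields harmonicity of $u$.
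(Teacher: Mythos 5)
Your opening dichotomy is sound and is the standard Burckel-type argument: if $\sup_D(u/h)$ (or the infimum) is attained at an interior point $x_0$, then the $r$-median property of $u$ together with the volume mean value property of $h$ forces $u=Mh$ a.e.\ on $B_{r(x_0)}(x_0)$, hence everywhere there by continuity, and openness-plus-closedness of $\{u=Mh\}$ with connectedness of $D$ gives $u\equiv Mh$. But everything after that point is not a proof; it is a restatement of the difficulty. The comparison function $v$ ``carrying the same boundary behaviour as $u$'' is not available: $u$ is only continuous (in (ii), merely measurable) \emph{inside} $D$, it has no trace on $\partial D$, and $D$ is an arbitrary bounded domain with possibly highly irregular boundary, so neither your balayage-and-exhaustion limit nor the assertion $w/h\to 0$ at $\partial D$ can even be formulated, let alone proved, from the hypotheses. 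Even granting a candidate $v$, your closing step --- that boundary-approached extrema of $w/h$ must be zero --- needs a minimum principle (Phragm\'en--Lindel\"of relative to $h$) on an irregular domain, which is exactly what is unavailable in this generality and which you never supply. The gap is not a technicality: the case where the extrema of $u/h$ are not attained in $D$ \emph{is} the theorem; Littlewood's example in Sect.~3.2 shows how badly the restricted mean value property alone behaves near $\partial D$, and $h$-boundedness has to be exploited by genuinely global machinery, not by an exhaustion limit.

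For calibration: the paper itself gives no proof of this theorem. It cites Hansen and Nadirashvili \cite{HN} and stresses that their argument requires the Martin compactification, the analysis of a Schr\"odinger equation with singularity at the boundary, and transfinite sweeping of measures. The last item shows concretely why your ``no mass escapes except to $\partial D$'' step fails: iterating the averaging map $x\mapsto A(u,x,r(x))$ (the Markov kernel you mention) does not stabilize after countably many steps on a general domain, and controlling where the swept mass accumulates requires transfinite induction combined with Martin-boundary arguments. Likewise, your remark for assertion (ii) --- that a positive lower bound for $r$ on compacts makes averaging ``a smoothing operation'' supplying the needed compactness --- is a heuristic, not an argument. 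In short: the interior-maximum half of your proposal is correct but elementary; the boundary half, which carries the entire content of the theorem, is missing.
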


\noindent Assertion (i) of this theorem establishes, in particular, that the
restricted mean value property implies harmonicity for bounded continuous functions,
whereas assertion (ii) extends this fact to Lebesgue measurable functions at the
expense of an extra assumption imposed on diameters of balls in Definition~3.1. The
latter imposes a geometrical restriction on the domain $D$.

 To
give an idea how complicated is the proof of Theorem~3.3 it is sufficient to list
some of different conceptions used by Hansen and Nadirashvili in their
considerations: (1)~the Martin compactification; (2) the Schr\"odinger equation with
singularity at the boundary (it is investigated in \cite{HN}, Sect.~2; (3)
transfinite sweeping of measures (it is studied in \cite{HN}, Sect.~3). In
\cite{HN2}, the assertions of Theorem~3.3 are extended to the case of more general
mean value properties and to domains which are not necessarily bounded; namely, it
is required that $D \neq \RR^m$ for $m \geq 3$, whereas the complement of $D$ is a
non-polar set when $m=2$ (see \cite{H}, Ch.~7, Sect.~1, for the definition of a
polar set).

\vskip7pt {\bf 3.4. On two conjectures related to restricted mean value
properties.} In his booklet \cite{Lit} published in 1968, Littlewood posed several
questions among which was the following one (it is usually referred to as the one
circle problem). Let $D = B_1 ({\bf 0}) \subset \RR^2$ and let $u \in C^0 (D)$ be
bounded on $D$. Is $u$ harmonic if for every $x \in D$ there exists $r (x) \in (\, 0
, 1-|x| \,]$ such that the equality
\begin{equation}
u (x) = \frac{1}{2 \pi r (x)} \int_{\partial B_{r (x)} (x)} \!\! u \, \D S \ \
\mbox{holds ?} \label{oc}
\end{equation}
Littlewood conjectured that the answer to this question is `No' and this was
established by Hansen and Nadirashvili \cite{HN3}, who proved the following
assertion.

\begin{theorem}
Let $D = B_1 ({\bf 0}) \subset \RR^2$ and let $\alpha \in (0, 1)$. Then there exists
$u \in C^0 (D)$ which attains values in $[0, 1]$ and for every $x \in D$ satisfies
equality \eqref{oc} with some $r (x)$ belonging to $(\, 0 , \alpha [\, 1-|x| \,]
\,)$, but is not harmonic in~$D$.
\end{theorem}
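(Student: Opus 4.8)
The natural engine for this problem is probabilistic. To a yet-to-be-chosen radius function $r$ I would attach the Markov jump chain $\{X_n\}$ on $D$ which, from a point $y$, moves to a point distributed uniformly on $\partial B_{r(y)}(y)$; then \eqref{oc} says precisely that $u(X_n)$ is a bounded martingale, i.e. that $u$ is \emph{invariant} for the chain. For the unrestricted mean value property the analogous object is Brownian motion, which reaches $\partial D$ almost surely and thereby forces every bounded invariant function to be classically harmonic. The entire point of Littlewood's conjecture is therefore that, with the radius left to our choice, one can design $r$ so that the jump chain is \emph{not} faithful to harmonic measure, and consequently admits a bounded, continuous, nonconstant invariant function that is not harmonic. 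The severe constraint $r(x)<\alpha(1-|x|)$ is the crucial lever: each jump covers only a small fraction of the remaining distance to the boundary, which is exactly what lets one slow the chain down and keep nontrivial invariant functions alive.

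Concretely, I would exhaust $D$ by the nested annuli $A_k=\{\,1-2^{-k}\le|x|\le 1-2^{-k-1}\,\}$ and build $r$ and $u$ together by recursion on $k$, in the spirit of the Littlewood example of Section~3.2 but engineered to stay bounded. On each $A_k$ the radius is taken so small that the averaging circle $\partial B_{r(x)}(x)$ remains in a thin band around the circle through $x$, comfortably below the ceiling $\alpha(1-|x|)$; the freedom in choosing $r(x)$ at each scale is spent imposing the averaging identity \eqref{oc} while confining the values to $[0,1]$. The target function would be produced as the pointwise martingale limit $u=\lim_n u(X_n)$, equivalently as the probability of a tail event for the chain — say the event that the chain is eventually captured in a prescribed subfamily of the annuli. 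Such a quantity automatically takes values in $[0,1]$ and is invariant, hence satisfies \eqref{oc}; continuity on $D$ then has to be arranged by making these capture probabilities vary continuously across the gluing circles $|x|=1-2^{-k}$.

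Non-harmonicity I would verify directly rather than through the martingale limit. Since every harmonic function satisfies the full identity \eqref{A} for all admissible balls by Theorem~1.1, it suffices to exhibit a single ball on which \eqref{A} fails: by design $u$ reproduces its value only under the one specially chosen radius, and the surviving interannular oscillation is what makes its genuine spherical average at some center differ from $u$ at that center. (Were $u$ harmonic, the strong maximum principle of Theorem~1.3 would moreover severely constrain its level sets, giving a second, independent obstruction one can play against the constructed oscillation.)

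The main obstacle, and the whole technical heart of the argument, is the simultaneous bookkeeping across all scales $k$. The radius function must obey $r(x)<\alpha(1-|x|)$ at \emph{every} point; the resulting invariant $u$ must be continuous on all of $D$ and valued in $[0,1]$; and yet $u$ must remain nonconstant, hence non-harmonic. Continuity and the strict radius bound pull directly against the need for persistent oscillation, because shrinking radii make the chain drift toward $\partial D$ ever more slowly and threaten to average the bumps away. Controlling the capture probabilities uniformly in $k$, so that the oscillation neither decays to a harmonic profile nor breaks continuity at the junction circles, is where I expect the construction of Hansen and Nadirashvili to demand real work.
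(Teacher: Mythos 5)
The paper itself contains no proof of this theorem: it is a survey item credited to Hansen and Nadirashvili \cite{HN3}, and the text only reports how the proof goes --- the result is deduced as a corollary of a stronger theorem about means over \emph{annuli}, a delicate random walk is used to describe the function, and Hansen's later simplification \cite{Ha1} rests on Talagrand's projection theorem (Proposition~3.1). Your probabilistic framing (the jump chain attached to $r$, reading \eqref{oc} as invariance of $u$ under the chain, producing $u$ as the probability of a tail event) is the right dictionary and agrees in spirit with what the paper reports about \cite{HN3}. But what you have written is a plan, not a proof: the construction of $r$ and $u$, which is the entire content of the theorem, is explicitly deferred to ``real work.''

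Moreover, the gap is not merely deferred labor; your outline omits the one idea that makes the theorem true at all. Nothing in your mechanism distinguishes averages over circles from averages over discs: the chain, the martingale/invariance dictionary, the tail-event recipe, and the annulus-by-annulus recursion apply verbatim if \eqref{oc} is replaced by the average over the disc $B_{r(x)}(x)$. For disc averages, however, the conclusion you aim at is \emph{false}: by Theorem~3.3(i) of this very paper (Hansen--Nadirashvili \cite{HN}, with $h \equiv 1$), a bounded continuous $r$-median function on a bounded domain is necessarily harmonic, for any admissible radius function. Hence no argument of the shape you describe can be completed without an ingredient that exploits the singularity of arc-length measure on a circle; your sketch has none, whereas the cited proofs do --- means over thin annuli in \cite{HN3}, from which the circle case follows by the intermediate value theorem applied to $\rho \mapsto L(u,x,\rho)$, and Talagrand's compact set with Lebesgue-null oblique projections in \cite{Ha1}. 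Modelling your recursion on Littlewood's example of Section~3.2 compounds the problem: that example is radial, unbounded, and is a \emph{disc}-average example, and by Theorem~3.3 no bounded version of it can exist. Two further points: your non-harmonicity step presupposes that the constructed $u$ retains ``interannular oscillation,'' which is precisely what must be shown to coexist with \eqref{oc}, continuity and boundedness, so it cannot be taken as given; and the bound $r(x) < \alpha\,(1-|x|)$ is a constraint that \emph{strengthens} the theorem rather than a lever that helps the construction --- slowing the chain down forces nothing by itself, as the disc case shows.
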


\noindent In fact, this result is a corollary of another theorem in which $u$ is
averaged not over a circumference as it takes place in formula \eqref{oc}, but over
an annulus centred at $x$ and enclosed in $D$. It is also worth mentioning that a
certain random walk is applied for describing the function, whose existence is
asserted in this theorem. The construction `is very delicate' as is emphasized in
the subsequent paper \cite{Ha1}, where it is substantially simplified. For this
purpose a result due to Talagrand \cite{Ta} is used, it concerns the Lebesgue
measure of projections of a certain two-dimensional set on a straight line.

Let $\alpha \in [-\pi /2, \pi /2]$, by $p_\alpha : \RR^2 \to \RR$ the projection
operator is denoted mapping onto the $x_1$-axis parallel to the line going through
the origin and forming the angle $\alpha$ with this axis. The following assertion
(it is used in \cite{Ha1} to simplify the proof of Theorem~3.4) is a special case of
Theorem~1 proved in \cite{Ta}.

\begin{proposition}
Let $a, b \in \RR$, $a < b$. Then there exists a compact $K \subset [0, 1] \times
[a, b]$ such that the orthogonal projection of $K$ on the $x_1$-axis coincides with
$[0, 1]$, whereas the Lebesgue measure of $p_\alpha (K) \subset \RR$ is equal to
zero for every $\alpha \in (-\pi /2, \pi /2)$.
\end{proposition}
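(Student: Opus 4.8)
The plan is to construct $K$ by the classical \emph{Venetian-blind} procedure, realising it as a Hausdorff limit of finite unions of short segments. Observe first that the orthogonal projection onto the $x_1$-axis is precisely $p_{\pi/2}$, so the two requirements read $p_{\pi/2}(K)=[0,1]$ and $|p_\alpha(K)|=0$ for every $\alpha\in(-\pi/2,\pi/2)$; for $\alpha\neq0$ one has the explicit formula $p_\alpha(x_1,x_2)=x_1-x_2\cot\alpha$. I would place $K$ inside a sub-strip $[0,1]\times[y_0,y_1]\subset[0,1]\times[a,b]$ avoiding the line $x_2=0$, so that the degenerate value $\alpha=0$ (projection along horizontal lines) yields the empty set and may be discarded.

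The first step is a one-step lemma. Given a short segment $S$ and a target direction $\theta\neq0$, replace $S$ by finitely many parallel ``slats'' $S_1,\dots,S_N$, each parallel to the direction of angle $\theta$, all lying within a prescribed distance $\delta$ of $S$ and laid so that their orthogonal $x_1$-shadows tile that of $S$. Because every slat is parallel to $\theta$, its image under $p_\theta$ is a single point, whence $p_\theta\bigl(\bigcup_i S_i\bigr)$ is finite and so null; since the image of a $\theta$-segment of length $\ell$ under $p_\phi$ has length $\ell\,|\sin(\phi-\theta)|/|\sin\phi|$, the same smallness persists throughout an open neighbourhood $U$ of $\theta$. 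The tiling of the shadows guarantees that the orthogonal projection is preserved exactly, while in every other direction the projection grows by at most a controlled amount, staying within a $\delta$-neighbourhood of the former one.

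Next I would iterate. Starting from the horizontal segment $E_0=[0,1]\times\{c\}$, which already satisfies $p_{\pi/2}(E_0)=[0,1]$, fix a sequence $\{\theta_n\}\subset(-\pi/2,\pi/2)\setminus\{0\}$ visiting every open subinterval infinitely often, and at stage $n$ apply the lemma to each slat of $E_{n-1}$ with target $\theta_n$ and perturbation $\delta_n$, the $\delta_n$ being summable and decaying fast enough to dominate both the growing number of slats and the Lipschitz constants $1/|\sin\theta_k|$ of the maps $p_{\theta_k}$ already treated. The resulting $E_0,E_1,E_2,\dots$ form a Cauchy sequence in the Hausdorff metric whose limit is the sought compact set $K$. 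Fullness of the orthogonal projection passes to the limit by compactness: for each $x\in[0,1]$ one has $(x,y_n)\in E_n$, and a convergent subsequence furnishes a point $(x,y)\in K$, so $p_{\pi/2}(K)=[0,1]$.

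The crux, and the step I expect to be the main obstacle, is to confirm that \emph{every} oblique projection is null, not merely those in a dense or residual set of directions. For fixed $\alpha\in(-\pi/2,\pi/2)\setminus\{0\}$ there are infinitely many stages $n$ with $\alpha\in U_n$, at which $|p_\alpha(E_n)|$ was forced below some $\varepsilon_n\to0$; the subsequent perturbations enlarge $p_\alpha$ by at most $|\sin\alpha|^{-1}\sum_{k>n}\delta_k$, so $p_\alpha(K)$ lies in a neighbourhood of $p_\alpha(E_n)$ of that radius. Letting $n\to\infty$ along these stages, with $\varepsilon_n\to0$ and the tail sums tending to $0$, yields $|p_\alpha(K)|=0$. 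Reconciling the three demands—exact preservation of the orthogonal projection, summable Hausdorff control, and the neighbourhood-covering that upgrades a dense family of annihilated directions to \emph{all} of $(-\pi/2,\pi/2)$—is exactly the quantitative bookkeeping at the heart of Talagrand's theorem; once it is arranged, the assertion follows as above.
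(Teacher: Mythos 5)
The paper itself offers no proof of this proposition: it is quoted as a special case of Theorem~1 of Talagrand \cite{Ta}, so the only question is whether your sketch stands on its own as a proof. It does not, and the gap is precisely the point you flag and then defer. Your construction makes $|p_\alpha(E_n)|$ small on a neighbourhood $U_n$ of $\theta_n$, and for the limit set you need every fixed $\alpha \neq 0$ to lie in $U_n$ for \emph{infinitely many} $n$; a single visit is useless, because after the last visit the bound $\varepsilon_{n_0} + 2M_{n_0}|\sin\alpha|^{-1}\sum_{k>n_0}\delta_k$ is a fixed positive number. Choosing $\{\theta_n\}$ to visit every subinterval infinitely often does not give this: the $U_n$ are neighbourhoods whose widths you never bound from below, and in a naive estimate they shrink like $\varepsilon_n/(M_n\ell_n)$, where the number of slats $M_n$ grows super-geometrically (each refinement forces $\delta_n \ll 1/M_{n-1}$, hence very many short sub-slats). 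If the widths $|U_n|$ are summable, then $\bigcup_{n\geq N}U_n$ fails to cover $(-\pi/2,\pi/2)\setminus\{0\}$ for large $N$, and the scheme only yields null projections in a dense (or residual) set of directions --- a strictly weaker statement. Writing that the reconciliation ``is exactly the quantitative bookkeeping at the heart of Talagrand's theorem'' is circular: that bookkeeping \emph{is} the theorem, and it is the one step a proof cannot outsource.

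What closes the gap --- and is absent from your sketch --- is the observation that the slats of $E_n$ have horizontal shadows tiling $[0,1]$, so their \emph{total length} is $1/\cos\theta_n$ no matter how large $M_n$ is. Hence $|p_\alpha(E_n)| \leq |\sin(\alpha-\theta_n)|/(\cos\theta_n\,|\sin\alpha|)$, so for a prescribed tolerance $\varepsilon$ the width of $U_n$ is bounded below by a quantity depending only on $\varepsilon$, $\theta_n$ and a lower bound for $|\sin\alpha|$, \emph{not} on $M_n$. Consequently, finitely many consecutive stages with tolerance $1/r$ cover any compact direction set $\{\,1/j \leq |\alpha| \leq \pi/2 - 1/j\,\}$, and interleaving such finite rounds over $r,j \to \infty$ puts every $\alpha \neq 0$ into infinitely many $U_n$ with tolerances tending to $0$. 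Combined with the correct form of your degradation estimate --- namely $|p_\alpha(K)| \leq |p_\alpha(E_n)| + 2M_n|\sin\alpha|^{-1}\sum_{k>n}\delta_k$, where the factor $M_n$ counts the intervals of $p_\alpha(E_n)$ (a $\rho$-neighbourhood of a small set need not be small in measure; your phrase ``enlarge $p_\alpha$ by at most $|\sin\alpha|^{-1}\sum_{k>n}\delta_k$'' conflates radius with measure) and the $\delta_k$, $k>n$, are chosen after $M_n$ is known --- this yields $|p_\alpha(K)|=0$ for every $\alpha \neq 0$. The remaining ingredients of your sketch are sound: the one-step blind lemma, the exact preservation of the vertical shadow and its passage to the Hausdorff limit, and the confinement of $K$ to a substrip off the $x_1$-axis to dispose of the degenerate direction $\alpha = 0$.
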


The second conjecture was formulated by Veech \cite{V3} in 1975. It involves the
notion of ad\-missible function on a domain $D \subset \RR^m$, $m \geq 2$, by which
a positive function $r$ is understood such that $B_{r (x)} (x) \subset D$ for every
$x \in D$.

\begin{conjecture}
Let $D$ be a bounded domain in $\RR^m$ and let $r$ be an admissible function on $D$
which is locally bounded away from zero, that is, it satisfies the inequality
$\inf_{x \in K} r (x) > 0$ for each compact $K \subset D$. Then every non-negative,
$r$-median function on $D$ is harmonic.
\end{conjecture}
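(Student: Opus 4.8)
The plan is to reduce Conjecture 3.1 to assertion (ii) of Theorem~3.3. The two statements share the hypotheses that $u$ is $r$-median and that $r$ is locally bounded away from zero; the only gap is that Theorem~3.3(ii) demands $h$-boundedness (that is, $|u| \le c h$ for a fixed harmonic $h \ge 1$) whereas the conjecture assumes merely $u \ge 0$. It therefore suffices to prove that a non-negative $r$-median function admits a harmonic majorant: once a harmonic $h \ge 1$ with $u \le c h$ is produced, Theorem~3.3(ii) immediately yields harmonicity of $u$. The whole difficulty is thus concentrated in constructing this majorant, i.e.\ in controlling the growth of $u$ near $\partial D$.

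I would first record the elementary structural facts. Since the constant $1$ is trivially $r$-median, the averaging operator $(Tf)(y) = |B_{r(y)}(y)|^{-1}\int_{B_{r(y)}(y)} f \, \D x$ is a positivity-preserving Markov operator with $T1 = 1$ and $Tu = u$. Local integrability of $u$ is immediate: for each $y$ the identity $\int_{B_{r(y)}(y)} u \, \D x = u(y)\,|B_{r(y)}(y)| < \infty$ shows that every admissible ball carries finite mass, and any compact $K \subset D$ is covered by finitely many of the balls $B_{r(y)}(y)$, $y \in K$, whence $\int_K u \, \D x < \infty$. Hence $u \in L^1_{\mathrm{loc}}(D)$ and $T$ is well defined on $u$. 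By contrast, local \emph{boundedness} of $u$ is not elementary, precisely because the balls $B_{r(y)}(y)$ may reach almost to $\partial D$, so that large mass of $u$ can hide near the boundary.

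The probabilistic mechanism I would exploit is the Markov chain $X_0 = x$, $X_{n+1} \sim \mathrm{Unif}(B_{r(X_n)}(X_n))$ generated by $T$. Because $Tu = u$, the process $M_n = u(X_n)$ is a non-negative martingale, so $M_n \to M_\infty$ almost surely. Using that $r$ is bounded below on compact sets, one shows that the chain eventually leaves every compact subset of $D$ and converges to $\partial D$, so that the tail of $(X_n)$ encodes a boundary point. The sought harmonic majorant should then be assembled from the exit behaviour of this chain (for instance as a suitable Green potential or a Poisson-type integral of the limiting hitting data), and harmonicity of $u$ would in fact follow directly from a representation $u(x) = \mathbb{E}_x[M_\infty]$.

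The main obstacle is exactly the passage from $\mathbb{E}_x[M_n] = u(x)$ to $u(x) = \mathbb{E}_x[M_\infty]$: non-negativity alone yields only Fatou's inequality $\mathbb{E}_x[M_\infty] \le u(x)$, and a strict defect here signals that part of the mass of $u$ escapes to $\partial D$, i.e.\ the failure of uniform integrability of the martingale. Ruling this out --- equivalently, producing the harmonic majorant and controlling the boundary growth of $u$ --- is where the heavy apparatus of Hansen and Nadirashvili (the Martin compactification and the transfinite sweeping of measures listed after Theorem~3.3) is expected to be indispensable. I would attempt it by sweeping the measures $u(X_n)\,\mathbb{P}_x(X_n \in \cdot)$ out towards $\partial D$ and showing that the resulting limit is carried by the boundary, which would furnish both the harmonic majorant needed for Theorem~3.3(ii) and, simultaneously, the harmonic representation of $u$.
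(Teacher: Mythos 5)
There is a fundamental problem here: the statement you are trying to prove is a \emph{conjecture} (Veech's conjecture of 1975), and it is false for $m \geq 2$. The paper does not prove it; immediately after stating it, the paper notes that Huckemann verified only a one-dimensional analogue, and that ``there are measurable and continuous counterexamples to Conjecture~3.1 for $m \geq 2$.'' Indeed, Theorem~3.5 of the paper (due to Hansen and Nadirashvili) exhibits, in the unit ball $D = B_1 (\mathbf{0}) \subset \RR^m$, strictly positive functions $u$ and $r$ in $C^0 (D)$ with $r (x) \leq \alpha (1 - |x|)$ such that $u$ is $r$-median but \emph{not} harmonic. Since $r$ is continuous and strictly positive, it is automatically bounded away from zero on every compact subset of $D$, and $u$ is certainly non-negative; so this is precisely a counterexample to the statement you set out to prove.

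Your proposal is valuable in one respect: it correctly isolates where the argument must break down. The reduction to Theorem~3.3(ii) would work \emph{if} every non-negative $r$-median function admitted a harmonic majorant, and you candidly flag that the passage from $\mathbb{E}_x [M_n] = u(x)$ to $u(x) = \mathbb{E}_x [M_\infty]$ --- equivalently, the uniform integrability of the martingale $u(X_n)$ --- is the crux. But this crux is not a technical gap to be filled by heavier machinery; it is exactly the point at which the conjecture fails. The Hansen--Nadirashvili construction (a random walk with a carefully chosen transition kernel, as the paper remarks after Theorem~3.5) arranges for mass of $u$ to escape to $\partial D$ in precisely the way Fatou's inequality permits, so that $u$ is a non-uniformly-integrable invariant function of the Markov operator $T$ and no harmonic majorant exists. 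The hypothesis of $h$-boundedness in Theorem~3.3(ii) is therefore not removable: it is the essential hypothesis separating the true theorem from the refuted conjecture, and no proof strategy of the kind you outline can succeed.
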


\noindent Huckemann \cite{Hu} demonstrated that an analogous, one-dimensional
assertion is true. However, there are measurable and continuous counterexamples to
Conjecture~3.1 for $m \geq 2$. Like that considered in Theorem~3.4, these examples
are based on properties of the random walk given by a certain transition kernel. We
restrict ourselves to formulating the following assertion similar to Theorem~3.4;
see \cite{HN4}.

\begin{theorem}
Let $D = B_1 ({\bf 0}) \subset \RR^m$, $m \geq 2$, and let $\alpha \in (0, 1]$. Then
there exist strictly positive functions $u$ and $r$ belonging to $C^0 (D)$ such that
$r (x) \leq \alpha (1-|x|)$ for all $x \in D$ and $u$ is $r$-median, but not
harmonic in~$D$.
\end{theorem}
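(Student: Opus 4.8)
The plan is to recast the $r$-median property as harmonicity for a discrete random walk and then to locate the counterexample in the gap between the walk's terminal position and its full tail behaviour. Given a candidate radius function $r$, I introduce the Markov operator $Pf(x)=|B_{r(x)}(x)|^{-1}\int_{B_{r(x)}(x)}f\,\D y$ and the associated chain $\{X_n\}$ that jumps from $X_n$ to a point chosen uniformly in $B_{r(X_n)}(X_n)$. Because $\alpha\le1$ gives $B_{r(x)}(x)\subset D$, the walk stays in $D$, and by Definition~3.1 a bounded measurable $u$ is $r$-median exactly when $u=Pu$, i.e.\ $u$ is harmonic for $\{X_n\}$. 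So the theorem reduces to producing a continuous, strictly positive, non-Laplace-harmonic fixed point of $P$ for a suitable continuous, strictly positive $r$ with $r(x)\le\alpha(1-|x|)$.

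Second, I would isolate the genuine obstruction, which also explains why Conjecture~3.1 is only barely false. Each coordinate of $X_n$ has conditional mean equal to the centre of the averaging ball, so $X_n$ is a bounded vector martingale and converges a.s.\ to some $X_\infty$; orthogonality of the increments gives $\sum E_x|X_{n+1}-X_n|^2<\infty$, so $r(X_n)\to0$, and strict positivity of $r$ then forbids convergence to an interior point, whence $|X_\infty|=1$ a.s. Moreover, by Theorem~1.2 every bounded Laplace-harmonic function already satisfies the ball mean value property, hence is $P$-harmonic, and if it is continuous up to $\partial D$ the martingale $h(X_n)$ converges to its boundary trace. Comparing with the Poisson representation forces the walk's exit law $\mu_x$ (the law of $X_\infty$ under $P_x$) to coincide with harmonic measure $\omega_x$. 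Consequently any bounded $P$-harmonic $u$ admitting continuous boundary values is the Poisson integral of those values and is therefore harmonic. The counterexample must thus oscillate near $\partial D$ without boundary limits: writing $L=\lim_n u(X_n)$ for the bounded martingale $u(X_n)$, one has $u(x)=E_x[L]$, and $u$ is non-harmonic precisely when $L$ is not $\sigma(X_\infty)$-measurable, i.e.\ when the tail field of $\{X_n\}$ is strictly larger than $\sigma(X_\infty)$.

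Third, the constructive heart of the argument is to design $r$ so that the walk records persistent asymptotic information invisible to its terminal position. I would foliate $D$ by concentric annular shells $A_k$ accumulating at $\partial D$ and choose $r$ on $A_k$ so that, each time a trajectory crosses from $A_k$ to $A_{k+1}$, it emits a $\{0,1\}$-valued signal whose value is asymptotically independent of the limiting angle of $X_\infty$; the signals are arranged to stabilise along almost every trajectory, defining a genuine tail event $A$ decoupled from $X_\infty$. This is exactly the random-walk/transition-kernel mechanism behind Theorem~3.4, and the decoupling of the recorded signal from the exit angle is where a Talagrand-type geometric input in the spirit of Proposition~3.2 enters, guaranteeing that the angular exit distribution is not disturbed while the signal is carried to the boundary. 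Setting $u(x)=c_0+P_x(A)$ with $c_0>0$ makes $u$ strictly positive and bounded, and the Markov property gives $u=Pu$, i.e.\ $u$ is $r$-median; continuity of $u$ follows from continuity of $r$ together with continuous dependence of the crossing signals on the shell-entry point.

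Finally, non-harmonicity of $u$ is read off from the second paragraph: since $L=c_0+\mathbf{1}_A$ is not a function of $X_\infty$, the representation $u(x)=E_x[L]$ cannot be a Poisson integral, so $u$ is not harmonic in $D$. The main difficulty, and the step I expect to consume almost all of the work, is the shell construction of $r$: one must simultaneously keep $r$ continuous, strictly positive and bounded by $\alpha(1-|x|)$, preserve the exactness of the ball average (which pins the short-range behaviour to be harmonic and forces the position martingale to the boundary), and yet smuggle a stable, angle-independent tail signal into almost every trajectory. Balancing these competing constraints is the delicate part, treated for the circle problem in \cite{HN3,Ha1} and, in the form required here, in \cite{HN4}.
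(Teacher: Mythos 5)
The paper itself gives no proof of this theorem: it is a survey, and Theorem~3.5 is quoted from Hansen and Nadirashvili \cite{HN4} with only the remark that the example ``is based on properties of the random walk given by a certain transition kernel.'' Your first two paragraphs are a correct and useful reconstruction of that framework: the equivalence of the $r$-median property with $Pu=u$ for the ball-averaging kernel, the bounded-martingale argument giving $X_n\to X_\infty\in\partial D$ a.s., and the identification of the exit law with harmonic measure via test functions $h\in C^0(\overline D)$ harmonic in $D$. This matches the approach the paper points to. But beyond that point the proposal has genuine gaps rather than a proof.

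First, the constructive heart (your third paragraph) is not a construction: ``emit a $\{0,1\}$-valued signal,'' ``asymptotically independent of the limiting angle,'' ``arranged to stabilise along almost every trajectory,'' ``decoupled from $X_\infty$'' are precisely the properties whose realization constitutes the theorem, and nothing in the proposal produces a continuous, strictly positive $r$ with $r(x)\le\alpha(1-|x|)$ having them; you have restated the goal in probabilistic language. (Also, the Talagrand projection result of Proposition~3.2 is the tool Hansen used in \cite{Ha1} to simplify the one-circle problem of Theorem~3.4; there is no indication it enters \cite{HN4}, so invoking it here is speculative.) Second, the final deduction of non-harmonicity is logically incomplete: from ``$L=c_0+\mathbf{1}_A$ is not $\sigma(X_\infty)$-measurable'' you cannot conclude that $u(x)=E_x[L]$ is not harmonic. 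If $u$ were bounded and harmonic it would be the Poisson integral of some $g\in L^\infty(\partial D)$, and to reach a contradiction you need $u(X_n)\to g(X_\infty)$ a.s., i.e.\ a Fatou-type theorem identifying the martingale limit along the chain's trajectories; this is nontrivial (the paths of the chain need not approach $\partial D$ non-tangentially) and is missing, so the asserted equivalence ``non-harmonic iff $L$ not a function of $X_\infty$'' is only proved in the direction you do not use. Third, continuity of $u(x)=c_0+P_x(A)$ is asserted to ``follow from continuity of $r$,'' but for the probability of a tail event of a discrete kernel this is exactly the delicate point: it is why measurable counterexamples to Veech's Conjecture~3.1 are far easier than continuous ones, and securing simultaneously the continuity and strict positivity of $u$ and $r$ together with the bound $r(x)\le\alpha(1-|x|)$ is the actual content of \cite{HN4}, which remains untouched here.
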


{\bf 3.5. The restricted mean value property for circumferences and Liouville's
theorem in two dimensions.} In the Mikhlin's textbook \cite{M} (see Ch.~12,
Sect.~4), the proof of the general Liouville's theorem, asserting that {\it a
harmonic function defined throughout $\RR^m$ and bounded above (or below) is
constant}, is based on Theorem~1.1 (the mean value property for spheres). However,
it occurs that the assumption about harmonicity in Liouville's theorem is
superfluous at least in the two-dimensional case. The following assertion shows that
it is sufficient to require the restricted mean value property for circumferences.

\begin{theorem}
Let a real-valued function $u \in C^0 (\RR^2)$ be bounded. If there exist a strictly
positive function $r$ on $\RR^2$ and a constant $M > 0$ such that $r (x) \leq |x| +
M$ whenever $|x| \geq M$, then $u$ is constant provided the equality\\[-3mm]
\begin{equation}
u (x) = \frac{1}{2 \pi} \int_0^{2 \pi} u \big( x + r (x) \, \E^{\ii t} \big) \, \D
t \label{3.6}
\end{equation}
holds for every $x \in \RR^2$.
\end{theorem}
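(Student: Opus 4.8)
This is a Liouville-type theorem for the restricted mean value property with respect to circumferences in $\mathbb{R}^2$. We must show that a bounded continuous $u$ satisfying the one-circle identity \eqref{3.6} at every point — with radii $r(x)$ allowed to grow almost linearly (like $|x| + M$) — must be constant.

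\textbf{Reformulation.} The identity \eqref{3.6} says exactly that $u = Tu$, where $T$ is the averaging operator $Tf(x) = \frac{1}{2\pi}\int_0^{2\pi} f\big(x + r(x)\,\E^{\ii t}\big)\,\D t$. Probabilistically, $u(x) = \mathbb{E}_x[u(X_1)]$ for the Markov chain $\{X_n\}$ on $\RR^2$ that jumps from $y$ to a uniformly distributed point of the circle $\partial B_{r(y)}(y)$; iterating, $u(x) = \mathbb{E}_x[u(X_n)]$ for every $n$. Since $u$ is bounded, $\{u(X_n)\}$ is a bounded martingale, so it converges almost surely and in $L^1$, and the task is to show the limit does not depend on the starting point, i.e. that $u$ is constant. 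The plan is to prove this by a maximum--principle propagation whose only nontrivial ingredient is a recurrence property of $\{X_n\}$ forced by the growth hypothesis $r(x)\le |x|+M$.

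\textbf{The analytic core (propagation of extremal values).} Put $M_0 = \sup_{\RR^2} u$ (finite by boundedness) and $v = M_0 - u \ge 0$; then $v$ is continuous, bounded, satisfies the same restricted mean value property, and $\inf v = 0$. Choose near-minimizers $x_n$ with $v(x_n) = \varepsilon_n \to 0$. Because $v\ge 0$ and $v(x_n)$ equals the average of $v$ over $\partial B_{r(x_n)}(x_n)$, a Chebyshev estimate shows $v \le \sqrt{\varepsilon_n}$ on all but a subset of the circle of arclength at most $2\pi\sqrt{\varepsilon_n}$. If the sequence $\{x_n\}$ stays bounded, a subsequence converges to a point $x^*$ with $v(x^*)=0$; then the average of the nonnegative continuous function $v$ over $\partial B_{r(x^*)}(x^*)$ vanishes, so $v\equiv 0$ on that entire circle. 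Repeating this along circles emanating from points where $v$ already vanishes, and using that $r>0$ everywhere together with connectedness of $\RR^2$, one spreads the set $\{v=0\}$ to all of $\RR^2$, giving $u\equiv M_0$.

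\textbf{The obstacle and the role of the growth condition.} The genuine difficulty is the escaping case $|x_n|\to\infty$ (supremum not attained), in which concentration of $u$ near $M_0$ on enormous circles does not by itself yield a contradiction. This is precisely where the hypothesis $r(x)\le |x|+M$ is decisive, and the two-dimensionality is essential. I would control the radial motion through the function $g(y)=\log|y|$, which is harmonic off the origin; the mean of $g$ over $\partial B_{r(x)}(x)$ equals $\log\max(|x|,r(x))$, so $\mathbb{E}_x[\log|X_1|]-\log|x| = \log\big(1+\max(0,r(x)-|x|)/|x|\big)\le \log(1+M/|x|)$, a nonnegative drift that decays at infinity exactly because $r(x)\lesssim |x|$. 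Thus $\log|X_n|$ is an almost-martingale with vanishing drift, which (as for the planar simple walk) makes $\{X_n\}$ recurrent rather than transient, ruling out escape to infinity and, via running two coupled copies started from arbitrary points $a,b$ until they meet, forcing $u(a)=u(b)$. Establishing this recurrence (equivalently, excluding the runaway near-maximizer scenario) is the step I expect to be the main obstacle; the rest is the soft maximum-principle argument above.
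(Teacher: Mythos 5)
Your instincts about which tools matter are sound: the paper (a survey) gives no proof of this theorem, but it records that Hansen's original argument \cite{Ha2} rests on a technical minimum principle involving Choquet boundaries, while his later elementary proof \cite{Ha3} uses precisely the two facts you invoke, namely the identity $(2\pi)^{-1}\int_0^{2\pi}\log|x+\rho\,\E^{\ii t}|\,\D t=\log|x|$ for $\rho\in(0,|x|)$ and the inequality $\log(1+a)\le a$; your Lyapunov function $\log|y|$ and drift bound $\log(1+M/|x|)$ are exactly these ingredients. Nevertheless, what you have written is not a proof, and the gaps are not cosmetic. The central one you name yourself: the ``recurrence'' step that excludes escaping near-extremizers \emph{is} the theorem, and your drift estimate alone does not deliver it. Note in particular that the increments of $\log|X_n|$ are unbounded below (when $r(x)$ is close to $|x|$ the circle passes arbitrarily near the origin), that $r$ is completely unconstrained on $\{|x|<M\}$, and that $r$ is not assumed continuous, locally bounded below, or even measurable --- the last point undermines the very definition of your Markov chain and of $\mathbb{E}_x[u(X_n)]$, since no measurable transition kernel is available; Hansen's theorem assumes no regularity of $r$ whatsoever.

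Even granting recurrence, your endgame fails twice over. The coupling step is wrong in a continuous state space: two copies of the chain started at $a$ and $b$ will in general never meet \emph{exactly} (planar recurrence is neighbourhood-recurrence, not point-recurrence), so ``run them until they meet'' does not force $u(a)=u(b)$. The propagation step is also logically insufficient: from $v(x^*)=0$ you get $v\equiv 0$ only on the circle $\partial B_{r(x^*)}(x^*)$, and iterating produces a closed set $Z$ with the property that $x\in Z$ implies $\partial B_{r(x)}(x)\subset Z$; but these formal properties do not imply $Z=\RR^2$. For instance, $Z=\{|x|\ge 1\}$ is closed, connected, and circle-invariant for the admissible choice $r(x)=|x|-1$ when $|x|>1$ and $r(x)=|x|+1$ when $|x|=1$ (compatible with the growth bound), so connectedness of $\RR^2$ does not rescue the argument. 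That circle means, as opposed to disc means, cannot be handled by such soft maximum-principle spreading is exactly the lesson of the Hansen--Nadirashvili solution of Littlewood's one-circle problem (Theorem~3.4 of this paper): a bounded continuous function can satisfy the one-circle property on the disc without being harmonic. Any correct proof must therefore exploit the global structure in an essential, quantitative way, which is what makes \cite{Ha2} and \cite{Ha3} genuinely delicate.
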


\noindent The original proof of this theorem published by Hansen \cite{Ha2} relies
on `a rather technical minimum principle involving the Choquet boundary of a compact
set with respect to a function cone' as is pointed out in the subsequent paper
\cite{Ha3}. The latter contains a new proof which involves only elementary geometry
and basic facts like the inequality $\log (1 + a) \leq a$ for $a > 0$ and the mean
value property
\[ (2 \pi)^{-1} \int_0^{2 \pi} \log |x + \rho \E^{\ii t}| \, \D t = \log |x| \quad
\mbox{for} \ \rho \in (0, |x|) , \ x \in \RR^2 .
\]
Some other results concerning Liouville’s theorem and the restricted mean volume
property in the plane can be found in the brief note \cite{Ha4}.

\vskip7pt {\bf 3.6. On the one ball problem in $\mathbf{\RR^m}$, $\mathbf{m \geq
2}$.} The result presented in this section is an improvement of a theorem obtained
by Flatto \cite{Fl}. It concerns the following question which to some extent is
similar to the Littlewood's one circle problem considered in Sect.~3.4. Let $u \in
C^0 (\RR^m)$, $m \geq 2$, and for a certain fixed $r > 0$ the mean value equality
\begin{equation}
u (x) = \frac{1}{\omega_m r^{m}} \int_{|y| < r} u (x + y) \, \D y \label{Vv}
\end{equation}
holds for all $x \in \RR^m$. What growth condition must be imposed on $u (x)$ as
$|x| \to \infty$ to guarantee $u$ be harmonic? The answer involves properties of
zeros of the even, entire function
\[ \eta (z) = 2^{m/2} \Gamma \left( \frac{m}{2} + 1 \right) \frac{J_{m/2} (z)}{z^{m/2}}
- 1 \, ,
\]
where $J_\nu$ is the $\nu$th Bessel function. It occurs (see \cite{Vol}, Sect.~3)
that along with the double zero at the origin this function has a sequence $\{ z_k
\}_1^\infty$ of simple zeros such that $\Re z_k > 0$ and $|\Im z_k| > 0$ for all $k
\geq 1$. Moreover, there exists $\mu = \min_{k \geq 1} |\Im z_k| > 0$, which allows
us to define $h (x, r) = |x|^{(1-m)/2} \exp \{\mu |x| / r \}$, which plays the
crucial role in following assertion proved by Volchkov \cite{Vol} in 1994.

\begin{theorem}
Let $u \in C^0 (\RR^m)$, $m \geq 2$, satisfy \eqref{Vv} for all $x \in \RR^m$ and
some fixed $r > 0$. Then $u$ is harmonic provided $u (x) = o (h (x, r))$ as $|x| \to
\infty$.

On the other hand, there exists a function $u \in C^\infty (\RR^m)$ and $r > 0$ such
that \eqref{Vv} holds for all $x \in \RR^m$ and $u (x) = O (h (x, r))$ as $|x| \to
\infty$, but $u$ is not harmonic.
\end{theorem}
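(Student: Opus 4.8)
The plan is to recast \eqref{Vv} as the convolution equation $u = u * K_r$, where $K_r = (\omega_m r^m)^{-1}\mathbf{1}_{B_r(\mathbf 0)}$, and to read off its structure from the Fourier side. A direct computation of the Fourier transform of the indicator of a ball gives
\[ \widehat{K_r}(\xi) = 2^{m/2}\,\Gamma\!\left(\tfrac m2+1\right)\frac{J_{m/2}(r|\xi|)}{(r|\xi|)^{m/2}} = \eta(r|\xi|) + 1, \]
so that $u$ satisfies \eqref{Vv} precisely when $\eta(r|\xi|)\,\widehat u(\xi) = 0$. This single identity explains the appearance of $\eta$ and its zeros: the admissible frequencies are forced onto the set where $\eta(r|\xi|)=0$. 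Since $\widehat{K_r}$ extends to an entire function of $\xi\cdot\xi$ of exponential type, I would first note that $u$ is automatically real-analytic, and that every component in a spherical-harmonic expansion solves the same one-radius equation.

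Then I would decompose $u(x) = \sum_{k,j} c_{k,j}(|x|)\,Y_{k,j}(x/|x|)$ into spherical harmonics and analyse each radial coefficient separately. By the Funk--Hecke formula, averaging over $B_r$ acts on each component by a one-dimensional transform whose symbol is again governed by $\eta$, and the solutions regular at the origin of the resulting radial equation are spanned by the harmonic monomial $|x|^k$ (eigenvalue zero) together with the metaharmonic functions
\[ v_{k,\ell}(x) = \left(\tfrac{z_\ell|x|}{r}\right)^{1-m/2} J_{k+m/2-1}\!\left(\tfrac{z_\ell|x|}{r}\right) Y_{k,j}(x/|x|), \]
each a solution of $\nabla^2 v = -(z_\ell/r)^2 v$. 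From the large-argument asymptotics of $J_\nu$ one sees that $v_{k,\ell}(x)$ grows exactly like $|x|^{(1-m)/2}\exp\{|\Im z_\ell|\,|x|/r\}$, the slowest such rate being attained when $|\Im z_\ell| = \mu$, which is precisely $h(x,r)$. The key completeness step is to show that every continuous solution of \eqref{Vv} expands as a convergent superposition of these harmonic and metaharmonic pieces.

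Granting that expansion, the first assertion follows by a growth argument: projecting the hypothesis $u(x) = o(h(x,r))$ onto each $Y_{k,j}$-component (using that the spherical $L^2$-average of $u$ controls each coefficient) shows that every $c_{k,j}$ grows slower than $h$; since any nonzero metaharmonic contribution grows at least like $h$, all such contributions must vanish and only the harmonic parts $a_{k,j}|x|^k Y_{k,j}$ survive, whence $u$ is harmonic. For the sharpness statement I would exhibit a single minimal metaharmonic solution: choosing a zero $z_{\ell_0}$ with $|\Im z_{\ell_0}| = \mu$ and forming a real, regular-at-the-origin combination of the corresponding $v_{0,\ell_0}$ together with its conjugate and reflected zeros (which are also zeros of $\eta$, by the evenness and reality of its coefficients) produces a $C^\infty$ function satisfying \eqref{Vv} with $u(x) = O(h(x,r))$ that is not harmonic, since it is a genuine solution of $\nabla^2 v = -(z_{\ell_0}/r)^2 v$ with nonzero eigenvalue.

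The hard part will be the completeness/expansion theorem together with the transfer of the growth bound to individual components. Showing that the single borderline rate $h$ --- rather than some strictly larger rate --- is the sharp threshold requires a careful Phragm\'en--Lindel\"of-type estimate, uniform over the infinitely many spherical-harmonic degrees $k$, to guarantee that the strict smallness in $u = o(h)$ forces even the slowest-growing metaharmonic coefficient to vanish. The analytic input here is the precise location of the zeros $z_k$ of $\eta$ (in particular the existence and positivity of $\mu = \min_{k\geq 1}|\Im z_k|$) and the matching Bessel asymptotics; this is exactly where the detailed information about $\eta$ recorded before the statement is indispensable.
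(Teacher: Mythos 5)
First, a point of reference: the paper itself contains no proof of this statement --- it is a survey item, recorded with a citation to Volchkov \cite{Vol} (and with the needed facts about the zero set of $\eta$ quoted from \cite{Vol}, Sect.~3) --- so your proposal has to be judged on its own merits rather than against an argument in the text. On those merits, the sharpness half of your proposal is essentially sound: a regular-at-the-origin eigenfunction of $\nabla^2 v = -(z_\ell/r)^2 v$ satisfies \eqref{Vv} precisely because $\eta(z_\ell)=0$; the quadruple symmetry $\pm z_\ell, \pm\bar z_\ell$ of the zero set (evenness and real coefficients of $\eta$) lets you assemble a real $C^\infty$ example; the Bessel asymptotics give growth $O(|x|^{(1-m)/2}\E^{\mu|x|/r})$ when $|\Im z_\ell|=\mu$; and such a combination of eigenfunctions with nonzero, non-conjugate-equal eigenvalues cannot be harmonic.

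The main half, however, has two genuine gaps. (a) The opening Fourier step is not legitimate in the class of functions considered: the hypothesis $u=o(h(x,r))$ permits growth like $\E^{\mu|x|/r}$, so $u$ need not be a tempered distribution, $\widehat u$ is undefined, and the identity $\eta(r|\xi|)\,\widehat u(\xi)=0$ is only a heuristic. Consistently with this, the relevant ``frequencies'' $z_\ell/r$ are non-real, hence invisible on the real $\xi$-axis; if $u$ \emph{were} tempered, the support of $\widehat u$ would collapse to the origin (since $\eta(t)\neq 0$ for real $t\neq 0$), $u$ would be a polynomial, and the theorem would be easy via Pizzetti's expansion --- the whole difficulty of the theorem lives exactly in the exponential range your Fourier argument cannot reach. (b) More seriously, the step you yourself flag --- that every continuous solution of \eqref{Vv} expands as a convergent superposition of harmonic monomials and the metaharmonic pieces $v_{k,\ell}$, with coefficients controlled well enough that an $o(h)$ bound on the sum annihilates every metaharmonic coefficient --- is not a lemma one can ``grant''; it is essentially the theorem itself. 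This is a spectral-synthesis assertion for a convolution equation in a class of exponentially growing functions, a notoriously delicate matter (synthesis fails for general translation-invariant subspaces of $C(\RR^m)$), and proving it for this radial kernel, with coefficient bounds uniform in the spherical-harmonic degree $k$ (the asymptotics of $J_{k+m/2-1}$ are not uniform in $k$), together with the extraction of individual coefficients from an infinite series whose terms have oscillating exponential growth, is precisely where the work of \cite{Vol} lies. As written, the proposal identifies the correct spectral picture and the correct extremal examples, but reduces the positive assertion to an unproved claim at least as strong as the one to be established.
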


\noindent Another theorem obtained in \cite{Vol} deals with the case when the mean
value over a second ball is involved through the operator
\[ (B u) (x) = u (x) - \frac{1}{\omega_m r_1^{m}} \int_{|y| < r_1} u (x + y) \, \D y
\, .
\]
Let $A$ denote the set of quotients each having some elements of $\{ z_k
\}_1^\infty$ as the numerator and denominator.

\begin{theorem}
Let $u \in C^0 (\RR^m)$, $m \geq 2$, satisfy \eqref{Vv} for all $x \in \RR^m$ and
some $r > 0$. Then $u$ is harmonic provided $r/r_1 \notin A$ and $(B u) (x) = o (h
(x, r))$ as $|x| \to \infty$.

On the other hand, for any $r, r_1 > 0$ there exists a function $u \in C^\infty
(\RR^m)$ such that \eqref{Vv} holds for all $x \in \RR^m$ and $(B u) (x) = O (h (x,
r))$ as $|x| \to \infty$, but $u$ is not harmonic.
\end{theorem}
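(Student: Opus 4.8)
The plan is to read \eqref{Vv} as a convolution equation and to exploit the action of the ball average on metaharmonic eigenfunctions, the zeros of $\eta$ being exactly the spectral data encoded in $\mu$ and in the set $A$. The basic identity is this: if $w$ is metaharmonic with $\Delta w+\kappa^2 w=0$ for some $\kappa\in\CC$, then the normalized ball average acts on $w$ by a scalar,
\[
\frac{1}{\omega_m r^m}\int_{|y|<r} w(x+y)\,\D y=2^{m/2}\Gamma\!\left(\tfrac m2+1\right)\frac{J_{m/2}(\kappa r)}{(\kappa r)^{m/2}}\,w(x)=\big(\eta(\kappa r)+1\big)\,w(x).
\]
Hence such a $w$ satisfies \eqref{Vv} if and only if $\eta(\kappa r)=0$, that is, $\kappa\in\{0\}\cup\{z_k/r:k\geq1\}$. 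The value $\kappa=0$ yields harmonic $w$, while each $\kappa=z_k/r$ yields a non-harmonic solution; from the large-argument asymptotics of $J_{m/2}$ with the complex argument $z_k/r$ this solution grows like $|x|^{(1-m)/2}\exp\{|\Im z_k|\,|x|/r\}$, and the slowest admissible rate, attained when $|\Im z_k|=\mu$, is precisely $h(x,r)$.

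First I would establish that every continuous solution of \eqref{Vv} is a superposition of these elementary solutions: $u=u_0+\sum_k u_k$, where $u_0$ is harmonic and each $u_k$ is metaharmonic with $\kappa=z_k/r$ (split further into spherical-harmonic orders). This is a spectral synthesis statement for the radial convolver obtained from $\mu_r-\delta$, and it is available here because the convolver is radial and its symbol $\eta(r\,\cdot\,)$ has only simple zeros off the origin, so the Bessel-type machinery of \cite{Vol} applies. Proving this decomposition rigorously, together with the matching two-sided growth estimate for each $u_k$, is the step I expect to be the main obstacle: spectral synthesis can fail for general convolution equations in $\RR^m$, and one must use radiality throughout to force the expansion and to show that distinct modes cannot cancel one another's growth.

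With the decomposition in hand, the roles of $B$ and of the hypothesis $r/r_1\notin A$ become transparent through the same averaging identity taken at radius $r_1$: the operator $B$ annihilates $u_0$ and acts on $u_k$ as multiplication by the scalar $-\eta(r_1 z_k/r)$. Now $\eta(r_1 z_k/r)=0$ would force $r_1 z_k/r$ to equal $0$ or some $z_j$, i.e. $r/r_1=z_k/z_j\in A$; since $r/r_1\notin A$, every factor $-\eta(r_1 z_k/r)$ is nonzero. Therefore $Bu=\sum_k\big(-\eta(r_1 z_k/r)\big)\,u_k$ retains all non-harmonic modes with nonzero coefficients, so if some $u_k\neq0$ then $Bu$ grows at least as fast as $|x|^{(1-m)/2}\exp\{\mu|x|/r\}=h(x,r)$, contradicting $(Bu)(x)=o(h(x,r))$. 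Thus every $u_k$ vanishes and $u=u_0$ is harmonic, which is the first assertion.

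For the sharpness statement I would exhibit a single surviving mode. Fix a zero $z_k$ with $|\Im z_k|=\mu$ and let $u$ be the real part of the entire radial metaharmonic function satisfying $\Delta u+(z_k/r)^2 u=0$ that is regular at the origin; it is $C^\infty$ (indeed real-analytic). By the identity of the first paragraph it satisfies \eqref{Vv}, it grows exactly like $h(x,r)$, and for every $r_1>0$ one has $Bu=-\eta(r_1 z_k/r)\,u$, whence $(Bu)(x)=O(h(x,r))$, while $u$ is plainly non-harmonic. This yields the required counterexample and shows that in the first part the growth order $h(x,r)$ cannot be relaxed from $o$ to $O$.
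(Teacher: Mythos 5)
First, a caveat on the comparison itself: the paper is a survey and contains no proof of this statement --- it is quoted from Volchkov \cite{Vol} with only the remark that such results exist and have local analogues in \cite{Vol1}. So your proposal can only be judged against Volchkov's method, whose overall strategy you have in fact identified correctly: the normalized ball average acts on a solution of $\Delta w+\kappa^2w=0$ as multiplication by $\eta(\kappa r)+1$, the admissible frequencies for \eqref{Vv} are $\{0\}\cup\{z_k/r\}$, the operator $B$ multiplies the $k$th mode by $-\eta(r_1z_k/r)$, and the hypothesis $r/r_1\notin A$ is exactly what makes these factors nonzero. Your counterexample is also essentially sound: with $|\Im z_k|=\mu$ and $\Phi(x)=c\,(z_k|x|/r)^{1-m/2}J_{m/2-1}(z_k|x|/r)$, the function $u=\Re\Phi$ satisfies \eqref{Vv}, has $Bu=\Re\bigl(-\eta(r_1z_k/r)\Phi\bigr)=O(h(x,r))$, and is not harmonic --- though "plainly non-harmonic" does require the small verification that $\Re\bigl[(z_k/r)^2\Phi\bigr]\not\equiv 0$, which follows from $\Im(z_k^2)\neq 0$, i.e.\ from $\Re z_k>0$ and $|\Im z_k|>0$.

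The genuine gap is the step you yourself flag as the main obstacle, and it is not a technicality but the entire analytic content of the theorem: the claim that every $u\in C^0(\RR^m)$ satisfying \eqref{Vv} decomposes as $u_0+\sum_k u_k$ with $u_0$ harmonic and $u_k$ metaharmonic of frequency $z_k/r$. Spectral synthesis for radial convolution equations yields only that $u$ is a limit, uniformly on compact sets, of finite linear combinations of exponential-polynomial solutions with frequencies in the zero set of the symbol; it does not produce a convergent series of individually well-defined modes, nor any control relating the growth of $u$ (or of $Bu$) to the "coefficients" of such modes. Note also that the growth hypothesis constrains $Bu$, not $u$, so you cannot place $u$ a priori in a weighted space where a stronger synthesis theorem would apply. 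Consequently the key inference --- "if some $u_k\neq0$ then $Bu$ grows at least as fast as $h(x,r)$" --- is a lower bound on an infinite superposition of modes, each growing at least like $h$, and proving it requires precisely the quantitative machinery (reduction by spherical harmonics, Fourier--Bessel expansions of the radial coefficients, and estimates of those coefficients in terms of growth) that constitutes Volchkov's actual proof. One reduction you missed would at least dispose of the growth issue cleanly: since $B$ commutes with averaging over $B_r$, the function $v=Bu$ itself satisfies \eqref{Vv} and $v=o(h(x,r))$, so the preceding one-ball theorem gives immediately that $v$ is harmonic; but passing from "$u$ satisfies \eqref{Vv}, $Bu$ is harmonic, $r/r_1\notin A$" to "$u$ is harmonic" still requires exactly the mode analysis you postponed, so the proposal as it stands is a road map rather than a proof.
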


\noindent Similar results are true when the volume mean values are changed to the
area ones. Furthermore, `local' versions (that is, for $u$ given in a ball instead
of $\RR^m$) of these theorems are proved in \cite{Vol1}.

{\centering \section{Mean value properties for non-harmonic functions}
}

Since Netuka and Vesel\'y \cite{NV} had considered exclusively harmonic functions
and solutions of the heat equation (see also a comprehensive treatment of the heat
potential theory in the monograph \cite{Wa} by Watson), mean value theorems and related
results for some other partial differential equations are presented in this section.

\vskip7pt {\bf 4.1. Biharmonic functions.} Presumably, the first generalization of
the Gauss-type mean value formula \eqref{V} for higher order elliptic equations was
obtained by Pizzetti. In 1909, he con\-sidered polyharmonic functions, that is, $C^{2
k}$-functions, $k=2,3,\dots$, which satisfy the equation $\Delta^k u = 0$; see the
original note \cite{Piz}, whereas the three-dimensional version of his formula is
given in \cite{CH}, p.~288. A description of the general form of this formula and
certain its generalizations can be found in \cite{BP}. To give an idea of Pizzetti's
results we restrict ourselves to the case of biharmonic mean formulae valid for
functions which satisfy equation \eqref{bih} in a domain $D \subset \RR^m$. The
first formula
\begin{equation}
u (y) = \frac{1}{\omega_m R^{m}} \int_{B_R (y)} \!\! u \, \D x - \frac{R^2}{2 (m+2)}
\nabla^2 u (y) \, , \label{bmv}
\end{equation}
involving the mean value over an arbitrary ball $B_R (y) \subset D$ and analogous to
\eqref{V}, can be found in the classical book \cite{Ni} by Nicolescu. The second
formula is as follows:
\begin{equation}
u (x) = \frac{1}{|\partial B|} \int_{\partial B} \!\! u \, \D S - \frac{r^2}{2 m}
\nabla^2 u (x) \, . \label{bma}
\end{equation}
Here $B = B_r (x)$ is an arbitrary ball in $D$ and $|\partial B|$ is the area of its
boundary. A simple deriva\-tion of \eqref{bma} is given in \cite{KH}; it uses
Green's function for the Laplace equation in a ball and the explicit form of this
function is well-known in the case of the Dirichlet condition on $\partial B$.

Let us illustrate how properties of biharmonic functions analogous to those of
harmonic ones follow from, say \eqref{bmv}. A simple example is Liouville's theorem.

\begin{theorem}
Let $u$ be a bounded biharmonic function on $\RR^m$, then $u \equiv
\mathrm{const}$.
\end{theorem}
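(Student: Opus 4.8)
The plan is to deduce Liouville's theorem for bounded biharmonic functions directly from the biharmonic volume mean value formula \eqref{bmv}, mimicking the classical argument for harmonic functions but tracking the extra Laplacian term. First I would observe that \eqref{bmv} can be rewritten as
\begin{equation}
\nabla^2 u (y) = \frac{2 (m+2)}{R^2} \left[ \frac{1}{\omega_m R^{m}} \int_{B_R (y)} \!\! u \, \D x - u (y) \right] , \label{lap-est}
\end{equation}
valid for every $y \in \RR^m$ and every $R > 0$, since $u$ is biharmonic on all of $\RR^m$ and hence every ball $B_R (y)$ lies in the domain. If $|u| \leq C$ throughout $\RR^m$, then the bracketed quantity on the right of \eqref{lap-est} is bounded in absolute value by $2C$, independently of $R$. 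Letting $R \to \infty$ forces $\nabla^2 u (y) = 0$ for every $y$, so $u$ is in fact harmonic on $\RR^m$.

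Once harmonicity is established, the conclusion follows from the harmonic Liouville theorem, which in turn rests on Corollary~1.3 (the interior gradient estimate): for a harmonic $u$ on $\RR^m$ one has $\max_{x \in D'} |\partial_i u| \leq (m/d) \sup_{\RR^m} |u|$ for any compact $D'$ at distance $d$ from the (empty) boundary. Since $u$ extends harmonically to all of $\RR^m$, the distance $d$ may be taken arbitrarily large while $\sup |u| \leq C$ stays fixed, whence every partial derivative vanishes identically and $u \equiv \mathrm{const}$. Thus the biharmonic case reduces, via the mean value formula, to the already-available harmonic statement.

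The main obstacle is not the harmonic endgame, which is standard, but the justification of the first reduction: one must be sure that formula \eqref{bmv} genuinely holds for all radii $R$ up to infinity and that the $C^4$-regularity implicit in calling $u$ biharmonic legitimizes evaluating $\nabla^2 u (y)$ pointwise as in \eqref{lap-est}. Both are secured because biharmonic functions are real-analytic (the equation $\Delta^2 u = 0$ is elliptic), so $\nabla^2 u$ is continuous and \eqref{bmv} is valid on every ball; the passage $R \to \infty$ then requires only the uniform bound on $u$. I would therefore present the argument in the two stages above, emphasizing that boundedness kills the averaged term at the rate $R^{-2}$ and collapses the biharmonic mean value identity to plain harmonicity.
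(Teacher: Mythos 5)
Your proposal is correct and follows essentially the same route as the paper: rearranging \eqref{bmv} to bound $|\nabla^2 u|$ by $4(m+2)M/R^2$ for arbitrary $R$, letting $R \to \infty$ to conclude harmonicity, and then invoking Liouville's theorem for harmonic functions. The only cosmetic difference is that you spell out the harmonic endgame via the gradient estimate of Corollary~1.3, whereas the paper simply cites the harmonic Liouville theorem from Sect.~3.5.
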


\begin{proof}
If $\sup_{x \in \RR^m} |u (x)| = M < +\infty$, then \eqref{bmv} implies that
\[ \sup_{x \in \RR^m} |\nabla^2 u (x)| \leq \frac{4 (m+2)}{R^2} M \, ,
\]
where $R > 0$ is arbitrary. Hence $\nabla^2 u$ vanishes identically on $\RR^m$, and
so $u \equiv \mathrm{const}$ by Liouville's theorem for harmonic functions; see
Sect.~3.5.
\end{proof}

As another example of similarity between properties of biharmonic and harmonic
functions we consider the equality analogous to \eqref{Dk}. It has the same form
\[ \int_{\partial B} \frac{\partial^k u}{\partial n^k} \, \D S = 0 
\]
with $\overline{B}$ being an arbitrary closed ball in a domain $D \subset \RR^m$, $m
\geq 2$, where $u$ is biharmonic. What distinguishes the last formula from
\eqref{Dk} is that $k \geq 3$ here, whereas $k \geq 1$ in \eqref{Dk}. Again, the
last formula follows from Theorem 1 proved in \cite{Ka}, p.~171.

\vskip7pt {\bf 4.2. The restricted mean value property (4.1) and Liouville's
theorem.} According to Theorem~3.6, it is sufficient to require the restricted mean
value property \eqref{3.6} instead of harmonicity in order to guarantee that a
bounded function is constant. It occurs that the same is true if one imposes
condition \eqref{bmv} instead of \eqref{3.6}. This follows from the assertion (see
\cite{EK} for the proof), which differs from Theorem~3.6 in two ways: another
restricted mean value property is used and the high-dimensional Euclidean space is
considered instead of the plane.

\begin{theorem}
Let a real-valued, Lebesgue measurable function $u$ be bounded on $\RR^m$, $m \geq
3$, and let its Laplacian in the distribution sense be a bounded function. If there
exist a strictly positive function $R$ on $\RR^m$ and a constant $M > 0$ such that
$R (y) \leq |y| + M$, then $u$ is constant provided equality \eqref{bmv} with $R = R
(y)$ holds for every $y \in \RR^m$ and either $u \in C^0 (\RR^m)$ or $R$ is locally
bounded from below by a positive constant.
\end{theorem}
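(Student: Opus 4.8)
The plan is to mimic the structure of the proof of Theorem~3.6 (due to Hansen) as closely as possible, replacing the circumferential mean value property \eqref{3.6} by the biharmonic volume mean value property \eqref{bmv}. The crucial observation is that the extra term in \eqref{bmv} involves $\nabla^2 u$, which by hypothesis is a bounded function. So the strategy is to reduce the biharmonic-type property to an averaging inequality that is amenable to a maximum/minimum-principle argument. First I would rewrite \eqref{bmv} with $R = R(y)$ in the form
\[
u(y) + \frac{R(y)^2}{2(m+2)} \, \nabla^2 u (y) = \frac{1}{\omega_m R(y)^m} \int_{B_{R(y)}(y)} u \, \D x ,
\]
so that the left-hand side is a convex-combination-style average of $u$ over the ball, perturbed by a controlled term. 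Since $\nabla^2 u$ is bounded and $u$ is bounded, both sides are finite, and the boundedness of the Laplacian is what makes the perturbation term manageable rather than catastrophic for large $R(y)$.

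Next I would set up the Liouville-type conclusion by considering $m = \sup u$ and $M_0 = \inf u$ (using fresh names to avoid the constant $M$ already in play) and aim to show the oscillation is zero. Because \eqref{bmv} realises $u(y)$ as a ball average minus a bounded multiple of $\nabla^2 u(y)$, and the radii $R(y)$ are allowed to grow linearly (via $R(y) \le |y| + M$), the heart of the argument is an iterated-averaging / random-walk estimate: starting from a near-extremal point one steps to the centre of mass over $B_{R(y)}(y)$, and the growth condition guarantees that the associated Markov chain can reach arbitrarily far out, so that the supremum must be propagated across all of $\RR^m$. The role of the hypotheses --- either $u \in C^0(\RR^m)$, or $R$ locally bounded below by a positive constant --- is exactly to keep this chain non-degenerate: continuity rules out pathological behaviour on small scales, while the local lower bound on $R$ forces genuine spreading of the averaging operator.

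The hard part will be controlling the biharmonic correction term $\frac{R(y)^2}{2(m+2)} \nabla^2 u(y)$ along the iteration, since $R(y)^2$ grows quadratically in $|y|$ while only $\nabla^2 u$ itself (not $R^2 \nabla^2 u$) is assumed bounded. One must show that this term does not accumulate and destroy the propagation of extrema; I would expect this to require a telescoping or summation-by-parts argument in which the contributions of $\nabla^2 u$ over successive balls cancel in aggregate --- essentially because $\nabla^2 u$ is itself a harmonic-type quantity whose ball averages are well behaved, so that summing \eqref{bmv} along the chain produces a convergent series rather than a divergent one. This is the step where I would lean on the detailed construction in \cite{EK}, treating the Laplacian as an auxiliary bounded function and invoking its own mean value behaviour. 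Once the correction term is shown to be asymptotically negligible along the chain, the remaining argument is the same maximum-principle propagation as in the proof of Theorem~3.6, yielding that $u$ attains its supremum everywhere and is therefore constant.
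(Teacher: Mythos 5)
The first thing to know is that the paper offers no proof of this theorem at all: it is stated as a survey item, with the proof explicitly deferred to the reference \cite{EK}. So your sketch has to stand on its own, and it does not. The decisive objection is circularity: at the step you yourself identify as ``the hard part'' --- controlling the correction term $\frac{R(y)^2}{2(m+2)}\nabla^2 u(y)$ along the iteration --- you propose to ``lean on the detailed construction in \cite{EK}'', but \cite{EK} is precisely the paper whose theorem you are asked to prove, so nothing has been established. Moreover, the heuristic you offer in place of that step is unsound: you expect the contributions of $\nabla^2 u$ to telescope ``because $\nabla^2 u$ is itself a harmonic-type quantity whose ball averages are well behaved''. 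That would be true if $u$ were known to be biharmonic, but it is not known --- a one-radius-per-point identity does not imply the differential equation, which is the whole point of this circle of results (compare the counterexamples behind Theorems 3.4 and 3.5). The difficulty you flag is real and your scheme does not survive it: since $R(y)$ may grow like $|y|$, the perturbation $R(y)^2\nabla^2 u(y)$ can grow quadratically, and a propagation-of-extrema argument for $u$ carrying such a perturbation has no reason to close. Finally, even the endgame you call routine is not: the proof of Theorem 3.6 is reported in the paper to require either probabilistic methods or ``a rather technical minimum principle involving the Choquet boundary'', not a simple maximum-principle propagation.

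The observation that actually unlocks the theorem --- and which your reduction misses --- is that the correction term should not be fought but used. For any admissible ball one has the representation
\[
\frac{1}{\omega_m R^{m}} \int_{B_R (y)} u \, \D x \; - \; u(y) \;=\; \int_{B_R(y)} K_R(y,z)\, \nabla^2 u(z)\, \D z ,
\]
where $K_R(y,\cdot)\ge 0$ is obtained by averaging the Green's functions of the balls $B_r(y)$, $r\le R$, and whose total mass (test the identity on $u=|z-y|^2$) is exactly $R^2/[\,2(m+2)\,]$. Hence equality \eqref{bmv} at $y$ with radius $R(y)$ is \emph{equivalent} to the statement that the bounded function $w=\nabla^2 u$ equals its own average against the probability measure $2(m+2)R(y)^{-2}K_{R(y)}(y,z)\,\D z$ supported in $\overline{B_{R(y)}(y)}$. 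In other words, the hypothesis is a restricted mean value property for $w$, with radii obeying the same growth bound $R(y)\le |y|+M$; the alternative hypotheses (continuity, or $R$ locally bounded below) are exactly what is needed to invoke a Hansen--Nadirashvili-type Liouville theorem for generalized restricted means (cf.\ \cite{HN2} and Theorem 3.6) applied to $w$, not to $u$. This yields $w\equiv c$; then $u-\frac{c}{2m}|x|^2$ is harmonic by Weyl's lemma, and boundedness of $u$ forces $c=0$ (its ball averages over $B_R(\mathbf{0})$ would otherwise grow like $R^2$); so $u$ agrees a.e.\ with a bounded harmonic, hence constant, function $h$, and \eqref{bmv} --- now with zero correction --- gives $u(y)=h(y)$ at every point. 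That reduction, rather than a perturbed averaging scheme for $u$ itself, is the missing idea.
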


\noindent There is another result based on equality \eqref{bmv} in the note
\cite{EK}. It is aimed at proving harmonicity of a locally Lebesgue integrable
function which is harmonically dominated in a domain lying in $\RR^m$, $m \geq 3$.

{\bf 4.3. Koebe-type and Harnack-type results for biharmonic functions.} Like in
the case of harmonic functions, both \eqref{bmv} and \eqref{bma} guarantee that a
function $u$ is biharmonic provided these equalities hold for all admissible balls
(that is, lying within a domain $D$) centred at almost every point of~$D$. Namely,
the following assertion was proved in \cite{KH} (cf. \cite{NV}, Theorem~3.1).

\begin{theorem}
Let $u$ be a locally Lebesgue integrable function on a domain $D \subset \RR^m$, $m
\geq 2$, and let its Laplacian in the distribution sense be a function. Then the
following conditions are equivalent:

{\rm (i)} the function $u$ is biharmonic on $D$;

{\rm (ii)} for almost every $y \in D$ equality \eqref{bmv} holds for all $R > 0$
such that $\overline{B_R (y)} \subset D$;

{\rm (iii)} for almost every $y \in D$ equality \eqref{bma} holds for all $R > 0$
such that $\overline{B_R (y)} \subset D$.
\end{theorem}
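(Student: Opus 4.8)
The three conditions split into an elementary half and a substantial half. The implications (i)$\Rightarrow$(ii) and (i)$\Rightarrow$(iii) are precisely Pizzetti's formulae \eqref{bmv} and \eqref{bma} recorded above, so nothing new is required there. All the content lies in the two converses, and the plan is to carry out (ii)$\Rightarrow$(i) in detail, the argument for (iii)$\Rightarrow$(i) being word for word the same with the area mean $L$ in place of the volume mean $A$. Writing $v = \nabla^2 u$ for the distributional Laplacian (a function by hypothesis), condition (ii) reads
\[
A (u,y,R) = u (y) + \frac{R^2}{2 (m+2)} \, v (y)
\]
for almost every $y \in D$ and every $R$ with $\overline{B_R (y)} \subset D$. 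The right-hand side is an \emph{exact affine function of $R^2$}, whereas for a smooth function the left-hand side carries a full expansion in $R^2$ of Pizzetti type, $A (w,y,R) = \sum_{k \geq 0} c_k R^{2k} \nabla^{2k} w (y)$, whose coefficients satisfy $c_0 = 1$, $c_1 = 1/[2 (m+2)]$ and, crucially, $c_2 = 1/[8 (m+2)(m+4)] > 0$ (obtained by integrating the spherical Pizzetti formula against $\rho^{m-1}\,\D\rho$). Matching the $R^4$-terms will then force $\nabla^4 u = 0$.

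First I would regularise. Fix a subdomain $D' \Subset D$ and a radially symmetric mollifier $\phi_\epsilon$ supported in $B_\epsilon ({\bf 0})$, and set $u_\epsilon = u * \phi_\epsilon$, $v_\epsilon = v * \phi_\epsilon$; since $u_\epsilon$ is smooth one has $v_\epsilon = \nabla^2 u_\epsilon$ classically. Because the volume mean is itself a convolution, $A (u_\epsilon, \cdot, R) = (u * K_R) * \phi_\epsilon = A (u,\cdot,R) * \phi_\epsilon$, where $K_R$ is the normalised indicator of the ball. Choosing $R_0$ and $\epsilon$ so small that $R_0 + \epsilon < \mathrm{dist} (D', \partial D)$, I would convolve the almost-everywhere identity above against $\phi_\epsilon (y - \cdot)$; the null exceptional set is harmless under integration, and for every $y \in D'$ and every $R \in (0, R_0]$ this yields the \emph{exact smooth} identity
\[
A (u_\epsilon, y, R) = u_\epsilon (y) + \frac{R^2}{2 (m+2)} \, \nabla^2 u_\epsilon (y) .
\]

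The conclusion is then immediate. For fixed $y \in D'$ the map $R \mapsto A (u_\epsilon, y, R)$ is $C^\infty$ and even, hence smooth in $s = R^2$, with Taylor coefficients at $s = 0$ equal to the Pizzetti coefficients $c_k \nabla^{2k} u_\epsilon (y)$. The displayed identity says this smooth function coincides on $[0, R_0^2)$ with the affine function $u_\epsilon (y) + s\,\nabla^2 u_\epsilon (y) / [2 (m+2)]$; equal smooth functions have equal Taylor coefficients, so the second coefficient gives $c_2 \nabla^4 u_\epsilon (y) = 0$, and $c_2 \neq 0$ forces $\nabla^4 u_\epsilon \equiv 0$ on $D'$, i.e. $u_\epsilon$ is biharmonic. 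Letting $\epsilon \to +0$, we have $u_\epsilon \to u$ in $L^1_{\mathrm{loc}} (D)$, so $\nabla^4 u = 0$ in the sense of distributions on $D$; by hypoellipticity of the (elliptic) biharmonic operator $u$ agrees almost everywhere with a smooth, classically biharmonic function, which is (i). The same scheme applied to \eqref{bma} settles (iii)$\Rightarrow$(i), the relevant second Pizzetti coefficient of the spherical mean being $1/[8 m (m+2)] > 0$.

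The main obstacle I expect is not the Pizzetti comparison, which is clean, but the low-regularity bookkeeping in the mollification step: verifying that the almost-everywhere relation holds \emph{simultaneously for all admissible $R$} (this is exactly the strong form of the hypothesis), that $A$ and $\phi_\epsilon$-convolution genuinely commute on the shrunken domain where all balls fit, and that $v_\epsilon = \nabla^2 u_\epsilon$; together with the final passage to the distributional limit and the appeal to elliptic regularity to upgrade $\nabla^4 u = 0$ from $\mathcal{D}'(D)$ to classical biharmonicity.
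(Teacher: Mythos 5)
The paper itself contains no proof of this theorem: it is a survey statement quoted from El Kadiri and Haddad \cite{KH}, so there is no in-text argument to compare yours against; your proposal therefore has to be judged on its own merits, and it stands up. The two easy implications are indeed Pizzetti's formulae \eqref{bmv} and \eqref{bma}, with the one caveat that, since (ii) and (iii) are almost-everywhere statements, ``biharmonic'' in (i) must be read as ``equal a.e.\ to a smooth biharmonic function'' (which is forced anyway by hypoellipticity), and the formulae are applied to that smooth representative. For the converse, your scheme --- mollify, commute the mean-value operator with $\phi_\epsilon$ on a shrunken domain where all balls fit, use $v_\epsilon = \nabla^2 u_\epsilon$, match coefficients in the Pizzetti expansion, conclude $\nabla^4 u_\epsilon \equiv 0$, pass to the distributional limit, and invoke ellipticity of $\Delta^2$ --- is sound, and your coefficients are correct: $c_2 = 1/[8(m+2)(m+4)]$ for the volume mean and $1/[8m(m+2)]$ for the spherical mean, both strictly positive. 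Two small simplifications/completions. First, the evenness-in-$R$/smoothness-in-$s=R^2$ argument is unnecessary: Pizzetti's expansion with remainder, $A(u_\epsilon,y,R) = u_\epsilon(y) + c_1 R^{2}\nabla^{2}u_\epsilon(y) + c_2 R^{4}\nabla^{4}u_\epsilon(y) + o(R^4)$, subtracted from the exact affine identity, gives $c_2 R^4 \nabla^4 u_\epsilon(y) = o(R^4)$ as $R \to +0$, whence $\nabla^4 u_\epsilon(y) = 0$ directly. Second, in (iii)$\Rightarrow$(i) one should record that for $u \in L^1_{\rm loc}$ the spherical means $L(u,\cdot,R)$ exist a.e.\ and are locally integrable (Fubini), so that the identity $L(u,\cdot,R) * \phi_\epsilon = L(u_\epsilon,\cdot,R)$ makes sense; with that, the spherical case is indeed word for word parallel to the volume case. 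These are exactly the bookkeeping points you flagged yourself, and none of them is an obstruction.
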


\noindent A natural consequence of this theorem is the following assertion
analogous to Corollary 1.4.

\begin{corollary}[The Harnack-type convergence theorem]
Let every function of a sequence $\{u_k\}$ be biharmonic in $D$. If the convergence
$u_k \to u$ as $k \to \infty$ is locally uniform in $D$, then $u$ is biharmonic in $D$.
\end{corollary}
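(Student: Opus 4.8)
The plan is to reduce the biharmonic case to the harmonic one by exploiting the mean value identity \eqref{bma}, which expresses the Laplacian of a biharmonic function in terms of the function and its spherical averages alone---without reference to any derivatives. Writing $v_k = \nabla^2 u_k$, the identity \eqref{bma} applied to each biharmonic $u_k$ gives
\[ v_k (x) = \frac{2m}{r^2} \left[ L (u_k, x, r) - u_k (x) \right] \]
for every admissible ball $B_r (x)$. Since $u_k \to u$ locally uniformly, on any compact $K \subset D$ one may fix $r$ small enough that $\overline{B_r (x)} \subset D$ for all $x \in K$, so that the convergence is uniform on the compact union of these closed balls; hence both $u_k (x)$ and the surface means $L (u_k, x, r)$ converge uniformly on $K$. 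Consequently $v_k$ converges locally uniformly, and its pointwise limit $v (x) = \lim_k v_k (x)$ equals $\frac{2m}{r^2} [ L (u, x, r) - u (x) ]$ for every admissible $r$, and is in particular independent of $r$.

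The next step is to identify $v$ as a harmonic function. Because each $u_k$ is biharmonic, $v_k = \nabla^2 u_k$ satisfies $\nabla^2 v_k = \Delta^2 u_k = 0$, so $v_k$ is harmonic in $D$. Applying Harnack's convergence theorem (Corollary~1.4) on every closed ball $\overline B \subset D$---on whose boundary the locally uniform convergence $v_k \to v$ is uniform---shows that $v$ is harmonic throughout $D$. Moreover, passing to the limit in \eqref{bma} yields $u (x) = L (u, x, r) - \frac{r^2}{2m} v (x)$ for every $x \in D$ and every admissible $r$.

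It remains to invoke the Koebe-type characterization (Theorem~4.3). The limit $u$ is continuous, hence locally integrable; and since $v_k = \Delta u_k \to v$ locally uniformly while $u_k \to u$ locally uniformly, one has $\Delta u = v$ in the distribution sense, so the distributional Laplacian of $u$ is the function $v$. The displayed identity is precisely condition~(iii) of Theorem~4.3, namely that equality \eqref{bma} holds for every $y \in D$ and every admissible radius. Therefore Theorem~4.3 applies and $u$ is biharmonic in $D$.

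The main obstacle is the presence of the second-order term $\nabla^2 u_k$ in \eqref{bma}: locally uniform convergence of the $u_k$ does not by itself control their derivatives. The device that overcomes this is that \eqref{bma} rewrites $\nabla^2 u_k$ as a difference of the function and its surface average---quantities that converge locally uniformly---after which the harmonicity of each $\nabla^2 u_k$ lets Harnack's convergence theorem carry the limit through.
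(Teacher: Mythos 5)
Your proof is correct and follows the route the paper intends: the paper states this corollary as a direct consequence of Theorem~4.3 without supplying any details, and your argument --- passing to the limit in \eqref{bma} and invoking condition~(iii) of Theorem~4.3 for the limit function $u$, whose distributional Laplacian you identify as the harmonic limit $v$ --- is precisely a correct realization of that deduction. Your device for the only nontrivial point the paper leaves implicit (rewriting $\nabla^2 u_k$ via \eqref{bma} as $\frac{2m}{r^2}\left[ L(u_k,x,r) - u_k(x) \right]$, so that locally uniform convergence of the $u_k$ alone forces locally uniform convergence of their Laplacians, which are harmonic and hence have a harmonic limit by Corollary~1.4) is exactly what makes the ``natural consequence'' go through.
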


Furthermore, Bramble and Payne \cite{BP} obtained direct and converse mean value
properties for polyharmonic functions in terms different from those used by Pizzetti.
As above, we restrict ourselves to biharmonic functions only in formulations of
these properties.

\begin{theorem}
Let $u$ be a biharmonic function in a domain $D \subset \RR^m$, $m \geq 2$. If $r_1
< r_2$ are positive numbers and $B_{r_2} (y) \subset D$ for some $y \in D$, then
\begin{equation}
u (y) = \left[ \frac{r_2^2}{\omega_m r_1^{m}} \int_{B_{r_1} (y)} \!\! u \, \D x -
\frac{r_1^2}{\omega_m r_2^{m}} \int_{B_{r_2} (y)} \!\! u \, \D x \right] \bigg/
\left( r_2^2 - r_1^2 \right) \, . \label{bV}
\end{equation}
\end{theorem}

\noindent The assertion with volume means changed to spherical ones in the square
brackets is also true, whereas the converse is as follows.

\begin{theorem}
Let $u$ be a locally Lebesgue integrable function on a domain $D \subset \RR^m$, $m
\geq 2$. If $u$ satisfies \eqref{bV} for almost every $y \in D$ and all positive
$r_1 < r_2$ with sufficiently small $r_2$, then $u$ is equal almost everywhere in $D$
to a biharmonic function.
\end{theorem}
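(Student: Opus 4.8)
The plan is to reduce to the smooth case by mollification, exploiting the fact that \eqref{bV} is exactly the combination of two volume means in which the second-order Pizzetti term cancels. First I would recast \eqref{bV} in terms of the volume mean $A(u,y,r)$ introduced after Theorem~1.2. A one-line rearrangement shows that \eqref{bV} is equivalent to
\[ r_2^2\,[A(u,y,r_1)-u(y)] = r_1^2\,[A(u,y,r_2)-u(y)], \]
that is, to the statement that $r^{-2}[A(u,y,r)-u(y)]$ is independent of $r$ for small $r$. Hence there is a locally integrable function $\phi$ with
\[ A(u,y,r) = u(y) + r^2\,\phi(y) \]
for almost every $y\in D$ and all sufficiently small $r$; in other words, $r\mapsto A(u,y,r)$ is affine in $r^2$.

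Next I would mollify. With a radial mollifier $\zeta_\varepsilon$ put $u_\varepsilon = u\ast\zeta_\varepsilon\in C^\infty(D_\varepsilon)$ on a slightly smaller domain $D_\varepsilon$; then $u_\varepsilon\to u$ in $L^1_{\mathrm{loc}}(D)$. Since $u\mapsto A(u,\cdot,r)$ is itself a convolution (with the normalized indicator of $B_r$), it commutes with $\ast\,\zeta_\varepsilon$, and therefore $u_\varepsilon$ inherits the same affine-in-$r^2$ identity:
\[ A(u_\varepsilon,y,r) = u_\varepsilon(y) + r^2\,\Phi_\varepsilon(y), \qquad \Phi_\varepsilon = \phi\ast\zeta_\varepsilon, \]
now valid for \emph{every} $y\in D_\varepsilon$ and all small $r$.

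The heart of the argument is to combine this with the Pizzetti expansion of the volume mean of the smooth function $u_\varepsilon$,
\[ A(u_\varepsilon,y,r) = u_\varepsilon(y) + \frac{r^2}{2(m+2)}\,\nabla^2 u_\varepsilon(y) + \frac{r^4}{8(m+2)(m+4)}\,\Delta^2 u_\varepsilon(y) + o(r^4) \quad (r\to0), \]
whose first two coefficients reproduce \eqref{bmv}. Dividing the affine identity by $r^2$ and letting $r\to0$ identifies $\Phi_\varepsilon = [2(m+2)]^{-1}\nabla^2 u_\varepsilon$; subtracting the $O(r^2)$ part then forces the coefficient of $r^4$ to vanish, and since it is a nonzero multiple of $\Delta^2 u_\varepsilon(y)$, we conclude $\Delta^2 u_\varepsilon = 0$ on $D_\varepsilon$. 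Thus every $u_\varepsilon$ is biharmonic.

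Finally I would let $\varepsilon\to0$. For $\psi\in C_c^\infty(D)$ and $\varepsilon$ small enough that $u_\varepsilon$ is defined near $\operatorname{supp}\psi$, the $L^1_{\mathrm{loc}}$ convergence together with integration by parts gives
\[ \int_D u\,\Delta^2\psi\,\D x = \lim_{\varepsilon\to0}\int_D u_\varepsilon\,\Delta^2\psi\,\D x = \lim_{\varepsilon\to0}\int_D (\Delta^2 u_\varepsilon)\,\psi\,\D x = 0, \]
so $\Delta^2 u = 0$ in the sense of distributions; hypoellipticity of $\Delta^2$ (the Weyl lemma for the biharmonic operator) then yields that $u$ agrees almost everywhere with a biharmonic function. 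I expect the main obstacle to lie not in this limit but in the first two steps: one must check that the threshold below which \eqref{bV} is assumed can be chosen uniformly on compact subsets (after deleting a null set), so that the convolution identity holds at fixed radii on $D_\varepsilon$, and one must know that the $r^4$-coefficient in the Pizzetti expansion is genuinely nonzero, which is precisely what makes $\Delta^2 u_\varepsilon=0$ readable off. The commutation of averaging with mollification and the regularity conclusion are routine.
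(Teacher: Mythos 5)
The paper itself offers no proof of this statement: it is quoted as a result of Bramble and Payne \cite{BP}, and the survey passes over its proof entirely. So your proposal can only be judged on its own merits, and on those merits it is sound. The opening reduction is exactly right: \eqref{bV} rearranges to $r_2^2\,[A(u,y,r_1)-u(y)]=r_1^2\,[A(u,y,r_2)-u(y)]$, i.e.\ to constancy of $r^{-2}[A(u,y,r)-u(y)]$ in $r$; your Pizzetti coefficients are the correct ones, $r^2/[2(m+2)]$ for $\nabla^2 u$ (consistent with \eqref{bmv}) and $r^4/[8(m+2)(m+4)]$ for $\Delta^2 u$, the latter manifestly nonzero, so the affine-in-$r^2$ identity does force $\Delta^2 u_\varepsilon=0$ pointwise; and the passage to the limit via $\int_D u\,\Delta^2\psi\,\D x=0$ together with hypoellipticity of $\Delta^2$ is standard. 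It is worth noting that your route is very much in the spirit of techniques the survey uses elsewhere: the mollifier argument of Mikhlin's proof of the Koebe theorem \cite{M}, its adaptation to the Helmholtz equation in \cite{GN} (Theorem 4.7), and the Steklov-mean computation in the proof of Theorem 2.6. In effect you also exhibit the bridge between Theorem 4.5 and the Koebe-type Theorem 4.3: the two-radius identity \eqref{bV} for all small pairs of radii is equivalent to Pizzetti's identity \eqref{bmv} holding for all small radii, with the constant identified as $[2(m+2)]^{-1}\nabla^2 u$.

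The one genuine soft spot is the one you flag yourself: the uniformity of the smallness threshold. If ``sufficiently small $r_2$'' were allowed to mean a threshold $\rho(y)$ with no positive lower bound on compact subsets (after removing a null set), then for a fixed pair $r_1<r_2$ the identity need not hold at almost every point of any ball, and the convolution step --- which needs the identity at fixed radii a.e.\ on $B_\varepsilon(x)$ --- would collapse. This is not a pedantic worry: the survey's Section 3 (Theorem 3.3(ii) versus Theorem 3.5) shows that for merely measurable functions, mean value hypotheses with radii not locally bounded away from zero genuinely fail to imply harmonicity. Under the natural reading of the statement --- a fixed (or locally uniform) threshold, subject to $\overline{B_{r_2}(y)}\subset D$ --- your argument is complete, but a careful write-up should make that reading explicit and record why $\phi$ and hence $\phi\ast\zeta_\varepsilon$ are well defined (for fixed small $r$, $\phi(y)=r^{-2}[A(u,y,r)-u(y)]$ is the difference of a continuous function and a locally integrable one, so it is locally integrable).
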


\noindent Analogues of Theorems 4.4 and 4.5 for polyharmonic functions involve
certain determinants in the numerator and denominator of the fraction on the
right-hand side of \eqref{bV}.

\vskip7pt {\bf 4.4. Metaharmonic functions in a domain.} This term serves as a
convenient (though, may be, an out-of-use) abbreviation for `solutions of the
Helmholtz equation', like the term `harmonic functions' is a widely used equivalent
to `solutions of the Laplace equation'. Indeed, the Helmholtz equation
\begin{equation}
\nabla^2 u + \lambda^2 u = 0 , \quad \mbox{where} \ \lambda \in \CC , \label{Hh}
\end{equation}
has the next level of complexity comparing with the Laplace equation $\nabla^2 u =
0$, and so it is reasonable to use the Greek prefix {\it meta-} (equivalent to Latin
{\it post-}) in order to denominate solutions of \eqref{Hh}. Presumably, the term
metaharmonic functions was introduced by I.~N. Vekua in his still widely cited
article \cite{Ve1} (its English translation was published as Appendix~2 in \cite{Ve2}
and is available online at:
ftp://ftp.math.ethz.ch/hg/EMIS/journals/TICMI/lnt/vol14/vol14.pdf).

Equation \eqref{Hh} was briefly considered by Euler and Lagrange in their studies of
sound propagation and vibrating membranes as early as 1759, but it was Helmholtz who
initiated detailed investigation of this equation now named after him. The aim of
his article \cite{Helm} published in 1860 was to describe sound waves in a tube with
one open end (organ pipe), for which purpose he derived a representation of
solutions to \eqref{Hh} analogues to the Green's representation formula for harmonic
functions. Thus, the way was opened to obtaining mean value properties for
metaharmonic functions, and this was realised by Weber in his papers \cite{W1} and
\cite{W2}, in which the following formulae similar to \eqref{A}
\begin{equation}
u (x) = \frac{\lambda r}{4 \pi r^2 \sin \lambda r} \int_{\partial B_r (x)} \!\! u \,
\D S \quad \mbox{and} \quad u (x) = \frac{1}{2 \pi r J_0 (\lambda r)} \int_{\partial
B_r (x)} \!\! u \, \D S \, , \ \ \lambda > 0 , \label{We}
\end{equation}
were found in three- and two-dimensional cases, respectively (see also \cite{CH},
pp.~288 and 289, respectively); here $J_0$ is the Bessel function of order zero. The
counterparts of formulae \eqref{We} for the so-called modified Helmholtz equation
(in which $\lambda = \pm \ii \kappa$ with $\kappa > 0$) are given in \cite{Po},
where a generalization to solutions of $(\nabla^2 - \kappa^2)^p u = 0$,
$p=2,3,\dots$, is considered. The general mean value property for spheres is
derived, for example, in \cite{CH}, p.~289.

\begin{theorem}
Let $D$ be a domain in $\RR^m$, $m \geq 2$. If $u \in C^2 (D)$ satisfies equation
\eqref{Hh} in $D$, then
\begin{equation}
u (x) = \frac{N (m, \lambda)}{m \, \omega_m r^{m-1}} \int_{\partial B} u \, \D S \,
, \quad N (m, \lambda) = \frac{(\lambda r / 2)^{(m-2)/2}}{\Gamma (m/2) \, J_{(m-2)/2}
(\lambda r)} \, , \label{Ah}
\end{equation}
for every ball $B = B_r (x)$ such that $\overline B \subset D$; $J_\nu$ is the
$\nu$th Bessel function.
\end{theorem}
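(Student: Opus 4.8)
The plan is to reduce the assertion to an ordinary differential equation for the spherical mean $L(u,x,r)$ regarded as a function of $r$, exploiting that the radial part of $\nabla^2$ governs how this mean evolves. Fixing $x \in D$ and writing $M(r) = L(u,x,r)$ for $r$ small enough that $\overline{B_r(x)} \subset D$, I would first establish the Darboux-type identity
\[
M''(r) + \frac{m-1}{r}\,M'(r) = \frac{1}{m\,\omega_m r^{m-1}} \int_{\partial B_r(x)} \nabla^2 u \,\D S .
\]
This comes from rescaling to the unit sphere, $M(r) = (m\omega_m)^{-1}\int_{|\xi|=1} u(x+r\xi)\,\D S_\xi$, and differentiating under the integral sign: $M'(r)$ becomes the average of $\partial u/\partial n$ over $\partial B_r(x)$, which by the divergence theorem (exactly as in the derivation of \eqref{V} from \eqref{A}) equals $(m\omega_m r^{m-1})^{-1}\int_{B_r(x)}\nabla^2 u\,\D x$; one further differentiation in $r$ produces the displayed relation.

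Next I would insert the Helmholtz equation. Since $\nabla^2 u = -\lambda^2 u$ in $D$, the right-hand side above is precisely $-\lambda^2 M(r)$, so $M$ solves
\[
M''(r) + \frac{m-1}{r}\,M'(r) + \lambda^2 M(r) = 0 .
\]
This is a Bessel-type equation: the substitution $M(r) = r^{-(m-2)/2} w(r)$ (or direct comparison) shows that its two independent solutions are $r^{-(m-2)/2} J_{(m-2)/2}(\lambda r)$ and $r^{-(m-2)/2} Y_{(m-2)/2}(\lambda r)$. Because $M(r)$ stays bounded as $r \to +0$, with limiting value $u(x)$, the solution built from $Y_{(m-2)/2}$ (singular at the origin for every $m \geq 2$) must be discarded, whence $M(r) = C\,r^{-(m-2)/2} J_{(m-2)/2}(\lambda r)$ for some constant $C$ depending on $x$.

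Finally I would pin down $C$ by letting $r \to +0$. On the one hand $M(r) \to u(x)$ by continuity of $u$ (shrinking the sphere to its centre); on the other hand the small-argument asymptotics $J_\nu(z) \sim (z/2)^\nu / \Gamma(\nu+1)$ give $r^{-(m-2)/2} J_{(m-2)/2}(\lambda r) \to (\lambda/2)^{(m-2)/2}/\Gamma(m/2)$. Equating the two limits fixes $C = u(x)\,\Gamma(m/2)\,(\lambda/2)^{-(m-2)/2}$, and substituting back yields
\[
u(x) = \frac{(\lambda r/2)^{(m-2)/2}}{\Gamma(m/2)\,J_{(m-2)/2}(\lambda r)}\;M(r)
= \frac{N(m,\lambda)}{m\,\omega_m r^{m-1}} \int_{\partial B} u \,\D S ,
\]
which is \eqref{Ah}. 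The step demanding the most care is the Darboux reduction together with the selection of the regular Bessel solution; the concluding Bessel bookkeeping is routine and recovers \eqref{We} for $m=2,3$. One should also note the implicit requirement $J_{(m-2)/2}(\lambda r) \neq 0$, automatic for small $r$ and understood in the statement, and that the argument passes verbatim to complex $\lambda$, since the relevant asymptotics of $J_{(m-2)/2}$ persist there.
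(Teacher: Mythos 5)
Your proof is correct: the Darboux identity for the spherical mean $M(r)$, the resulting Bessel-type ODE $M'' + \frac{m-1}{r}M' + \lambda^2 M = 0$, the rejection of the solution singular at $r=0$ (logarithmic for $m=2$, power-type for $m\geq 3$), and the normalization via the small-argument asymptotics of $J_{(m-2)/2}$ are all sound, including your remark about the implicit requirement $J_{(m-2)/2}(\lambda r)\neq 0$. The paper gives no proof of this theorem at all — it simply cites Courant--Hilbert, p.~289, where exactly this spherical-means/Darboux derivation is carried out — so your proposal coincides with the argument the paper points to.
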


\noindent It is straightforward to calculate that
\begin{equation}
\frac{N (m, \lambda)}{m \, \omega_m r^{m-1}} = \frac{1}{\lambda J_{(m-2)/2} (\lambda
r)} \left( \frac{\lambda}{2 \pi r} \right)^{m/2} . \label{levit}
\end{equation}
A new approach to the derivation of \eqref{Ah} was developed in the note \cite{GN},
which also contains the following converse to Theorem~4.6.

\begin{theorem}
Let $D$ be a bounded domain in $\RR^m$, $m \geq 2$. If $u \in C^0 (D)$ and for every
$x \in D$ there exits $r_* (x) > 0$ such that $B_{r_* (x)} \subset D$ and equality
\eqref{Ah} holds for every $B = B_r (x)$ with $r < r_* (x)$, then $u$ is
metaharmonic in~$D$.
\end{theorem}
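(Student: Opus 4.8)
The plan is to convert the pointwise mean value identity \eqref{Ah} into an asymptotic (Blaschke-type) condition and then upgrade that condition to the Helmholtz equation itself. First I would expand the normalising constant for small $r$. Using the power series
\[ J_{(m-2)/2} (\lambda r) = \frac{(\lambda r/2)^{(m-2)/2}}{\Gamma (m/2)} \left[\, 1 - \frac{\lambda^2 r^2}{2 m} + O (r^4) \,\right] \]
together with $\Gamma (m/2 + 1) = (m/2)\, \Gamma (m/2)$, one obtains $1/N (m,\lambda) = 1 - \lambda^2 r^2 / (2m) + O (r^4)$ as $r \to +0$. Since \eqref{Ah} reads $u (x) = N (m,\lambda)\, L (u,x,r)$ for all $r < r_* (x)$, this gives $L (u,x,r) - u (x) = - \frac{\lambda^2 r^2}{2 m}\, u (x) + O (r^4)\, u (x)$, and therefore
\[ \lim_{r \to +0} r^{-2} \left[\, L (u,x,r) - u (x) \,\right] = - \frac{\lambda^2}{2 m}\, u (x) \quad \mbox{for every} \ x \in D . \]
This is precisely the inhomogeneous analogue of Blaschke's condition \eqref{Bl}: the generalised Laplacian $\lim_{r \to +0} 2 m\, r^{-2} [ L (u,x,r) - u (x) ]$ exists at every point and equals the continuous function $- \lambda^2 u$.

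The second, and decisive, step is to show that a continuous $u$ whose generalised Laplacian exists everywhere and equals the continuous function $- \lambda^2 u$ is a classical solution of $\nabla^2 u + \lambda^2 u = 0$. For a genuinely $C^2$ function the spherical mean satisfies $L (u,x,r) - u (x) = \frac{r^2}{2 m}\, \nabla^2 u (x) + o (r^2)$, so matching with the displayed limit would give $\nabla^2 u = - \lambda^2 u$ at once; the entire difficulty is thus the regularity, that is, establishing the metaharmonic counterpart of Blaschke's Theorem~1.6. I would prove it by localisation: fix $x_0 \in D$ and choose $\delta$ so small that $\lambda^2$ lies below the first Dirichlet eigenvalue of $- \nabla^2$ on $B_\delta (x_0)$, which is possible since this eigenvalue grows like $\delta^{-2}$. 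On such a ball the Dirichlet problem for \eqref{Hh} is uniquely solvable and the operator $\nabla^2 + \lambda^2$ obeys a maximum principle, so one may take $v$ to be the metaharmonic function with $v = u$ on $\partial B_\delta (x_0)$ and compare it with $u$. Being metaharmonic, $v$ satisfies \eqref{Ah}, hence the same asymptotic; the difference $w = u - v$ then has generalised Laplacian equal to $- \lambda^2 w$, vanishes on $\partial B_\delta (x_0)$, and a Privalov-type comparison against the Helmholtz potential of $- \lambda^2 w$ forces $w \equiv 0$. Thus $u = v$ is metaharmonic near $x_0$, and since $x_0$ is arbitrary, throughout~$D$; elliptic regularity then even yields $u \in C^\infty$.

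The main obstacle is exactly this second step. In the harmonic case (Theorems~1.4 and 1.6) the maximum principle holds globally and the Poisson kernel is explicit, whereas for \eqref{Hh} neither is available unconditionally, because the Dirichlet problem for the Helmholtz equation may resonate and $\nabla^2 + \lambda^2$ has no definite sign. Restricting to balls small enough to sit below the first eigenvalue is what restores coercivity, unique solvability and positivity of the Green function, and hence the comparison argument. Finally, handling the point-dependence of the radius $r_* (x)$ is precisely what forces one to work with the asymptotic relation rather than with a naive uniform mollification of \eqref{Ah}, which would require $r_*$ to be bounded away from zero on neighbourhoods — an assumption not granted here.
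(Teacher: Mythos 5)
The survey does not actually prove this statement (its Theorem 4.7): the result is quoted from Grigor'ev and Naumov \cite{GN}, and the only indication given is that the proof ``involves the mollifier technique used in \cite{M} for proving the Koebe theorem.'' So your route is genuinely different from the one the paper points to. Your first step is correct and complete: from $J_{(m-2)/2}(\lambda r) = \frac{(\lambda r/2)^{(m-2)/2}}{\Gamma(m/2)}\bigl[1 - \tfrac{\lambda^2 r^2}{2m} + O(r^4)\bigr]$ one gets $1/N(m,\lambda) = 1 - \lambda^2 r^2/(2m) + O(r^4)$, and since \eqref{Ah} is assumed for \emph{all} $r < r_*(x)$, the limit $\lim_{r\to+0} r^{-2}[L(u,x,r)-u(x)] = -\tfrac{\lambda^2}{2m}\,u(x)$ holds at every $x\in D$. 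Your closing remark is also apt: the pointwise character of $r_*(x)$ is exactly what makes the asymptotic (Blaschke-type) formulation natural here.

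The gap is in the decisive second step, and it is concrete: you invoke ``the operator $\nabla^2+\lambda^2$ obeys a maximum principle'' on sub-eigenvalue balls, but that is a statement about $C^2$ functions, whereas at that stage $w=u-v$ is only continuous with a pointwise generalized Laplacian --- a maximum principle at this low regularity is precisely what has to be proved, so the appeal is circular. Moreover the naive argument genuinely fails: at an interior positive maximum $y_0$ of $w$ one only gets $\lim_{r\to+0} r^{-2}[L(w,y_0,r)-w(y_0)]\le 0$, which is perfectly consistent with the hypothesis that this limit equals $-\lambda^2 w(y_0)<0$; no contradiction results. Two ways to close the gap. (a) Ground-state perturbation: take $\phi>0$ smooth and metaharmonic near $\overline{B_\delta(x_0)}$ (e.g.\ $\phi(x)=\cos\bigl(\lambda(x_1-x_{0,1})\bigr)$ with $\lambda\delta<\pi/2$) and consider an interior maximum $y_\epsilon$ of $(w+\epsilon)/\phi$; averaging the inequality $w+\epsilon\le \phi\cdot\max\bigl[(w+\epsilon)/\phi\bigr]$ over small spheres at $y_\epsilon$ and using $\nabla^2\phi=-\lambda^2\phi$ yields $-\lambda^2 w(y_\epsilon)\le -\lambda^2\bigl(w(y_\epsilon)+\epsilon\bigr)$, i.e.\ $\lambda^2\le 0$, a contradiction. (b) Cleaner, and closest to your phrase ``Privalov-type comparison against the potential'': drop $v$, $w$ and eigenvalues altogether; let $P(x)=\int_B \Phi(x-y)f(y)\,\D y$ be the Newtonian potential over a small ball $B$ of the continuous function $f=-\lambda^2 u$. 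Then $\nabla^2 P=f$ distributionally, and a classical computation with Newton's theorems on spherical means of $\Phi$ shows that the generalized Laplacian of $P$ equals $f$ at every continuity point of $f$; hence $u-P$ has vanishing generalized Laplacian and is harmonic by Theorem~1.6, so $\nabla^2 u+\lambda^2 u=0$ holds distributionally in $B$, and hypoellipticity gives $u\in C^\infty$, i.e.\ $u$ is metaharmonic near every point (this version also covers complex $\lambda$ in \eqref{Hh}, for which ``below the first eigenvalue'' has no meaning). As written, however, this key lemma --- in either form --- is asserted rather than proved, and the one tool you invoke explicitly, the classical maximum principle for $\nabla^2+\lambda^2$, does not apply to $w$ at the point where you use it.
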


\noindent The proof involves the mollifier technique used in \cite{M} for proving
the Koebe theorem.

\vskip7pt {\bf 4.5. Metaharmonic functions on $\mathbf{\RR^m}$, $\mathbf{m \geq
2}$.} In view of self-similarity, it is sufficient to consider solutions of\\[-5mm]
\begin{equation}
\nabla^2 u + u = 0 ,  \label{H1}
\end{equation}
in which case an assertion analogous to Theorems 4.6 and 4.7 is as follows.

\begin{theorem}
A function $u \in C^0 (\RR^m)$, $m \geq 2$, satisfying \eqref{H1} in the
distribution sense, is a solution of this equation if and only if the
equality\\[-3mm]
\begin{equation}
(2 \pi r)^{m/2} J_{m/2} (r) \, u (x) = \int_{|y| < r} \!\! u (x+y) \, \D y 
\label{Rh}
\end{equation}
holds for every $r > 0$ and every $x \in \RR^m$. Hence the integral vanishes when
$r$ is equal to any positive zero of $J_{m/2}$.
\end{theorem}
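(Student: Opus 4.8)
The plan is to establish both implications by relating the volume mean value \eqref{Rh} to the spherical mean value \eqref{Ah} (with $\lambda = 1$) through integration and differentiation in the radial variable, thereby reducing everything to Theorems~4.6 and~4.7 already at our disposal. The bridge between the two formulations is the Bessel recurrence
\[
\frac{\D}{\D \rho} \big[ \rho^{m/2} J_{m/2} (\rho) \big] = \rho^{m/2} J_{(m-2)/2} (\rho) ,
\]
which is exactly what makes the normalising constants match up.

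First I would treat the forward implication. Assuming $u$ is a distributional (hence, by ellipticity of $\nabla^2 + 1$, real-analytic and classical) solution of \eqref{H1}, Theorem~4.6 with $\lambda = 1$ together with the identity \eqref{levit} yields
\[
\int_{\partial B_\rho (x)} u \, \D S = (2 \pi \rho)^{m/2} J_{(m-2)/2} (\rho) \, u (x)
\]
for every $\rho > 0$ and every $x$; since both sides are continuous in $\rho$, the relation persists at the zeros of $J_{(m-2)/2}$, where each side vanishes. Integrating over $\rho \in (0, r)$ and passing to polar coordinates on the left gives
\[
\int_{|y| < r} u (x+y) \, \D y = u (x) \, (2 \pi)^{m/2} \int_0^r \rho^{m/2} J_{(m-2)/2} (\rho) \, \D \rho .
\]
By the recurrence above the last integral equals $r^{m/2} J_{m/2} (r)$, since $\rho^{m/2} J_{m/2} (\rho) \to 0$ as $\rho \to +0$ for $m \geq 2$; this is precisely \eqref{Rh}.

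For the converse I would differentiate \eqref{Rh} in $r$. Because $u \in C^0 (\RR^m)$, the right-hand side is differentiable with $r$-derivative equal to the surface integral $\int_{\partial B_r (x)} u \, \D S$, while differentiating the left-hand side and invoking the same recurrence produces $(2 \pi)^{m/2} r^{m/2} J_{(m-2)/2} (r) \, u (x)$. Equating the two recovers exactly the spherical mean value \eqref{Ah} with $\lambda = 1$, now valid for every $r > 0$ and every $x$. Applying Theorem~4.7 on each ball $B_R ({\bf 0})$ then shows $u$ to be metaharmonic on every such ball, hence throughout $\RR^m$. Finally, taking $r$ to be any positive zero of $J_{m/2}$ in \eqref{Rh} annihilates its left-hand side and yields the asserted vanishing of the volume integral.

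I expect the only genuinely delicate point to be the bookkeeping around the zeros of the Bessel functions: formula \eqref{Ah} is literally a division by $J_{(m-2)/2}(\lambda r)$, so the identities above must be read as continuity statements that remain valid at those radii rather than as pointwise equalities involving division by zero. Everything else — the Leibniz/co-area differentiation of the ball integral, the recurrence relation, and the small-$\rho$ vanishing of the boundary term — is routine.
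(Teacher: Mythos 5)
The paper itself gives no proof of this theorem: it is stated survey-style, with only the remark that it is ``an assertion analogous to Theorems 4.6 and 4.7,'' so there is no in-paper argument to compare against line by line. That said, your reduction to those two theorems is exactly the route the paper's phrasing invites, and your argument is correct and complete. The forward direction is sound: elliptic regularity upgrades the distributional solution to a classical one, Theorem~4.6 with $\lambda=1$ (rewritten via \eqref{levit} in the multiplication form $\int_{\partial B_\rho(x)} u \, \D S = (2\pi\rho)^{m/2} J_{(m-2)/2}(\rho)\,u(x)$, which by continuity survives at the zeros of $J_{(m-2)/2}$) integrates in $\rho$ to \eqref{Rh}, using the recurrence $\frac{\D}{\D\rho}\bigl[\rho^{m/2}J_{m/2}(\rho)\bigr]=\rho^{m/2}J_{(m-2)/2}(\rho)$ and the vanishing of $\rho^{m/2}J_{m/2}(\rho)$ at the origin. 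The converse is equally sound: for continuous $u$ the ball average is an antiderivative of the sphere average, so differentiating \eqref{Rh} recovers the spherical identity for all $r$; restricting to $r$ below the first positive zero of $J_{(m-2)/2}$ (so that the division in \eqref{Ah} is legitimate) puts you squarely in the hypotheses of Theorem~4.7 on each ball $B_R(\mathbf 0)$, and exhausting $\RR^m$ by such balls finishes the argument. Your closing caveat about reading \eqref{Ah} as a continuity statement at the Bessel zeros is precisely the right bookkeeping, and the final clause of the theorem follows, as you say, by evaluating \eqref{Rh} at a zero of $J_{m/2}$.
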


\begin{remark}
It is remarkable that the expressions involved in \eqref{levit} and \eqref{Rh} are
also used in an estimate of the spectral function for the Dirichlet Laplacian.
Indeed, dividing $J_{m/2} (\lambda r)$ by $(2 \pi r / \lambda)^{m/2}$ one obtains
the principal term in the estimate (0.6) proved in \cite{Lev} (see p.~268 of this
paper). Of course, the meaning of $\lambda$ and $r$ in the latter estimate differs
from that in formulae \eqref{levit} and \eqref{Rh} (notice that $\lambda = 1$ in the
last formula).
\end{remark}

\noindent Below, the standard notation is used for the $k$th positive zero of
$J_{\nu}$, namely, $j_{\nu, k}$, $k=1,2,\dots$. In his note \cite{Vol2} published in
1994, Volchkov proved a converse to the last assertion of Theorem~4.8; it involves
the sequence of functions\\[-3mm]
\begin{equation}
\Phi_k (x) = \int_{|y| < j_{m/2, k}} \!\! u (x+y) \, \D y \label{Phi}
\end{equation}
defined with the help of zeros of $J_{m/2}$.

\begin{theorem}
If $u \in L^1_{\rm loc} (\RR^m)$, $m \geq 2$, is such that $\Phi_k$ vanishes
identically on $\RR^m$ for all $k=1,2,\dots$, then $u$ coincides almost everywhere
with a solution of equation \eqref{H1}.
\end{theorem}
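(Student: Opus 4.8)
The plan is to pass to the Fourier side, where the hypothesis turns into a statement about the support of $\widehat u$, and then to exploit the arithmetic of the zeros $j_{m/2,k}$ of $J_{m/2}$. Writing $R_k=j_{m/2,k}$, one notices that $\Phi_k$ in \eqref{Phi} is exactly the convolution $\Phi_k=u*\chi_{B_{R_k}}$, where $\chi_{B_{R_k}}$ is the indicator of the ball $B_{R_k}({\bf 0})$ (the ball being symmetric, no reflection is needed). The Fourier transform of $\chi_{B_R}$ is the classical radial expression $\widehat{\chi_{B_R}}(\xi)=(2\pi)^{m/2}R^{m/2}|\xi|^{-m/2}J_{m/2}(R|\xi|)$, in which one recognizes the very factor occurring in \eqref{Rh} evaluated at $|\xi|=1$.

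First I would reduce to a smooth $u$ by mollification: if $u_\varepsilon=u*\varphi_\varepsilon$ for a standard radial mollifier $\varphi_\varepsilon$, then $\Phi_k^\varepsilon:=u_\varepsilon*\chi_{B_{R_k}}=(u*\chi_{B_{R_k}})*\varphi_\varepsilon\equiv 0$ as well, so $u_\varepsilon$ satisfies the same hypotheses and is infinitely differentiable. It then suffices to prove that each $u_\varepsilon$ solves $\nabla^2 u_\varepsilon+u_\varepsilon=0$: since $u_\varepsilon\to u$ in $L^1_{\rm loc}(\RR^m)$, hence in the sense of distributions, the limit $u$ is a distributional solution of \eqref{H1}, and because the operator $\nabla^2+1$ is elliptic, a Weyl-type regularity theorem shows that $u$ coincides almost everywhere with a genuine (real-analytic) solution, which is the assertion.

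The heart of the argument is the support computation for the smooth function. From $\Phi_k^\varepsilon\equiv0$ and the convolution theorem one gets $\widehat{u_\varepsilon}(\xi)\,J_{m/2}(R_k|\xi|)=0$ away from the origin; moreover $\widehat{\chi_{B_{R_k}}}$ does not vanish near $\xi={\bf 0}$ (its value there is the volume of the ball), so the origin is excluded from $\operatorname{supp}\widehat{u_\varepsilon}$. Consequently $\operatorname{supp}\widehat{u_\varepsilon}$ is contained, for every $k$, in the zero set $\{\,J_{m/2}(R_k|\xi|)=0\,\}=\bigcup_{l\ge1}\{\,|\xi|=j_{m/2,l}/j_{m/2,k}\,\}$, and therefore in the intersection of all these sets over $k=1,2,\dots$. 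The unit sphere $|\xi|=1$ always belongs to this intersection (take $l=k$); the crucial claim is that it is the \emph{only} radius that survives, i.e. that the sole $\rho>0$ with $\rho\,j_{m/2,k}\in\{j_{m/2,l}\}_{l\ge1}$ for every $k$ is $\rho=1$. Granting this, $\widehat{u_\varepsilon}$ is carried by $\{|\xi|=1\}$, whence $(|\xi|^2-1)\widehat{u_\varepsilon}=0$; transforming back gives $\nabla^2 u_\varepsilon+u_\varepsilon=0$, as required.

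The main obstacle is precisely this claim about common dilations of the zero set of $J_{m/2}$, which rests on the fine asymptotics of Bessel zeros. By McMahon's expansion $j_{m/2,k}=(k+\tfrac{m}{4}-\tfrac14)\pi+O(1/k)$, so consecutive gaps tend to $\pi$; if $\rho\,j_{m/2,k}=j_{m/2,\sigma(k)}$ held for all $k$, comparing the leading asymptotics would force $\sigma(k+1)-\sigma(k)\to\rho$, so that $\rho$ must be a positive integer, and then matching the $O(1/k)$ correction term (whose coefficient $m^2-1>0$ is positive for $m\ge2$) rules out every integer $\rho\ge2$, leaving $\rho=1$. A second point deserving care is the rigorous use of the Fourier transform when $u$ is merely locally integrable and a priori not tempered; in Volchkov's treatment this is handled by decomposing into spherical harmonics and reducing the vanishing of the $\Phi_k$ to uniqueness for Fourier--Bessel-type expansions on the expanding balls $|x|<N$, $N\to\infty$, rather than through a single global transform.
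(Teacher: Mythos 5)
The paper itself states this result without proof --- it is a survey item, attributed to Volchkov's note \cite{Vol2} --- so there is no in-paper argument to compare against, and your proposal must stand on its own. Much of it is sound: the identification $\Phi_k=u*\chi_{B_{R_k}}$, the formula for $\widehat{\chi_{B_R}}$, and, most importantly, the arithmetic claim that $\rho=1$ is the only dilation with $\rho\, j_{m/2,k}$ a zero of $J_{m/2}$ for every $k$. Your McMahon-expansion sketch of that claim is correct: the integer gaps force $\rho\in\{1,2,\dots\}$, matching the constant terms forces $\sigma(k)=\rho k+(\rho-1)(m-1)/4$, and the $O(1/k)$ correction then forces $(m^2-1)(\rho-1/\rho)=0$, i.e.\ $\rho=1$ precisely because $m\geq 2$ (for $m=1$ the coefficient vanishes and the claim fails, consistent with $J_{1/2}$).

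The genuine gap is temperedness. The hypothesis is only $u\in L^1_{\rm loc}(\RR^m)$, with no growth restriction whatsoever; such a function (say $u(x)=\exp(\exp|x|)$ times an oscillating factor) need not define a tempered distribution, and mollification does nothing to improve growth, so $\widehat{u_\varepsilon}$ need not exist in any sense. The entire support computation --- the engine of your proof --- is therefore undefined in the stated generality. Your final sentence concedes this and appeals to ``Volchkov's treatment'' via spherical harmonics and Fourier--Bessel expansions on expanding balls; but that substitute argument is exactly the hard content of the theorem, and you do not carry it out, so what you have actually proved is the weaker statement in which $u$ is additionally assumed tempered (e.g.\ of polynomial growth). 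A second, fixable, flaw: from $\mathrm{supp}\,\widehat{u_\varepsilon}\subset\{|\xi|=1\}$ alone one cannot conclude $(|\xi|^2-1)\widehat{u_\varepsilon}=0$, since a distribution carried by the sphere may involve transversal derivatives of surface-supported distributions (in one variable, $x\,\delta_0'\neq 0$); by itself the support information only yields $(\nabla^2+1)^N u_\varepsilon=0$ for some finite $N$. To get $N=1$ you must use that the zeros of $J_{m/2}$ are simple, so that each $\widehat{\chi_{B_{R_k}}}$ vanishes exactly to first order across $|\xi|=1$; dividing by the nonvanishing cofactor then gives $(|\xi|-1)\widehat{u_\varepsilon}=0$, and hence the Helmholtz equation \eqref{H1}.
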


\noindent Furthermore, it occurs that if mean values of $u$ over all balls of one
particular radius $j_{m/2, k}$ vanish, then $u$ is metaharmonic provided all its
$L^2$-type characteristics\\[-3mm]
\[ M_{r, k} (u) = \int_{|x| < r} \!\! |\Phi_k (x)|^2 \, \D x \, , \quad k=1,2,\dots ,
\]
grow not not too fast.

\begin{theorem}
Let $u \in L^1_{\rm loc} (\RR^m)$, $m \geq 2$, be such that $\Phi_k$ vanishes
identically on $\RR^m$ for some fixed $k$, then $u$ coincides almost everywhere with
a solution of equation \eqref{H1} provided $M_{r, k} (u) = o (r)$ as $r \to \infty$
for all $k=1,2,\dots$.

On the other hand, there exists a function $u \in C^\infty (\RR^m)$ such that
$\Phi_k$ vanishes identically on $\RR^m$ for some fixed $k$ and $M_{r, k} (u) = O
(r)$ as $r \to \infty$ for all $k=1,2,\dots$, but $u$ is not metaharmonic.
\end{theorem}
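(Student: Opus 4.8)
The plan is to pass to the Fourier transform, where the zeros of $J_{m/2}$ that enter \eqref{Phi} acquire a transparent meaning. Writing $\chi_R$ for the indicator of the ball $\{|y|<R\}$, the definition \eqref{Phi} becomes the convolution $\Phi_k = u * \chi_{j_{m/2,k}}$, and a direct computation gives $\widehat{\chi_R}(\xi) = (2\pi R/|\xi|)^{m/2} J_{m/2}(R|\xi|)$; off the origin its zero set is the union of the spheres $|\xi| = j_{m/2,\ell}/R$, $\ell = 1,2,\dots$. On the unit sphere this factor equals $(2\pi R)^{m/2} J_{m/2}(R)$, the very coefficient appearing in \eqref{Rh}, so that by Theorem~4.8 a function is metaharmonic precisely when $\hat u$ is carried by $|\xi|=1$.

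For the direct assertion, denote by $k_0$ the fixed index with $\Phi_{k_0}\equiv 0$. Taking Fourier transforms gives $\hat u\,\widehat{\chi_{j_{m/2,k_0}}} = 0$, so, off the origin, $\hat u$ is supported on the union of the spheres $S_\ell = \{|\xi| = \rho_\ell\}$ with $\rho_\ell = j_{m/2,\ell}/j_{m/2,k_0}$; here $\rho_{k_0}=1$. I would accordingly split $u = \sum_\ell u_\ell$, where $u_\ell$ carries the part of $\hat u$ on $S_\ell$, with spectral density $\mu_\ell$, and hence solves $\nabla^2 u_\ell + \rho_\ell^2 u_\ell = 0$. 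Since $\widehat{\chi_{j_{m/2,k}}}$ is radial it is constant, say $c_{k,\ell} = (2\pi j_{m/2,k}/\rho_\ell)^{m/2} J_{m/2}(j_{m/2,k}\rho_\ell)$, on each $S_\ell$, whence $\Phi_k = \sum_\ell c_{k,\ell} u_\ell$. Because $c_{k,k_0}$ is a constant multiple of $J_{m/2}(j_{m/2,k}) = 0$, the genuine metaharmonic part $u_{k_0}$ drops out of every $\Phi_k$.

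The growth hypothesis is what eliminates the remaining modes $u_\ell$, $\ell\ne k_0$. The key step, which I expect to be the main obstacle, is an equidistribution estimate: solutions of the Helmholtz equation at distinct frequencies are asymptotically orthogonal over expanding balls, so that $r^{-1}\int_{|x|<r} u_\ell\,\overline{u_{\ell'}}\,\D x \to 0$ for $\ell\ne\ell'$ — the oscillations $\E^{\ii\rho_\ell|x|}$ and $\E^{\ii\rho_{\ell'}|x|}$ do not resonate — while each diagonal term tends to a positive multiple of $\|\mu_\ell\|^2$, the squared $L^2$-norm of the spectral density on $S_\ell$ (a H\"ormander-type asymptotic for the Agmon--H\"ormander energy). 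Granting this, $r^{-1}M_{r,k}(u)=r^{-1}\int_{|x|<r}|\Phi_k|^2\,\D x$ converges to a positive constant times $\sum_{\ell\ne k_0}|c_{k,\ell}|^2\|\mu_\ell\|^2$, and the hypothesis $M_{r,k}(u)=o(r)$ forces this to vanish for every $k$. For each fixed $\ell\ne k_0$ the numbers $j_{m/2,k}\rho_\ell$, $k=1,2,\dots$, cannot all be zeros of $J_{m/2}$, since these zeros are eventually spaced by nearly $\pi$; hence $c_{k,\ell}\ne 0$ for some $k$ and therefore $u_\ell=0$. Only $u_{k_0}$ survives, so $u=u_{k_0}$ solves \eqref{H1} almost everywhere. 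The two points demanding care are the justification that $u$ is tempered, so that the spectral splitting is legitimate — this I would extract from the $o(r)$ bound, which rules out non-tempered modes — and the uniformity of the orthogonality estimate.

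For the sharpness assertion it suffices to keep a single spurious mode. Fix any $\ell\ne k_0$, choose a nonzero smooth density on the sphere of radius $\rho_\ell = j_{m/2,\ell}/j_{m/2,k_0}\ne 1$, and let $u$ be its inverse Fourier transform; then $u\in C^\infty(\RR^m)$ solves $\nabla^2 u + \rho_\ell^2 u = 0$ and so is not metaharmonic. As $\rho_{k_0}=1$, one has $\Phi_k = c_{k,\ell}u$ with $c_{k_0,\ell}$ a multiple of $J_{m/2}(j_{m/2,\ell})=0$, giving $\Phi_{k_0}\equiv 0$; and by stationary phase $u(x)=O(|x|^{-(m-1)/2})$, so $\int_{|x|<r}|u|^2\,\D x = O(r)$ and hence $M_{r,k}(u)=|c_{k,\ell}|^2\int_{|x|<r}|u|^2\,\D x = O(r)$ for all $k$. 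Thus every hypothesis of the direct part holds with $o(r)$ weakened to $O(r)$, while $u$ fails to be metaharmonic, which shows that the growth order cannot be improved.
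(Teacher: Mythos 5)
First, a point of reference: the paper itself gives no proof of this theorem --- it is a survey statement quoted from Volchkov \cite{Vol2} --- so your proposal has to be judged on its own merits. The second (sharpness) half of your argument is essentially sound: a single Helmholtz solution at frequency $\rho_\ell=j_{m/2,\ell}/j_{m/2,k_0}\neq 1$ does satisfy $\Phi_{k_0}\equiv 0$ because $\widehat{\chi_{j_{m/2,k_0}}}$ vanishes on $\{|\xi|=\rho_\ell\}$, it decays like $|x|^{-(m-1)/2}$, whence $M_{r,k}(u)=O(r)$ for every $k$, and it is not metaharmonic.

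The direct half, however, rests on a step that fails, and it is the step everything else is built on. You take the Fourier transform of $u$ and conclude from $\Phi_{k_0}\equiv0$ that the support of $\hat u$ lies on the union of the real spheres $|\xi|=\rho_\ell$; you acknowledge that temperedness of $u$ is needed and propose to extract it from the $o(r)$ bounds. This cannot be done, because the hypotheses of the theorem admit non-tempered functions. Indeed, equation \eqref{H1} itself has exponentially growing solutions, e.g.\ $u(x)=\E^{x_m\sinh t}\cos(x_1\cosh t)$ for any $t\neq0$, and for any such solution the identity \eqref{Rh} gives $\Phi_k=(2\pi j_{m/2,k})^{m/2}J_{m/2}(j_{m/2,k})\,u\equiv0$ for every $k$; thus all hypotheses hold with $M_{r,k}(u)\equiv0$, yet $\hat u$ does not exist as a tempered distribution. (The same phenomenon yields exponentially growing solutions of $\Phi_{k_0}\equiv0$ alone: the zero variety of the Fourier--Laplace transform of $\chi_R$ in $\CC^m$ is a union of complexified spheres $\{\zeta\cdot\zeta=\rho_\ell^2\}$, which contain non-real points such as $\zeta=\rho_\ell(\cosh t,\ii\sinh t,0,\dots,0)$.) So the spectral decomposition is simply unavailable in the class of functions considered, and no preliminary reduction can make it available; this is precisely why the known proof proceeds by local means --- expansion in spherical harmonics and reduction to one-dimensional convolution equations analysed through Bessel asymptotics --- rather than by a global Fourier transform. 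Even for tempered $u$ two further gaps would remain: a distribution supported on $\bigcup_\ell S_\ell$ need not split into Helmholtz solutions, since it may contain transversal derivatives of surface measures, i.e.\ polynomially growing solutions of $(\nabla^2+\rho_\ell^2)^p u_\ell=0$, which your orthogonality step must also eliminate; and your spacing argument that $c_{k,\ell}\neq0$ for some $k$ only rules out non-integer $\rho_\ell$ --- for integer $\rho_\ell\geq2$ one needs the second-order asymptotics of the zeros $j_{\nu,k}$ to reach a contradiction.
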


\noindent The method used for proving Theorems~4.9 and 4.10 in \cite{Vol2} is
applicable to mean values over spheres, thus allowing to obtain similar results in
this case.

\vskip7pt {\bf 4.6. Metaharmonic functions on infinite domains in $\mathbf{\RR^m}$,
$\mathbf{m \geq 2}$.} To prove an asser\-tion similar to Theorem~4.9 is much more
complicated task in the case when $D$ is an unbounded domain not coinciding with
$\RR^m$. The reason is that solutions of \eqref{H1} inevitably loose (at least
partly) the translation invariance in such a domain. The first result of this kind
concerns domains of the form $\RR^m \setminus K$, where $K$ is a convex compact set,
and its proof requires essentially new methods; see \cite{Vol3}.

In the recent paper \cite{O}, a wide class of domains, say $\mathcal{O}$, was
introduced and each domain $D \in \mathcal{O}$ has the following properties:

(a) it contains the half-space $H = \{ x \in \RR^m : x_m > 0 \}$, 

(b) for every $x \in D$ there exists a point $x_*$ such that $x \in
\overline{B_{j_{m/2, 1}} (x_*)} \subset D$, 

(c) the set of all admissible centres $x_*$ is a connected subset of $D$.

\noindent It is clear that any half-space belonging to $D$ takes the form $H$ after
an appropriate change of variables involving translation and rotation within~$D$.
Furthermore, for every $k=1,2,\dots$ the function $\Phi_k$ given by \eqref{Phi} is
defined on its own subdomain $D_k \subset D$.

\begin{theorem}
Let a domain $D \subset \RR^m$, $m \geq 2$, belong to the class $\mathcal{O}$, and
let $u \in L^1_{\rm loc} (D)$ be such that\\[-5mm]
\begin{equation}
\int_\alpha^\beta \int_{\RR^{m-1}} |u (x)| \, \D x_1 \cdots \D x_{m-1} \D x_m <
+\infty \label{last'}
\end{equation}
for any positive $\alpha < \beta$. Then the following two assertions are
equivalent:

$\bullet$ $\Phi_k$ vanishes identically on $D_k$ for all $k=1,2,\dots;$

$\bullet$ $u$ coincides almost everywhere in $D$ with a solution of equation
\eqref{H1}.
\end{theorem}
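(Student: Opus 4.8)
The plan is to prove the two implications separately; the forward one is immediate and the converse carries the whole argument. Suppose first that $u$ coincides almost everywhere with a solution $w$ of \eqref{H1}. For every ball with $\overline{B_{j_{m/2,k}}(x)} \subset D$ the volume mean value property for metaharmonic functions — obtained by integrating the spherical means \eqref{Ah} over the radius, equivalently \eqref{Rh} with $\lambda=1$ — gives $\int_{|y| < j_{m/2,k}} w(x+y)\,\D y = (2\pi j_{m/2,k})^{m/2} J_{m/2}(j_{m/2,k})\,w(x) = 0$, since $j_{m/2,k}$ is by definition a zero of $J_{m/2}$. Hence $\Phi_k$ vanishes identically on $D_k$ for every $k$.

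For the converse I would first reduce to a smooth $u$: convolving with a radial mollifier commutes with the averaging operators $u \mapsto \Phi_k$ on interior subdomains, so the hypothesis $\Phi_k \equiv 0$ is inherited by the mollified functions, and it suffices to treat smooth $u$ and then let the mollification parameter tend to zero. The core is worked out on the half-space $H \subset D$ furnished by property~(a). The slab integrability hypothesis \eqref{last'} guarantees that for almost every $x_m$ the slice $u(\cdot,x_m)$ belongs to $L^1(\RR^{m-1})$, so the tangential partial Fourier transform $\hat{u}(\xi',x_m)$ in the variables $x'=(x_1,\dots,x_{m-1})$ is well defined. Slicing the ball of radius $R=j_{m/2,k}$ and transforming in $x'$ turns each condition $\Phi_k \equiv 0$ into
\[
\int_{-R}^{R} \hat{u}(\xi', x_m + t)\,(R^2 - t^2)^{(m-1)/4}\,J_{(m-1)/2}\bigl(|\xi'|\sqrt{R^2 - t^2}\bigr)\,\D t = 0 ,
\]
valid for almost every $\xi'$ and all admissible $x_m$, the kernel being (up to a constant) the tangential transform of the indicator of the ball.

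The next step is to read off the equation. For fixed $\xi'$ this is a one-dimensional convolution identity in $x_m$, and imposing it simultaneously for all radii $R=j_{m/2,k}$ pins down the admissible frequencies by the same common-zero mechanism that underlies Theorem~4.9: the only way all these Bessel kernels can annihilate $\hat{u}(\xi',\cdot)$ is for it to satisfy $\partial_{x_m}^2 \hat{u} + (1-|\xi'|^2)\,\hat{u} = 0$. Inverting the partial transform shows that $u$ solves $\nabla^2 u + u = 0$ throughout $H$. Finally I would propagate this conclusion to all of $D$. Since every solution of \eqref{H1} is real analytic, and by property~(b) every point of $D$ lies in a ball of radius $j_{m/2,1}$ centred at an admissible point $x_*$ whose closure is contained in $D$, while by property~(c) the set of such centres is connected, one moves the centre continuously from $H$ into $D$ and uses the injectivity of the mean value operator supplied by Theorem~4.8 together with unique continuation to extend the solution across $D$ and identify $u$ with it almost everywhere.

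The main obstacle is this last step. The one-dimensional common-zero analysis that produces the ODE is technical but parallels Theorem~4.9. The genuine difficulty — precisely the one flagged before the statement — is that on a domain $D \neq \RR^m$ solutions of \eqref{H1} are no longer translation invariant, so the half-space conclusion cannot simply be transported by translation; controlling the analytic continuation along the connected family of admissible centres, where the tangential Fourier machinery available on $H$ is no longer at hand, is where the real work of \cite{O} must lie.
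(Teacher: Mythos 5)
Your forward implication is fine: integrating the spherical mean \eqref{Ah} (with $\lambda=1$) over the radius gives the local volume identity \eqref{Rh} for every ball whose closure lies in $D$, and since $J_{m/2}(j_{m/2,k})=0$ the ball integral vanishes, so $\Phi_k\equiv 0$ on $D_k$. But note that the paper itself contains no proof of this theorem to compare against --- it is a survey item, stated with a citation to Ochakovskaya \cite{O}, Sect.~3 --- so your converse direction, which carries the entire content, must stand on its own, and it does not: it is a sketch with two load-bearing steps missing. The first is the passage from the family of conditions $\Phi_k\equiv 0$ to the ODE $\partial_{x_m}^2\hat u+(1-|\xi'|^2)\hat u=0$. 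You assert this is ``the only way all these Bessel kernels can annihilate $\hat u(\xi',\cdot)$'' and that the argument ``parallels Theorem~4.9''. Theorem~4.9, however, lives on $\RR^m$, where each identity holds at every point and whole-line spectral synthesis for mean-periodic functions is available. On the half-space the $k$th identity is a one-sided convolution equation valid only on its own half-line $x_m>j_{m/2,k}$, and since $j_{m/2,k}\to\infty$, at any fixed height only finitely many of the conditions apply; the conditions are never simultaneously available on a common region. This staggered, one-sided structure is exactly why the paper says solutions on unbounded domains ``inevitably loose (at least partly) the translation invariance'' and that proofs in this setting ``require essentially new methods'': your central step appeals to the very mechanism that is unavailable here.

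The second gap you concede yourself: the propagation from $H$ to all of $D$ via properties (b) and (c). Knowing that $u$ solves \eqref{H1} on $H$ and that the mean of $u$ over each ball $B_{j_{m/2,1}}(x_*)$ along a connected family of admissible centres vanishes does not determine $u$ on the portion of such a ball lying outside $H$: vanishing of a ball mean is one scalar constraint, while you must recover $u$ on an open set and verify the equation there. Neither tool you invoke exists in the needed form --- ``injectivity of the mean value operator supplied by Theorem~4.8'' is not a local statement (Theorem~4.8 concerns functions on all of $\RR^m$ satisfying \eqref{Rh} for every $r>0$), and unique continuation applies to solutions, which is precisely what $u$ is not yet known to be on $D\setminus H$. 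Your closing remark that this ``is where the real work of \cite{O} must lie'' is accurate, but it is an admission that what you have written is a plan, not a proof; moreover, the paper points out that the theorem fails if either (b) or (c) is dropped, so any genuine argument must use both hypotheses essentially, and nothing in your sketch does.
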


\noindent From the proof given in \cite{O}, Sect.~3, it follows that both conditions
(b) and (c) are required to describe the class $\mathcal{O}$ used in this theorem.
Indeed, the assertion is not true if either of these conditions is omitted; see
considerations at the end of the cited section. The next theorem demonstrates in
what sense \eqref{last'} is essential for guaranteeing that equation \eqref{H1} holds
along with vanishing of all $\Phi_k$, $k=1,2,\dots$.

\begin{theorem}
For every $\delta > 0$ there exists a sequence $\{ u_k \} \subset C^\infty (H)$ such
that $u_i$ and~$u_j$ are not equal identically for $i \neq j$, and the following
properties are fulfilled simultaneously:

$\bullet$  $\int_H |u_k (x)| \, \E^{\delta x_m} \, \D x < + \infty$ for all
$k=1,2,\dots ;$

$\bullet$ each function $\int_{|y| < j_{m/2, k}} \!\! u_k (x+y) \, \D y$ vanishes
identically on the corresponding subset of $H ;$

$\bullet$ for every $k=1,2,\dots$ the function $u_k$ does not satisfy equation
\eqref{H1} in $H$.
\end{theorem}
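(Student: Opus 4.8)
The plan is to realise each $u_k$ as a solution of a \emph{shifted} Helmholtz equation $\nabla^2 u_k + c_k^2 u_k = 0$ with a wave number $c_k \neq 1$ chosen so that the average over balls of the critical radius $j_{m/2,k}$ is forced to vanish, while the deviation $c_k \neq 1$ prevents $u_k$ from solving \eqref{H1}. The mechanism is the scaled form of Theorem~4.8: applying the identity \eqref{Rh} to the metaharmonic function $x \mapsto u_k(x/c_k)$ and changing variables gives
\begin{equation*}
\int_{|y| < r} u_k(x+y)\,\D y = c_k^{-m}\,(2\pi c_k r)^{m/2}\,J_{m/2}(c_k r)\,u_k(x)
\end{equation*}
for every ball $B_r(x)$ in the domain of $u_k$. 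Hence it suffices to pick $c_k$ so that $c_k\, j_{m/2,k}$ is again a zero of $J_{m/2}$: choosing an index $l = l(k) \neq k$ and setting $c_k = j_{m/2,l}/j_{m/2,k}$, the right-hand side vanishes at $r = j_{m/2,k}$, so $\int_{|y|<j_{m/2,k}} u_k(x+y)\,\D y = 0$ wherever $B_{j_{m/2,k}}(x) \subset H$, i.e. on the subset $\{x_m > j_{m/2,k}\}$. Since $c_k \neq 1$, any $u_k \not\equiv 0$ with $\nabla^2 u_k = -c_k^2 u_k$ cannot also satisfy $\nabla^2 u_k = -u_k$, so $u_k$ fails \eqref{H1}; taking the indices $l(k)$ so that the numbers $c_k$ are pairwise distinct makes the $u_k$ pairwise distinct as well. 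This is exactly the configuration that \eqref{last'} in Theorem~4.11 excludes.

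It remains to produce, for a given $\delta > 0$, such a solution lying in $C^\infty(H)$ and integrable against the weight $\E^{\delta x_m}$. Writing $x = (x', x_m)$ with $x' \in \RR^{m-1}$, I would build $u_k$ by Fourier synthesis in the tangential variables out of the elementary solutions $\E^{\ii\xi'\cdot x'}\E^{-\gamma_k(\xi')x_m}$, where $\gamma_k(\xi') = (|\xi'|^2 - c_k^2)^{1/2}$, namely
\begin{equation*}
u_k(x) = \int_{\RR^{m-1}} g_k(\xi')\,\E^{\ii\xi'\cdot x'}\,\E^{-\gamma_k(\xi')\,x_m}\,\D\xi' .
\end{equation*}
Differentiating under the integral sign shows $\nabla^2 u_k + c_k^2 u_k = 0$ in $H$ (indeed on all of $\RR^m$). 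The weight $\E^{\delta x_m}$ dictates the choice of $g_k$: I would take $g_k \in C_c^\infty(\RR^{m-1})$, $g_k \not\equiv 0$, supported in the annulus $\{a_k \leq |\xi'| \leq b_k\}$ with $a_k > (c_k^2 + \delta^2)^{1/2}$, so that $\gamma_k(\xi') \geq \gamma_{k,0} := (a_k^2 - c_k^2)^{1/2} > \delta$ on the support; imposing $g_k(-\xi') = \overline{g_k(\xi')}$ renders $u_k$ real-valued, if desired.

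The only genuine work is the integrability estimate, which splits over the two groups of variables. In $x_m$ the uniform bound $|u_k(x', x_m)| \leq \|g_k\|_{L^1}\,\E^{-\gamma_{k,0} x_m}$ is immediate. Decay in $x'$ comes from smoothness of the integrand: since $\nabla_{\xi'}\gamma_k = \xi'/\gamma_k$ is bounded on the support, each $\xi'$-derivative of $\E^{-\gamma_k x_m}$ costs at most a factor $C(1 + x_m)$, so applying $(-\Delta_{\xi'})^N$ and integrating by parts $N$ times gives
\begin{equation*}
|x'|^{2N}\,|u_k(x', x_m)| \leq C_N\,(1 + x_m)^{2N}\,\E^{-\gamma_{k,0} x_m}.
\end{equation*}
Combining the two bounds and fixing $2N > m-1$ yields $\int_{\RR^{m-1}} |u_k(x', x_m)|\,\D x' \leq C(1 + x_m)^{2N}\E^{-\gamma_{k,0}x_m}$, whence
\begin{equation*}
\int_H |u_k(x)|\,\E^{\delta x_m}\,\D x \leq C\int_0^\infty (1 + x_m)^{2N}\,\E^{-(\gamma_{k,0}-\delta)x_m}\,\D x_m < +\infty
\end{equation*}
because $\gamma_{k,0} > \delta$. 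I expect the main obstacle to be precisely this balancing of the tangential decay, governed by the smoothness of $g_k$, against the prescribed exponential weight $\E^{\delta x_m}$; everything else reduces to the scaling identity above together with elementary differentiation under the integral sign.
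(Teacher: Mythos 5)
Your construction is correct, but there is nothing in the paper to compare it against: this statement (Theorem 4.12) is quoted in the survey without proof from Ochakovskaya's paper \cite{O}, where it serves as the sharpness counterpart of Theorem 4.11. Judged on its own merits, your argument works. The scaled identity is right: it is the volume mean-value property for the Helmholtz equation with wavenumber $c_k$, and since $c_k^{-m}(2\pi c_k r)^{m/2}=(2\pi r/c_k)^{m/2}$ it can be verified directly from $\int_{|y|<r}\E^{\ii\zeta\cdot y}\,\D y=(2\pi r/c_k)^{m/2}J_{m/2}(c_k r)$ for $\zeta\cdot\zeta=c_k^2$, which applies to every evanescent mode $\E^{\ii\xi'\cdot x'}\E^{-\gamma_k(\xi')x_m}$ in your synthesis. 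The choice $c_k=j_{m/2,l(k)}/j_{m/2,k}$ then kills the mean over balls of radius $j_{m/2,k}$, the integration-by-parts estimate in $x'$ is standard and correctly balanced against the weight $\E^{\delta x_m}$ via $\gamma_{k,0}>\delta$, and nontriviality plus failure of \eqref{H1} follow because a function $\not\equiv 0$ cannot satisfy two Helmholtz equations with distinct wavenumbers. Three small repairs are worth making. First, when invoking \eqref{Rh} (Theorem 4.8) for $x\mapsto u_k(x/c_k)$ you should state explicitly that your Fourier integral defines $u_k$ as a solution on all of $\RR^m$, not merely on $H$ (it does, since the $\xi'$-support of $g_k$ is compact and avoids $|\xi'|=c_k$), so the global hypothesis of Theorem 4.8 is met. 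Second, pairwise distinctness of the $c_k$ needs a word: for fixed $k$ the ratios $j_{m/2,l}/j_{m/2,k}$, $l\neq k$, form an infinite set, so one can inductively choose $l(k)$ avoiding the finitely many values already used. Third, your closing remark that this is ``exactly the configuration that \eqref{last'} in Theorem~4.11 excludes'' is misleading: your functions $u_k$ in fact \emph{satisfy} \eqref{last'}, since integrability against $\E^{\delta x_m}$ over $H$ implies integrability over every slab $\{\alpha<x_m<\beta\}$; there is no tension with Theorem 4.11 because for each $u_k$ only the single mean $\Phi_k$ vanishes, not the whole sequence $\Phi_j$, $j=1,2,\dots$. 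Indeed the real content of the theorem, which your construction captures, is that the full sequence of radii in Theorem 4.11 cannot be thinned to one radius per function, no matter how strong an exponential decay is imposed.
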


\noindent Thus, these theorems provide a definitive result in the case of a
half-space.

\vskip7pt {\bf 4.6. The mean value property over discs in $\mathbf{\RR^2}$ and
metaharmonic functions.} According to the approach developed by Chamberland in his
note \cite{Cham}, the two-dimensional mean value formula\\[-3mm]
\begin{equation}
u (x) = \frac{1}{\pi R^{2}} \int_{B_R (x)} u (y) \, \D y , \quad x \in \RR^2 ,
\label{cha}
\end{equation}
where a constant $R > 0$ is fixed, is considered as equation for unknown $u \in C^0
(\RR^2)$. It is clear that the space of these solutions is translation-invariant,
rotation-invariant and closed in the usual topology. It occurs that these properties
imply the following.

\begin{theorem}
Every $u \in C^0 (\RR^2)$ satisfying \eqref{cha} belongs to the closed linear span
of the solutions of equation \eqref{Hh}, where $\lambda R = 2 J_1 (\lambda R)$ and
$J_1$ is the Bessel function of order one.
\end{theorem}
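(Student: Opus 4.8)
The plan is to recast \eqref{cha} as a convolution equation and then appeal to spectral synthesis. Writing $\sigma_R$ for the normalized area measure on the disc $B_R(\mathbf{0})$, that is $\D\sigma_R = (\pi R^2)^{-1}\mathbf{1}_{B_R(\mathbf{0})}\,\D y$, equation \eqref{cha} reads $u = u*\sigma_R$, or equivalently $u*(\delta - \sigma_R) = 0$, where $\delta$ is the Dirac mass at the origin. The solution space $V \subset C^0(\RR^2)$ is then a closed, translation-invariant subspace (both being immediate from the convolution form), and it is rotation-invariant because $\sigma_R$ is radial, so the averaging operator $u \mapsto u*\sigma_R$ commutes with the rotations of $\RR^2$. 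These are precisely the three structural properties noted before the theorem.

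Next I would locate the exponential solutions contained in $V$. A character $x \mapsto \E^{\ii\xi\cdot x}$ satisfies \eqref{cha} if and only if $\hat\sigma_R(\xi) = 1$, and a direct computation gives $\hat\sigma_R(\xi) = 2J_1(R|\xi|)/(R|\xi|)$. Hence such a character lies in $V$ exactly when $R|\xi| = 2J_1(R|\xi|)$, that is when $|\xi| = \lambda$ with $\lambda R = 2J_1(\lambda R)$, the very relation in the statement. Moreover $\nabla^2\E^{\ii\xi\cdot x} = -|\xi|^2\E^{\ii\xi\cdot x}$, so each such character solves \eqref{Hh} with $\lambda^2 = |\xi|^2$. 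Averaging these characters over a circle $|\xi| = \lambda$ recovers the radial metaharmonic solution $J_0(\lambda|x|)$ and, more generally, $J_n(\lambda|x|)\E^{\ii n\theta}$, all of which solve \eqref{Hh} for the admissible values of $\lambda$.

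Finally I would invoke spectral synthesis: for a closed subspace of $C^0(\RR^2)$ invariant under both translations and rotations, $V$ coincides with the closed linear span of the exponential solutions it contains (this is the rotation-invariant synthesis theorem of Brown, Schreiber and Taylor). Since each such exponential solves \eqref{Hh} for some $\lambda$ with $\lambda R = 2J_1(\lambda R)$, every $u \in V$ lies in the closed linear span of solutions of \eqref{Hh}, which is the assertion.

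The delicate point is the synthesis step. In $\RR^m$ with $m \geq 2$ synthesis fails for general closed translation-invariant subspaces, so the rotation invariance supplied by the radiality of $\sigma_R$ is indispensable: it reduces the problem, via the angular decomposition $u(r,\theta) = \sum_n u_n(r)\,\E^{\ii n\theta}$, to a one-dimensional situation in which the classical synthesis theorem of L.~Schwartz applies modewise. A secondary point is the multiplicity of the zeros of the symbol $1 - 2J_1(\lambda R)/(\lambda R)$: at $\lambda = 0$ it vanishes to second order, which accounts for the constants and linear functions — indeed all harmonic functions, by Theorem~1.2 — lying in $V$, these being exactly the solutions of \eqref{Hh} with $\lambda = 0$; for the nonzero zeros one should check simplicity, so that only genuine solutions of \eqref{Hh}, rather than generalized ones, enter the span.
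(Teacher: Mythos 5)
Your overall route --- rewriting \eqref{cha} as $u*(\delta-\sigma_R)=0$ and invoking spectral synthesis for closed, translation- and rotation-invariant subspaces of $C^0(\RR^2)$ --- is exactly the route the paper indicates (the survey gives no details at all, saying only that the assertion ``is proved by using the spectral synthesis technique,'' following Chamberland's note). However, your execution has a genuine gap: you feed the synthesis step only \emph{bounded} characters $\E^{\ii \xi \cdot x}$ with $\xi \in \RR^2$. The scalar equation $z = 2 J_1(z)$ has \emph{no nonzero real roots}: for real $z>0$ one has $z - 2J_1(z) = z^3/8 - z^5/192 + \cdots > 0$ (alternating series with decreasing terms for moderate $z$, while $2|J_1(z)| \leq 1.17 < z$ for larger $z$), and the function $z - 2J_1(z)$ is odd. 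Hence the only real frequency your computation produces is $\xi = \mathbf{0}$; together with the polynomials attached to that double zero of the symbol, the real-character spectrum of your space $V$ consists of harmonic polynomials only, whose closed span in the compact-open topology is precisely the space of harmonic functions on $\RR^2$. That is strictly smaller than $V$: for any \emph{complex} root $\lambda$ of $\lambda R = 2J_1(\lambda R)$, the function $J_0(\lambda |x|)$ satisfies \eqref{cha} (apply the two-dimensional mean value identity \eqref{Ah}, analytically continued in $\lambda$) and is not harmonic. So the claim ``$V$ coincides with the closed linear span of the exponential solutions it contains,'' read with real exponentials, is false --- indeed this very example exhibits the failure of real-character synthesis that you yourself warn about in your final paragraph, and your phrase ``averaging these characters over a circle $|\xi| = \lambda$'' averages over an empty set for every admissible $\lambda \neq 0$.

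The repair is to use the Brown--Schreiber--Taylor theorem in its actual form: a closed, translation- and rotation-invariant subspace of $C^0(\RR^2)$ is the closed span of the Helmholtz eigenspaces $E_\lambda = \{ w : \nabla^2 w + \lambda^2 w = 0 \}$, $\lambda \in \CC$, that it contains (equivalently, of exponential-polynomials with complex frequencies $\zeta \in \CC^2$, $\zeta \cdot \zeta = \lambda^2$, for which the entire extension of $\hat\sigma_R$ equals $1$). One then identifies the admissible $\lambda$ by the mean value property of metaharmonic functions --- the planar case of \eqref{We}/\eqref{Ah}, valid for complex $\lambda$ by analytic continuation --- which gives $E_\lambda \subset V$ if and only if $2 J_1(\lambda R) = \lambda R$. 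This is exactly why the paper states equation \eqref{Hh} with $\lambda \in \CC$: all nonzero admissible $\lambda$ here are non-real. Your closing observations (the double zero at $\lambda = 0$ accounting for the harmonic functions in $V$, and the need to check simplicity of the nonzero zeros so that no generalized eigenfunctions of $(\nabla^2 + \lambda^2)^2$ enter the span) are sound and survive this repair, but they concern complex zeros, not real ones; and your parenthetical sketch of how the synthesis theorem itself is proved (modewise reduction to Schwartz's one-dimensional theorem) is not accurate as stated, though that is incidental since you cite the theorem rather than prove it.
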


\noindent This assertion is proved by using the spectral synthesis technique.

\vspace{2mm}

{\bf Acknowledgement.} The author is indebted to O. V. Motygin for help with
bibliography.

\renewcommand{\refname}{
\begin{center}{\Large\bf References}
\end{center}}
\makeatletter
\renewcommand{\@biblabel}[1]{#1.\hfill}
\makeatother

\vspace{-10mm}

\end{document}